\documentclass{amsart}
\usepackage[top=25mm, bottom=25mm, left=25mm, right=25mm]{geometry}
\usepackage{graphicx}
\usepackage{amsmath}
\allowdisplaybreaks[4]
\usepackage{amssymb}
\usepackage{amsthm}
\usepackage{amsfonts}
\usepackage{bm}
\usepackage{mathtools}
\usepackage{empheq}
\usepackage{tikz}

\usepackage{enumitem}
\usepackage{booktabs}
\usepackage{dsfont}

\usepackage{longtable}
\usepackage{float}

\usepackage{mathrsfs}
\usepackage{stmaryrd}

\usepackage{url}

\let\oldsection\section
\renewcommand{\section}[1]{\oldsection{#1}\noindent\ignorespaces}

\newcommand{\sstirling}[2]{%
  \left\{\begin{matrix}#1\\#2\end{matrix}\right\}
}

\theoremstyle{definition}

\newtheorem{definition}{Definition}[section]
\newtheorem{proposition}[definition]{Proposition}
\newtheorem{lemma}[definition]{Lemma}
\newtheorem{theorem}[definition]{Theorem}
\newtheorem{corollary}[definition]{Corollary}
\newtheorem{remark}[definition]{Remark}

\newtheorem{assumption}[definition]{Assumption}
\newtheorem{deflemma}[definition]{Definition and Lemma}
\numberwithin{equation}{section}

\makeatletter
\def\subsection{\@startsection{subsection}{2}%
  \z@{-.8\linespacing\@plus.2\linespacing}%
  {.5\linespacing}%
  {\normalfont\normalsize\bfseries}}
\makeatother

\title{A Unified Framework for $p$-adic Congruences and $p$-adic interpolations of Sequences}
\author{Yuta Nishibuchi}
\date{}

\keywords{Congruence, Fibonacci numbers, Bernoulli numbers, $p$-adic analysis, $p$-adic integral}
\subjclass[2020]{Primary 11S80; Secondary 11B39, 11B50, 11D88, 13F25}
\address{Yuta Nishibuchi\\Mathematical Institute, Graduate School of Science, Tohoku University\\6-3, Aoba, Aramaki, Aoba-ku, Sendai 980-8578, Japan}
\email{nishibuchi.yuta.s2@dc.tohoku.ac.jp}
\thanks{ORCID: 0009-0007-1058-3819}

\begin{document}

\begin{abstract}
    In this paper, we treat the problem of determining when a sequence $\{c_n\}_{n=0}^{\infty}$ with the exponential generating function $F(t)$ satisfies the congruence $c_{n+\ell p^k}\equiv c_n \pmod{p^{k+r}}$ for large $n$ and when its canonical interpolating functions are locally analytic. This type of congruence is often observed for several number-theoretic or combinatorial sequences, such as Bernoulli numbers, Euler numbers, and Fibonacci numbers. Traditionally, such congruences are proved by using $p$-adic integrations.

    In this paper, we present another approach to this problem. Instead of using $p$-adic integrals, we investigate an algebraic structure of the set of generating functions and establish some conditions on $F(t)$ that are equivalent to or sufficient for the congruence and smoothness of the interpolating functions. We also provide concrete applications of our framework to Genocchi numbers, generalized Euler numbers, and so on, which are outside the scope of the typical $p$-adic integration method.
\end{abstract}

\maketitle

\tableofcontents

\section{Introduction}
The aims of this paper are, for a given prime $p$, a positive integer $\ell$ such that $p\nmid\ell$, and a real number $r$,
\begin{enumerate}
    \item to determine when a sequence $\{c_n\}_{n=0}^{\infty}\subset\mathbb{C}_p$ with the exponential generating function $F(t)$ satisfies the congruence 
    \begin{equation}
    \label{Basic_equ}
        c_{n+\ell p^k}\equiv c_n\mod{p^{k+r}}\quad\text{for large }n~(\text{for all }n\ge n_k).
    \end{equation}
    More precisely, when $\{c_n\}$ satisfies the following:
    \[\text{ for any }k\ge0,\text{ there is a }n_k\ge0\text{ such that }v_p(c_{n+\ell p^k}-c_n)\ge{k+r}\text{ for all }n\ge n_k\]
    where $v_p$ is the additive valuation on $\mathbb{C}_p$ normalized by $v_p(p)=1$.
    \item to provide a canonical construction of a sequence $\{c'_n\}$ satisfying
    \begin{equation}
    \label{Basic_equ_2}
        c'_{n+\ell p^k}\equiv c'_n\mod{p^{k+r}}\quad\text{for all }k,n\ge0.
    \end{equation}
    from a sequence $\{c_n\}$ satisfying \eqref{Basic_equ}.
    \item to determine the smoothness of $p$-adic interpolating functions
    \begin{equation}
    \label{Basic_equ_3}
        f_i:\mathbb{Z}_p\rightarrow\mathbb{C}_p;n\rightarrow c'_{i+\ell n}\quad(i=0,1,\cdots,\ell-1)
    \end{equation}
    for $\{c'_n\}$ satisfying \eqref{Basic_equ_2}. (Note that if \eqref{Basic_equ_2} holds, $n\mapsto c_{i+\ell n}$ extends continuously to $\mathbb{Z}_p$.)
\end{enumerate}

Congruence \eqref{Basic_equ} often occurs in several number-theoretic or combinatorial sequences. One of the most famous ones is Kummer's congruence for Bernoulli numbers:
\[(1-p^{m-1})\frac{B_m}{m}\equiv(1-p^{n-1})\frac{B_n}{n}\mod{p^{k+1}}\quad\text{if }m\equiv n\mod{(p-1)p^k}\text{ and }p-1\nmid m,n.\]
Fibonacci numbers satisfy
\[F_{n+\ell p^k}\equiv F_n\mod{p^{k+1}}\quad
\left(\ell=
\begin{cases}
    3 & (\text{if }p=2) \\
    20 & (\text{if }p=5) \\
    p-1 & (\text{if }p\equiv1,4\mod{5}) \\
    2(p+1) & (\text{if }p\equiv2,3\mod{5},~p\ne2)
\end{cases}
\right)\quad\text{for all }k,n\ge0.
\]
Other examples satisfying \eqref{Basic_equ} include, among many others, Fubini numbers, Genocchi numbers, and derangement numbers.

Traditionally, congruence \eqref{Basic_equ} with $\ell=p-1$ and $r=1$ has been proven using $p$-adic integration. For example, in \cite{Young}, Young showed a general result of congruences obtained by $p$-adic integration and its application to concrete sequences. In many literatures---including in Young's work---, to prove periodicity in modulo $p^k$ of sequences $\{c_n\}_{n=0}^{\infty}$ by using $p$-adic integration, one typically uses a $\mathcal{O}_{\mathbb{C}_p}$-valued $p$-adic measure $\mu$ on $\mathbb{Z}_p$ such that
\begin{equation}
\label{Basic_measure_equ}
    \int_{\mathbb{Z}_p}x^nd\mu(x)=c_n.
\end{equation}
If such $\mu$ exists, $\{c_n\}$ satisfies \eqref{Basic_equ} with $\ell=p-1,~r=1$. The Kummer congruence of Bernoulli numbers can also be proven by using measure $\mu_{\alpha}$ such that
\[\int_{\mathbb{Z}_p}x^nd\mu_{\alpha}(x)=(1-\alpha^{-(n+1)})(1-p^n)\frac{B_{n+1}}{n+1}\]
with any integer $\alpha$ such that $\alpha\ne1$ and $\alpha\not\equiv 0\mod{p}$.
\footnote{For example, see \cite[pp.~36--45]{Koblitz}.}

However, one cannot always choose $\mu$ to satisfy \eqref{Basic_measure_equ}. Indeed, in this paper, we show the existence of sequences $\{c_n\}$ with no corresponding measure $\mu$, but satisfying \eqref{Basic_equ} with $\ell=p-1,~r=1$.

In this paper, we present another perspective on congruences \eqref{Basic_equ} by investigating the algebraic structures of sets of generating functions. This perspective provides necessary and sufficient conditions for \eqref{Basic_equ}, \eqref{Basic_equ_2}, and local analyticity of functions \eqref{Basic_equ_3} with general $\ell$ and $r$. In the remainder of this section, we give an overview of the main results of this paper.

\subsection{Notation and Main Concepts}
In this paper, $p$ is a prime number, and we denote $\mathbb{Z}_p$ as the ring of $p$-adic integers, $\mathbb{Q}_p$ as its fractional field. We fix an algebraic closure of $\mathbb{Q}_p$ and denote by $\mathbb{C}_p$ its completion. Let $\mathcal{O}_{\mathbb{C}_p}$ be the valuation ring of $\mathbb{C}_p$. Let $v_p$ be the additive valuation on $\mathbb{C}_p$ normalized by $v_p(p)=1$, and $|\cdot|_p=p^{-v_p(\cdot)}$ be the normalized norm on $\mathbb{C}_p$. For any $a,b\in\mathbb{C}_p$ and $R\in\mathbb{R}$, 
\[a\equiv b\mod{p^R}\Longleftrightarrow v_p(a-b)\ge R.\]

Furthermore, $\ell$ is assumed to be a positive integer, and $h$ denotes the minimum positive integer such that $\ell\mid p^h-1$. Let $K_h$ denote the unramified extension of $\mathbb{Q}_p$ of degree $h$ with the ring of integers $\mathcal{O}_{K_h}=\mathcal{O}_h$ and the maximal ideal $\mathfrak{m}_{h}$. Note that $K_h=\mathbb{Q}_p(\zeta_\ell)$ where $\zeta_\ell$ is a primitive $\ell$-th root of unity. We set
\begin{equation}
\label{q_def}
    q=p^h
\end{equation}
for notational convenience.
We write $\mathbb{Z}_{\ge0}$ for the set of nonnegative integers.

For each $s\in\mathbb{Z}_{\ge0}$, we say that \(f:\mathbb{Z}_p\rightarrow\mathbb{C}_p\) is locally analytic of order \(s\) if, for every \(a\in\mathbb Z_p\), \(f\) admits a power-series expansion in \(x-a\) that converges on \(a+p^s\mathbb Z_p\).

For each $r\in\mathbb{R}$ and a complete subring $R\subset\mathbb{C}_p$, let
\begin{equation}
\label{M_r(R)_def}
    M_{p,\ell,r}(R)=M_r(R)\coloneqq\Bigl\{\{c_n\}_{n=0}^{\infty}\subset R~ \Bigm|~\{c_n\}\text{ satisfies \eqref{Basic_equ}}\Bigr\},
\end{equation}
\begin{equation}
\label{R_r(R)_def}
    \mathcal{M}_{p,\ell,r}(R)=\mathcal{M}_r(R)\coloneqq\left\{\sum_{n=0}^{\infty}c_n\frac{t^n}{n!}\in R\llbracket t\rrbracket~\middle|~\{c_n\}\text{ satisfies \eqref{Basic_equ}}\right\},
\end{equation}
and for each $s\in\mathbb{Z}_{\ge0}$, let
\begin{equation}
\label{R_r,s^la(R)_def}
    \mathcal{M}_{p,\ell,r,s}^{la}(R)=\mathcal{M}_{r,s}^{la}(R)\coloneqq\left\{\sum_{n=0}^{\infty}c_n\frac{t^n}{n!}\in R\llbracket t\rrbracket~\middle|~
    \begin{array}{c}
        \{c_n\}\text{ satisfies \eqref{Basic_equ_2} and }
        f_0,\cdots,f_{\ell-1}\\\text{ given by \eqref{Basic_equ_3} are locally analytic of order }s
    \end{array}
    \right\}.
\end{equation}

\subsection{An overview}
For clarity, we first present in Figure~\ref{overview_figure} a schematic overview of the relationships among the objects considered in this paper. We explain each part of the figure, including their definitions, in the subsequent subsections.
\begin{figure}[htbp]
\centering
\begin{tikzpicture}[
    >=stealth,
    every node/.style={font=\Large}
]

\node (M) at (0,2.5)
    {$\displaystyle
    M_r(R)
    \qquad = \qquad
    \bigoplus_{\alpha^{\ell}=1} M_r^{(\alpha)}(R)
    \qquad \oplus \qquad
    N(R)$};

\node (R) at (0,-0.5)
    {$\displaystyle
    \mathcal{M}_r(R)
    \qquad = \qquad
    \bigoplus_{\alpha^{\ell}=1} e^{\alpha t}\mathcal{A}_r(R)
    \qquad \oplus \qquad
    \mathcal{B}(R)$};

\node (L) at (-1.55,-2.5)
    {$\displaystyle
    \mathcal{M}_{r,s}^{la}(R)
    \qquad = \qquad
    \bigoplus_{\alpha^{\ell}=1} e^{\alpha t}\mathcal{A}_{r,s}(R)$};

\node (F) at (-1.65,-5.0)
    {$\displaystyle
    \left\{\sum_{n=0}^{\infty}c_n\frac{t^n}{n!}~\middle|~
    \int_{\mathbb{Z}_p}x^nd\mu(x)=c_n~(\exists\mu)\right\}
    ~\subset~
    \bigoplus_{\alpha^{p-1}=1} e^{\alpha t}\mathcal{A}'
    \quad \oplus \quad
    \mathcal{A}'$};

\draw[dotted]
    (-7.8,1.15) -- (4.5,1.15);

\node[font=\large] at (-6.0,1.60)
    {Sequences};

\node[font=\large] at (-6.0,0.70)
    {Generating functions};

\draw[<->]
    (-3.8,1.8) -- (-3.8,0.30);

\draw[<->]
    (-0.1,1.8) -- (-0.1,0.30);

\draw[<->]
    (3.8,1.8) -- (3.8,0.30);

\draw[<->]
    (0.9,1.45) -- (0.9,0.85);

\node at (0.9,1.8)
    {\small $M_r^{(\alpha)}(R)$};

\node at (0.9,0.5)
    {\small $e^{\alpha t}\mathcal{A}_r(R)$};

\draw[-]
    (-0.1,-1.2) -- (-0.1,-1.9);

\node at (1.55,-1.50)
    {\small$\displaystyle \mathcal{A}_{r,s}(R)\subset\mathcal{A}_r(R)$};

\draw[-]
    (-0.1,-3.35) -- (-0.1,-4.35);

\node[align=center] at (1.75,-3.85)
    {\small $\displaystyle \mathcal{A}'\subseteq \mathcal{A}_{r,s}(R)\cap \mathcal{B}(R)$\\ \small if $r\le1$, $p-1\mid\ell$,\\ \small and $p+s\ge3$};

\draw[-]
    (3.8,-1.0) -- (3.8,-4.35);

\end{tikzpicture}
\caption{Schematic overview, $R=\mathbb{C}_p$ or $\mathcal{O}_{\mathbb{C}_p}$.}
\label{overview_figure}
\end{figure}

We only treat the case of $R=\mathbb{C}_p$ or $R=\mathcal{O}_{\mathbb{C}_p}$ in the remainder of this section.
This is because, in applications, there is little benefit to be gained from narrowing the set to which sequences belong, but much of the argument may remain valid for a general complete subring, with minor modifications. We consider the case of $R=\mathcal{O}_{\mathbb{C}_p}$ because it yields more convenient results when $R$ does not contain any $x$ with $v_p(x)<0$.

Certain symbols $\mathcal{A}_r(R),\mathcal{A}_{r,s}(R),\mathcal{A}',\text{ and }\mathcal{B}(R)$ in Figure 1 are temporary and will not be used after they have been specifically described below.

\subsection{Structures of sets of sequences}
The set $M_r(R)$ naturally has the structure of an $R$-module, with
\begin{align*}
    \{c_n\}+\{d_n\}&=\{c_n+d_n\}\quad(\{c_n\},\{d_n\}\in M_r(R))\quad\text{and}\\
    x\{c_n\}&=\{xc_n\}\quad(x\in R, \{c_n\}\in M_r(R)).
\end{align*}

In section~\ref{Decomposition_of_sequences_subsection}, we show the direct sum decomposition of an $R$-module
\begin{equation}
\label{Decomposition_of_M_r(R)_equ}
    M_r(R)=\left(\bigoplus_{\alpha^{\ell}=1}M_r^{(\alpha)}(R)\right)\oplus N(R),
\end{equation}
where
\begin{equation}
\label{M_r^{(alpha)}(A)_def}
    M_r^{(\alpha)}(R)=\left\{\{c_n\}_{n=0}^{\infty}\subset R~ \middle|~
    c_{n+p^k}\equiv \alpha^{p^k}c_n\mod{p^{k+r}}\text{ for all }k,n\in\mathbb{Z}_{\ge0}
    \right\} 
\end{equation}
for each $\alpha\in R$, and 
\begin{equation}
\label{N(R)_def}
    N(R)=\left\{\{d_n\}_{n=0}^{\infty}\subset R~ \middle|~
    v_p(d_n)\rightarrow\infty~(n\rightarrow\infty)
    \right\}.
\end{equation}
In the proof of that, the following is also demonstrated:
\begin{enumerate}
    \item We have\[M_r'(R)\coloneqq\Bigl\{\{c'_n\}_{n=0}^{\infty}\subset R~\Bigm|~\{c'_n\}\text{ satisfies \eqref{Basic_equ_2}}\Bigr\}=\bigoplus_{\alpha^{\ell}=1}M_r^{(\alpha)}(R).\]
    \item The projection $M_r(R)\rightarrow M_r'(R)$ is given by \[\{c_n\}\mapsto\left\{\lim_{k\rightarrow\infty}c_{n+\ell p^k}\right\}.\]
    \item The minimum possible value for $n_k$ in \eqref{Basic_equ} corresponds to the rate at which $v_p(d_n)\rightarrow\infty$, where $\{d_n\}$ is the $N(R)$-component of $\{c_n\}\in M_r(R)$.
\end{enumerate}
We can consider that the canonical map $M_r(R)\rightarrow M'_r(R)$ gives a canonical construction of $\{c'_n\}$ satisfying \eqref{Basic_equ_2} from $\{c_n\}$ satisfying \eqref{Basic_equ}.

\subsection{Reformulation by generating functions}
Using the Mahler expansion of $p$-adic continuous functions, we reformulate the decomposition \eqref{Decomposition_of_M_r(R)_equ} in the class of generating functions.

In section~\ref{Reformulation_via_power_series_subsection}, we show one-to-one correspondences between
\begin{align*}
    M_r^{(\alpha)}(\mathbb{C}_p)&\longleftrightarrow e^{\alpha t}\mathbb{C}_p\langle t|\ge\lfloor\log_pn\rfloor+r\rangle,\\
    M_r^{(\alpha)}(\mathcal{O}_{\mathbb{C}_p})&\longleftrightarrow e^{\alpha t}\mathbb{C}_p\langle t|\ge\Theta_r\rangle,\\
    \{c_n\}&\longmapsto\sum_{n=0}^{\infty}c_n\frac{t^n}{n!}
\end{align*}
where $\lfloor\cdot\rfloor$ is the floor function,
\begin{equation}
\label{C_p<t|ge_theta>_def}
    \mathbb{C}_p\langle t|\ge\Theta\rangle\coloneqq\left\{\sum_{n=0}^{\infty}b_n\frac{t^n}{n!}\in\mathbb{C}_p\llbracket t\rrbracket\text{ }\middle|\text{ }v_p(b_n)\ge\Theta(n)\text{ for all }n\ge0\right\}
\end{equation}
for each function $\Theta:\mathbb{Z}_{\ge0}\rightarrow\mathbb{R}\cup\{\pm\infty\}$, and 
\begin{equation}
\label{Theta_r_def}
    \Theta_r(n)=\begin{cases}
        0 & (\text{if }n=0), \\
        \lfloor\log_pn\rfloor+r & (\text{if }n\ge1)
    \end{cases}
\end{equation}
for each $r\in\mathbb{R}$. That is, $\mathcal{A}_r(R)$ in Figure~\ref{overview_figure} is $\mathbb{C}_p\langle t|\ge\lfloor\log_pn\rfloor+r\rangle$ and $\mathbb{C}_p\langle t|\ge\Theta_r\rangle$, respectively.

A main idea of the proof of this correspondence is the fact that the Mahler coefficients of $p$-adic continuous function $f$ provide information about the smoothness\footnote{Indeed, Mahler coefficients of $f$ provide information about, for example, local analyticity, Lipschitz continuity, and whether $f$ is a $C^n$ function (see Definition~29.1 in \cite{Schikhof} for the precise definition of $C^n$ functions).} of $f$. We rephrase congruences as Lipschitz continuity of the interpolating functions and then further rephrase it in terms of the rate of increase of the valuation of the coefficients.

The difference between $\mathbb{C}_p\langle t|\ge\lfloor\log_pn\rfloor+r\rangle$ and $\mathbb{C}_p\langle t|\ge\Theta_r\rangle$ is whether the valuations of the constant term of their elements are restricted or not. This distinction affects their algebraic structures. For example, $p^{-2}, p^{\lceil r\rceil}t\in\mathbb{C}_p\langle t|\ge\lfloor\log_pn\rfloor+r\rangle$, but their product $p^{\lceil r\rceil-2}t$ is not contained, while it is revealed that $\mathbb{C}_p\langle t|\ge\Theta_r\rangle$ is a subring of $\mathbb{C}_p\llbracket t\rrbracket$ if $r\ge0$ in Section~\ref{Change_of_basis_and_ring_stru_subsection}. Another important point here is that $\mathbb{C}_p\langle t|\ge\Theta\rangle$ does not depend on $\alpha$. This conciseness allows us to focus on larger algebraic structures in Section~4.

To describe the counterparts of $N(R)$ ($\mathcal{B}(R)$ in Figure~\ref{overview_figure}) and the direct sum decomposition of $\mathcal{M}_{r,s}^{la}(R)$, we define the following: for each function $\Theta:\mathbb{Z}_{\ge0}\rightarrow\mathbb{R}\cup\{\pm\infty\}$, let
\begin{equation}
\label{C_p<t|theta>_def}
    \mathbb{C}_p\langle t|\Theta\rangle\coloneqq\left\{\sum_{n=0}^{\infty}b_n\frac{t^n}{n!}\in\mathbb{C}_p\llbracket t\rrbracket\text{ }\middle|\text{ }v_p(b_n)-\Theta(n)\rightarrow\infty~(n\rightarrow\infty)\right\}.
\end{equation}

Note that neither $\mathbb{C}_p\langle t|\Theta\rangle\subset\mathbb{C}_p\langle t|\ge\Theta\rangle$ nor $\mathbb{C}_p\langle t|\ge\Theta\rangle\subset\mathbb{C}_p\langle t|\Theta\rangle$ holds in general.

A simple observation shows that there are one-to-one correspondences between
\begin{align*}
    N(\mathbb{C}_p)&\longleftrightarrow\mathbb{C}_p\langle t|0\rangle,\\
    N(\mathcal{O}_{\mathbb{C}_p})&\longleftrightarrow\mathbb{C}_p\langle t|0\rangle\cap\mathbb{C}_p\langle t|\ge0\rangle,\\
    \{d_n\}&\longmapsto\sum_{n=0}^{\infty}d_n\frac{t^n}{n!}.
\end{align*}

It is not difficult to see that $\mathbb{C}_p\langle t|\Theta\rangle$ and $\mathbb{C}_p\langle t|\ge\Theta\rangle$ are $\mathcal{O}_{\mathbb{C}_p}$-modules. In conclusion, we establish
\begin{align}
    &\mathcal{M}_r(\mathbb{C}_p)=\left(\bigoplus_{\alpha^{\ell}=1}e^{\alpha t}\mathbb{C}_p\langle t|\ge\lfloor\log_pn\rfloor+r\rangle\right)\oplus\mathbb{C}_p\langle t|0\rangle,\label{R_r(C_p)_decomposition}\\
    &\mathcal{M}_r(\mathcal{O}_{\mathbb{C}_p})=\left(\bigoplus_{\alpha^{\ell}=1}e^{\alpha t}\mathbb{C}_p\langle t|\ge\Theta_r\rangle\right)\oplus\Bigl(\mathbb{C}_p\langle t|0\rangle\cap\mathbb{C}_p\langle t|\ge0\rangle\Bigr)\label{R_r(O_C_p)_decomposition}
\end{align}
as $\mathcal{O}_{\mathbb{C}_p}$-modules.

In Section~\ref{Reformulation_via_power_series_subsection}, we also reformulate the smoothness of $p$-adic interpolating functions
\[f_i:\mathbb{Z}_p\rightarrow\mathbb{C'}_p:n\mapsto c_{n+\ell n}\quad(i=0,\cdots,\ell-1)\]
for $\{c'_n\}\in M'_r(R)$.

Again, by using Mahler expansion, the conditions for the smoothness of $\{f_i\}$ are rephrased as the conditions for Mahler coefficients, and we have
\begin{align}
    &\mathcal{M}_{r,s}^{la}(\mathbb{C}_p)=\bigoplus_{\alpha^{\ell}=1}e^{\alpha t}\Bigl(\mathbb{C}_p\langle t|\ge\lfloor\log_pn\rfloor+r\rangle\cap\mathbb{C}_p\langle t|v_p(\lfloor n/p^s\rfloor!)\rangle\Bigr),\\
    &\mathcal{M}_{r,s}^{la}(\mathcal{O}_{\mathbb{C}_p})=\bigoplus_{\alpha^{\ell}=1}e^{\alpha t}\Bigl(\mathbb{C}_p\langle t|\ge\Theta_r\rangle\cap\mathbb{C}_p\langle t|v_p(\lfloor n/p^s\rfloor!)\rangle\Bigr)\label{R_r,s^la(O_Cp)_decomposition}.
\end{align}
as $\mathcal{O}_{\mathbb{C}_p}$-modules.
(i.e., $\mathcal{A}_{r,s}(R)=\mathcal{A}_r(R)\cap\mathbb{C}_p\langle t|v_p(\lfloor n/p^s\rfloor!)\rangle$ in Figure~\ref{overview_figure}.)

\subsection{A convenient form}
In Section~\ref{Change_of_basis_and_ring_stru_subsection}, we prove that some specific $\Theta$, including $\Theta_r$ with $r\in[0,1]$ (resp. $\Theta(n)=v_p(\lfloor n/p^s\rfloor!)$ with $p+s\ge3$), satisfy the following:
\begin{enumerate}
    \item[(a)] $e^{pct}\in\mathbb{C}_p\langle t|\ge\Theta\rangle$ (resp. $\in\mathbb{C}_p\langle t| v_p(\lfloor n/p^s\rfloor!)\rangle$) for any $c\in\mathcal{O}_{\mathbb{C}_p}$,
    \item[(b)] $\mathbb{C}_p\langle t|\ge\Theta\rangle$ (resp. $\mathbb{C}_p\langle t|v_p(\lfloor n/p^s\rfloor!)\rangle$) is a subring of $\mathbb{C}_p\llbracket t\rrbracket$.
\end{enumerate}
These conditions enable us to change the basis of $\mathcal{M}_r(\mathcal{O}_{\mathbb{C}_p})$ and $\mathcal{M}_{r,s}^{la}(\mathcal{O}_{\mathbb{C}_p})$ as modules. That is, under the conditions $r\in[0,1]$ and $s+p\ge3$, we have 
\begin{align}
    \mathcal{M}_{p,\ell,r}(\mathcal{O}_{\mathbb{C}_p})&=\left(\bigoplus_{a\in A'}e^{at}\mathbb{C}_p\langle t|\ge\Theta_r\rangle\right)\oplus\Big(\mathbb{C}_p\langle t|0\rangle\cap\mathbb{C}_p\langle t|\ge0\rangle\Bigl)\label{R_r(O_Cp)_change_of_basis},\\
    \mathcal{M}_{p,\ell,r,s}^{la}(\mathcal{O}_{\mathbb{C}_p})&=\bigoplus_{a\in A'}e^{at}\Bigl(\mathbb{C}_p\langle t|\ge\Theta_r\rangle\cap\mathbb{C}_p\langle t|v_p(\lfloor n/p^s\rfloor!)\rangle\Bigr)\label{R_r,s(O_Cp)_change_of_basis}
\end{align}
where $A'\subset\mathcal{O}_{\mathbb{C}_p}$ is an arbitrary subset such that for each $\ell$-th root of unity $\alpha$, there is a unique $a\in A'$ satisfying $v_p(a-\alpha)\ge1$. Then, the descriptions of the right-hand sides of \eqref{R_r(O_Cp)_change_of_basis} and \eqref{R_r,s(O_Cp)_change_of_basis} are flexible and particularly convenient for applications.

A second benefit derived from (a) and (b) is that we can consider larger algebraic structures. Consider the case $\ell=p^h-1=q-1$ and let 
\begin{equation}
    \mathcal{R}_{p,h,r}=\mathcal{R}_r\coloneqq\sum_{\alpha^q=\alpha}e^{\alpha t}\mathbb{C}_p\langle t|\ge\Theta_r\rangle\label{R_r_def(R_r_alpha_decomposition_equ)}.
\end{equation}
Then, $\mathcal{R}_r$ is a subring of $\mathbb{C}_p\llbracket t\rrbracket$, and we have
\begin{align}
    \mathcal{R}_r&=\left\{\sum_{n=0}^{\infty}c_n\frac{t^n}{n!}~\middle|~
    \begin{array}{c}
        \{c_n\}\subset\mathcal{O}_{\mathbb{C}_p}\text{ and }\\
        \{c_n\}\text{ satisfies \eqref{Basic_equ} with at worst }n_k\le p^k
    \end{array}
    \right\}\label{R_r_rephrase_eq}\\
    &=\bigoplus_{a\in A}e^{at}\mathbb{C}_p\langle t|\ge\Theta_r\rangle\quad\quad\quad\text{(as a }\mathbb{C}_p\langle t|\ge\Theta_r\rangle\text{-module})\label{R_r_decomposition_equ}
\end{align}
where $A\subset\mathcal{O}_{\mathbb{C}_p}$ is an arbitrary subset such that for each $\alpha\in\{x\in\mathcal{O}_h\mid x^q=x\}$, there is a unique $a\in A$ satisfying $v_p(a-\alpha)\ge1$. In section~4.2, we determine the unit group of $\mathcal{R}_r$ as a corollary of a general result, and it is given as 
\begin{equation}
\label{unit_of_R_r}
    \mathcal{R}_r^{\times}=\{F(t)\in\mathcal{R}_r~|~v_p(F(0))=0\}.
\end{equation}

\subsection{Sequences Admitting a $p$-adic Measure}
The set 
\[\left\{\sum_{n=0}^{\infty}c_n\frac{t^n}{n!}~\middle|~
    \int_{\mathbb{Z}_p}x^nd\mu(x)=c_n~(\exists\mu)\right\}\]
in figure~\ref{overview_figure} denote the set of generating functions of $\{c_n\}$ for which there exists an $\mathcal{O}_{\mathbb{C}_p}$-valued measure $\mu$ such that 
\[\int_{\mathbb{Z}_p}x^nd\mu(x)=c_n.\]

In Section~\ref{Interpretation_of_integral_subsection}, it is revealed that this set is a subset of $\mathcal{M}_{r,s}^{la}(\mathcal{O}_{\mathbb{C}_p})$ when $r\le1$, $p-1\mid\ell$, and $p+s\ge3$. More precisely, 
\begin{equation}
\label{Integration_class_decomposition}
    \left\{\sum_{n=0}^{\infty}c_n\frac{t^n}{n!}~\middle|~
    \int_{\mathbb{Z}_p}x^nd\mu(x)=c_n~(\exists\mu)\right\}\subset\bigoplus_{\alpha^{p}=\alpha}e^{\alpha t}\mathcal{A}'=\left(\bigoplus_{\alpha^{p-1}=1}e^{\alpha t}\mathcal{A}'\right)\oplus\mathcal{A}'
\end{equation}
where $\mathcal{A}'=\mathbb{C}_p\langle t|\ge n\rangle$, and we have $\mathcal{A}'\subset\mathcal{A}_s(\mathcal{O}_{\mathbb{C}_p})\cap\mathcal{B}(\mathcal{O}_{\mathbb{C}_p})$ if $r\le1$, $p-1\mid\ell$, and $p+s\ge3$. While the conditions $r\le1$ and $p-1\mid\ell$ correspond to the fact that the existence of $\mu$ leads to congruence \eqref{Basic_equ} with $\ell=p-1,~r=1$, the condition $p+s\ge3$ is technical.

Therefore, the traditional argument of $p$-adic integration could be viewed as dealing a special class of sequences satisfying \eqref{Basic_equ}.

\subsection{Organization of the paper}
This paper is organized as follows. Section~2 provides the preliminaries on $p$-adic analysis that are needed in this paper. In~Section 3, we establish \eqref{Decomposition_of_M_r(R)_equ}, \eqref{R_r(C_p)_decomposition}, and \eqref{R_r(O_C_p)_decomposition}. In Section~4, we analyze the algebraic structures of the space $\mathbb{C}_p\langle t|\ge\Theta\rangle$, $\mathbb{C}_p\langle t|\Theta\rangle$, and their sums, and determine the unit group of $\mathcal{R}_r$ as a corollary of a general result. 
In Section~5, we explain some advantages of a method to derive congruences by using $p$-adic integration and locate it within our framework by establishing \eqref{Integration_class_decomposition}.
In Section~6, we apply our results obtained in Sections~3 and~4 to Genocchi numbers, Fibonacci numbers, Lucas numbers, generalized Euler numbers, and derangement numbers (this $r$ denotes a nonnegative integer). Finally, in Section~7, we prove that some of the examples in Section~6 indeed fall outside the scope of the standard $p$-adic integration method.

\subsection{Acknowledgment}
I am profoundly grateful to my advisor, Nobuo Tsuzuki, for many valuable discussions and insightful advice that greatly guided this research. I would also like to thank Asuka Shiga for reading an earlier version of the manuscript and for providing helpful comments on the exposition.

\section{Preliminaries}
Let $\mathscr{C}(\mathbb{Z}_p,\mathbb{C}_p)$ be a $\mathbb{C}_p$-vector space of continuous functions from $\mathbb{Z}_p$ to $\mathbb{C}_p$. We consider $\mathscr{C}(\mathbb{Z}_p,\mathbb{C}_p)$ as a $\mathbb{C}_p$-Banach space with valuation
\[v_{\mathscr{C}}(f)=\inf_{x\in\mathbb{Z}_p}v_p(f(x))\quad(f\in\mathscr{C}(\mathbb{Z}_p,\mathbb{C}_p)).\]

\begin{definition}
    For each integer $n\ge0$, we define the binomial polynomials by
    \[\binom{x}{n}=\begin{cases}
        1 & (\text{if }n=0), \\
        x(x-1)\cdots(x-n+1)/n! & (\text{if }n\ge1).
    \end{cases}\]
\end{definition}

\begin{theorem}[Mahler]
\label{Mahler_expansion_thm}
    The family of functions $\{\binom{x}{n}\}_{n=0}^{\infty}$ is a Banach basis of $\{\mathscr{C}(\mathbb{Z}_p,\mathbb{C}_p),v_{\mathscr{C}}\}$. That is, for every $f\in\mathscr{C}(\mathbb{Z}_p,\mathbb{C}_p)$, there exists a unique $\{a_n(f)\}_{n=0}^{\infty}\subset\mathbb{C}_p$ such that

    (i) $\displaystyle \lim_{n\rightarrow\infty}v_p(a_n(f))=\infty$

    (ii) $\displaystyle f(x)=\sum_{n=0}^{\infty}a_n(f)\binom{x}{n}$

    (iii) $\displaystyle v_{\mathscr{C}}(f)=\inf_{n\ge0}v_p(a_n(f))$.

    Moreover, we have
    \[a_n(f)=(\Delta^nf)(0)=\sum_{m=0}^{n}\binom{n}{m}(-1)^{n-m}f(m)\]
    for each $n\ge0$ where $\Delta f(x)=f(x+1)-f(x)$ is a difference operator, and $\Delta^n$ is its $n$-th iterate.
    
    (We call $a_n(f)$ \textit{the Mahler coefficient of degree $n$ of $f$}. )
\end{theorem}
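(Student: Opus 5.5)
The plan is to follow the standard proof of Mahler's theorem in three steps: first the formula for $a_n(f)$, then the $p$-adic decay of the finite differences, and finally the Banach-basis properties.

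\textbf{Step 1: the coefficient formula and uniqueness.} Suppose $f(x)=\sum_n a_n\binom{x}{n}$ with $a_n\to0$. Since $\Delta\binom{x}{n}=\binom{x}{n-1}$, applying $\Delta^n$ and evaluating at $0$ forces $a_n=(\Delta^nf)(0)$. Here we use $\binom{0}{m}=\delta_{m,0}$, and the termwise application is justified because the sum is uniformly convergent on $\mathbb{Z}_p$, as $v_p(\binom{x}{n})\ge0$ for $x\in\mathbb{Z}_p$. Expanding $\Delta^n=(T-1)^n$, where $T$ is the shift operator, gives the explicit alternating sum.

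\textbf{Step 2: the decay $v_p((\Delta^nf)(0))\to\infty$.} This is the main obstacle. By compactness, $f$ is uniformly continuous and bounded; normalise so that $v_{\mathscr{C}}(f)\ge0$.

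Fix $N$ and choose $k$ with $v_p(f(x+p^k)-f(x))\ge N$ for all $x$. Work in $\mathbb{Z}_p$-coefficient operator algebra. In $\mathbb{Z}[T]$ we have $(T-1)^{p^k}\equiv T^{p^k}-1\pmod p$, so
\[\Delta^{p^k}=(T^{p^k}-1)+pP(T)\]
with $P\in\mathbb{Z}[T]$.

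Applied to $g$ with $v_{\mathscr{C}}(g)\ge m$, the first term has valuation $\ge\min(m,N)$. More carefully, by linearity it has valuation at least $m$ plus the gain from continuity of $g$. The second term has valuation $\ge m+1$.

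To avoid tracking the modulus of continuity of iterates, I would instead argue as follows. The operators $\Delta^j$ do not worsen the modulus of continuity: if $f(x+p^k)\equiv f(x)\pmod{p^N}$, the same holds for $\Delta^jf$. Hence
\[v_{\mathscr{C}}(\Delta^{p^k}g)\ge\min\bigl(N,\,v_{\mathscr{C}}(g)+1\bigr)\quad\text{for }g=\Delta^jf.\]
Iterating $N$ times gives $v_{\mathscr{C}}(\Delta^nf)\ge N$ for all $n\ge Np^k$, which proves (i).

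\textbf{Step 3: convergence to $f$ and the norm identity.} Given (i), the series $g=\sum a_n\binom{x}{n}$ converges uniformly, so $g$ is continuous. On nonnegative integers $m$ we have $g(m)=\sum_{n\le m}\binom{m}{n}(\Delta^nf)(0)=(T^mf)(0)=f(m)$, by the binomial expansion $T^m=(1+\Delta)^m$. By density of $\mathbb{Z}_{\ge0}$ in $\mathbb{Z}_p$ and continuity, $g=f$, which is (ii).

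For (iii):
\begin{itemize}
\item From (ii) and $v_p(\binom{x}{n})\ge0$, we get $v_{\mathscr{C}}(f)\ge\inf_n v_p(a_n)$.
\item From the coefficient formula, each $a_n$ is a $\mathbb{Z}$-linear combination of values of $f$, so $v_p(a_n)\ge v_{\mathscr{C}}(f)$.
\end{itemize}
Together these give equality.

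Uniqueness follows from Step 1, so $\{\binom{x}{n}\}$ is an orthonormal Banach basis.
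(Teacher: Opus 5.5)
Your proof is correct and complete. The paper gives no argument of its own here and simply cites Schikhof (Theorem~51.1) and Colmez (Theorem~1.3.2); your route is the standard finite-difference proof found in Colmez's treatment: uniform continuity combined with $(T-1)^{p^k}\equiv T^{p^k}-1 \pmod p$ gives $v_{\mathscr{C}}(\Delta^{p^k}g)\ge\min(N,v_{\mathscr{C}}(g)+1)$ for every $g=\Delta^j f$, and then you recover $f$ from its values on $\mathbb{Z}_{\ge0}$ by density.
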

\begin{proof}
    See Theorem~51.1 in \cite{Schikhof}, or Theorem~1.3.2 in \cite{Colmez}.
\end{proof}

\begin{remark}
\label{converse_rmk}
(1) If the image of $f$ is contained in $A$ $(A=\mathbb{Z}_p,\mathbb{Q}_p$, or a finite extension of $\mathbb{Q}_p)$, then the Mahler coefficients of $f$ are also contained in $A$.

(2) By the uniqueness of Mahler coefficients, if a function $f:\mathbb{Z}_{\ge0}\rightarrow\mathbb{C}_p$ satisfies
\[v_p\left(\sum_{m=0}^{n}\binom{n}{m}(-1)^{n-m}f(m)\right)\rightarrow\infty\quad(n\rightarrow\infty),\]
then $f$ can be continuously extended to $\mathbb{Z}_p$.
\end{remark}

\begin{definition}
    Let $f\in\mathscr{C}(\mathbb{Z}_p,\mathbb{C}_p)$. For each $r\in\mathbb{R}$, we say that $f$ is $p^{-r}$-Lipschitz continuous if 
    \[|f(x)-f(y)|_p\le p^{-r}|x-y|_p\]
    for all $x,y\in\mathbb{Z}_p$, or equivalently,
    \[v_p(f(x)-f(y))\ge v_p(x-y)+r\]
    for all $x,y\in\mathbb{Z}_p$.
\end{definition}

\begin{theorem}
\label{smooth_coeffi_thm}
    For a continuous function $f(x)=\sum_{n=0}^{\infty}a_n(f)\binom{x}{n}\in\mathscr{C}(\mathbb{Z}_p,\mathbb{C}_p)$, we have the following.

    (1) For each $r\in\mathbb{R}$, $f$ is $p^{-r}$-Lipschitz continuous if and only if \[v_p(a_n)\ge\lfloor\log_pn\rfloor+r \text{ for all }n\ge0\]
    where by convention, $\log_p0=-\infty$.

    (2) For each $s\in\mathbb{Z}_{\ge0}$, $f$ is locally analytic of order $s$ if and only if \[\lim_{n\rightarrow\infty}\left(v_p(a_n)-v_p\left(\left\lfloor\frac{n}{p^s}\right\rfloor!\right)\right)=\infty.\]
\end{theorem}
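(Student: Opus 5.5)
For (1) I would reduce Lipschitz continuity to a statement about the shifted differences $\Delta_{p^k}f(x)=f(x+p^k)-f(x)$, and then compare Mahler coefficients on both sides.

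\textbf{Reduction.} Since $f$ is continuous and $\mathbb{Z}_{\ge0}$ is dense in $\mathbb{Z}_p$, it suffices to test integers $x<y$. Write $y-x=p^km$ with $p\nmid m$. Then $f(y)-f(x)$ is a telescoping sum of $m$ terms $\Delta_{p^k}f(x+jp^k)$. So $f$ is $p^{-r}$-Lipschitz if and only if $v_{\mathscr{C}}(\Delta_{p^k}f)\ge k+r$ for every $k\ge0$.

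\textbf{Mahler coefficients of $\Delta_{p^k}f$.} Vandermonde's identity $\binom{x+p^k}{n}=\sum_j\binom{p^k}{j}\binom{x}{n-j}$ gives
\[
a_m(\Delta_{p^k}f)=\sum_{j=1}^{p^k}\binom{p^k}{j}a_{m+j}(f).
\]

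\textbf{Sufficiency.} Assume $v_p(a_n)\ge\lfloor\log_pn\rfloor+r$ for all $n$. Each term above has valuation at least
\[
k-v_p(j)+\lfloor\log_p(m+j)\rfloor+r\ge k+r,
\]
because $v_p(j)\le\lfloor\log_pj\rfloor\le\lfloor\log_p(m+j)\rfloor$. By Theorem~\ref{Mahler_expansion_thm}(iii) this gives $v_{\mathscr{C}}(\Delta_{p^k}f)\ge k+r$ for every $k$.

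\textbf{Necessity.} Assume the Lipschitz condition and prove $v_p(a_n)\ge\lfloor\log_pn\rfloor+r$ by strong induction on $n\ge1$; the case $n=0$ is vacuous. Put $k=\lfloor\log_pn\rfloor$ and let $T$ be the shift operator, so $T=1+\Delta$. Expanding $(1+\Delta)^{p^k}$ gives the operator identity
\[
\Delta^{p^k}=(T^{p^k}-1)-\sum_{j=1}^{p^k-1}\binom{p^k}{j}\Delta^j.
\]
Apply $\Delta^{n-p^k}$ and evaluate at $0$:
\[
a_n=\bigl(\Delta^{n-p^k}\Delta_{p^k}f\bigr)(0)-\sum_{j=1}^{p^k-1}\binom{p^k}{j}a_{n-p^k+j}.
\]
The operator $\Delta$ does not decrease $v_{\mathscr{C}}$, so the first term has valuation at least $k+r$. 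In the sum, set $m=n-p^k+j$; then $j\le m<n$. By the induction hypothesis the $j$-th term has valuation at least $k-v_p(j)+\lfloor\log_pm\rfloor+r\ge k+r$, again since $v_p(j)\le\lfloor\log_pm\rfloor$. This closes the induction and proves (1).

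\textbf{Part (2).} This is Amice's theorem, and I would follow the proof in Colmez (or Schikhof). For each $a\in\{0,\dots,p^s-1\}$ set $g_a(y)=f(a+p^sy)$. Then $f$ is locally analytic of order $s$ exactly when every $g_a$ is a power series $\sum b_my^m$ with $b_m\to0$, that is, analytic on the closed unit disc. The order-$0$ case converts between the monomial basis and the binomial basis using Stirling numbers of the first and second kind. It gives: $g$ is analytic on $\mathbb{Z}_p$ if and only if $v_p(a_m(g))-v_p(m!)\to\infty$.

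\textbf{Main obstacle.} The step I expect to be hardest is passing between the Mahler coefficients of $f$ and those of the family $g_a$. To handle it I would use the decomposition $f=\sum_a\mathbf 1_{a+p^s\mathbb{Z}_p}\,g_a\bigl((x-a)/p^s\bigr)$. I would then estimate the Mahler coefficients of $x\mapsto\binom{(x-a)/p^s}{m}\mathbf 1_{a+p^s\mathbb{Z}_p}(x)$, or conversely expand $\binom{a+p^sy}{n}$ in the basis $\binom{y}{m}$. The aim is to show the loss is exactly $v_p(\lfloor n/p^s\rfloor!)$ up to a bounded error. If this bookkeeping becomes unwieldy, I would instead quote Théorème~I.4.7 of \cite{Colmez} directly, as the paper does for Theorem~\ref{Mahler_expansion_thm}.
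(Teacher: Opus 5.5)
Your proof of (1) is correct and complete, and it takes a genuinely different route from the paper. The paper proves nothing here: it cites Anashin (Theorem 4.3) and Schikhof (Theorem 53.4(i)) for (1), and Amice (Corollary 2(b)) and Colmez (Theorem 1.7.8) for (2).

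You instead argue directly:
\begin{itemize}
\item You reduce $p^{-r}$-Lipschitz continuity to $v_{\mathscr{C}}(\Delta_{p^k}f)\ge k+r$ for all $k$, by telescoping along $y-x=p^km$ and then using density.
\item You compute $a_m(\Delta_{p^k}f)=\sum_{j=1}^{p^k}\binom{p^k}{j}a_{m+j}$.
\item Both directions then rest on the single estimate $v_p\binom{p^k}{j}\ge k-v_p(j)\ge k-\lfloor\log_p m'\rfloor$ for $1\le j\le m'$.
\end{itemize}
The necessity direction, via $\Delta^{p^k}=(T^{p^k}-1)-\sum_{j=1}^{p^k-1}\binom{p^k}{j}\Delta^j$ and strong induction, is sound. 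For $n=1$ the sum is empty, and every index $m=n-p^k+j$ satisfies $j\le m<n$. Your approach gives a self-contained argument using only Theorem~\ref{Mahler_expansion_thm} and Kummer's theorem; the paper's citation gives brevity.

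For (2), your reductions are correct. Order $s$ is equivalent to each $g_a$ being a power series with coefficients tending to $0$, and the order-$0$ case follows via Stirling numbers. However, the general case is not actually proved, so in the end you rely on a citation just as the paper does. The paper's Colmez reference numbers this result Theorem 1.7.8, so ``Th\'eor\`eme I.4.7 of Colmez'' would not match that bibliography entry.

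If you want to close the argument yourself, one direction is easy. As a polynomial in $y$, $\lfloor n/p^s\rfloor!\binom{a+p^sy}{n}$ has coefficients in $\mathbb{Z}_p$. To see this, factor $\prod_{i<n}(a-i+p^sy)$ according to $\min\{v_p(a-i),s\}$ and compare with Proposition~\ref{val_of_n!_prop}. Hence if $v_p(a_n)-v_p(\lfloor n/p^s\rfloor!)\to\infty$, then $g_a=\sum_n a_n\binom{a+p^sy}{n}$ converges in the Gauss norm, so $g_a$ is a power series with coefficients tending to $0$.

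The converse is the real content of Amice's theorem. It needs a bound $v_p(a_n(h_m))\ge v_p(\lfloor n/p^s\rfloor!)-C$, uniform in $m$, for $h_m(x)=\mathbf{1}_{a+p^s\mathbb{Z}_p}(x)\bigl((x-a)/p^s\bigr)^m$. The bookkeeping you describe does not obviously deliver this uniformity, which is why quoting the theorem is reasonable.
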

\begin{proof}
    (1) See Theorem~4.3 in \cite{Anashin} or Theorem~53.4(i) in \cite{Schikhof}. (2) See Corollary~2(b) in \cite[p. 162]{Amice}; see also Theorem~1.7.8 in \cite{Colmez} for an accessible treatment in English.
\end{proof}

\begin{remark}
\label{a_0(f)_rmk}
    If $r\ge0$ and $f:\mathbb{Z}_p\rightarrow\mathbb{C}_p$ is $p^{-r}$-Lipschitz continuous, the following conditions are equivalent:
    \begin{enumerate}
        \item[(i)] $f(\mathbb{Z}_p)\subset\mathcal{O}_{\mathbb{C}_p}$
        \item[(ii)] $v_p(a_0(f))\ge0$
    \end{enumerate}
    Indeed, since $a_0(f)=f(0)$, (i) $\Rightarrow$ (ii) is clear. (ii) $\Rightarrow$  (i) follows from the strong triangle inequality and $|f(x)-f(0)|_p\le p^{-r}|x|_p$ for all $x\in\mathbb{Z}_p$.
\end{remark}

We collect some classical basic facts on the $p$-adic valuations of $n!$ and binomial coefficients, which will be used mainly in Section~4.
\begin{proposition}[Legendre's formula]
\label{val_of_n!_prop}
    Let $n$ be a nonnegative integer, and let
    \[n=a_xp^x+\cdots+a_1p+a_0\quad(x\ge0,~0\le a_0,\cdots,a_x\le p-1)\]
    be a base-$p$ expansion of $n$, without requiring $a_x\ne0$. Then, we have 
    \[v_p(n!)=\frac{n-\sum_{i=0}^{x}a_i}{p-1}.\]
    In particular, we have
    \[\frac{n}{p-1}-\lfloor\log_pn\rfloor-1\le v_p(n!)<\frac{n}{p-1}\]
    for all $n\ge1$.
\end{proposition}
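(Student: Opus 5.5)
The proof has two parts: first Legendre's formula itself, then the two-sided bound deduced from it.

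For Legendre's formula, we start from the classical count $v_p(n!)=\sum_{j\ge1}\lfloor n/p^j\rfloor$. This holds because $v_p(n!)=\sum_{m=1}^{n}v_p(m)$, and each $m\le n$ contributes one for every $j\ge1$ with $p^j\mid m$; the number of multiples of $p^j$ in $\{1,\dots,n\}$ is $\lfloor n/p^j\rfloor$. Next we substitute the base-$p$ expansion. For $1\le j\le x$ we have
\[\left\lfloor \frac{n}{p^j}\right\rfloor=\sum_{i=j}^{x}a_ip^{i-j},\]
and for $j>x$ the term vanishes. Interchanging the order of summation gives
\[v_p(n!)=\sum_{i=1}^{x}a_i\sum_{j=1}^{i}p^{i-j}=\sum_{i=0}^{x}a_i\frac{p^i-1}{p-1}.\]
This equals $\frac{n-\sum a_i}{p-1}$, since $\sum_i a_ip^i=n$ and the $i=0$ term is zero. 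Allowing $a_x=0$ changes nothing, because leading zeros contribute zero.

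For the inequalities, take $n\ge1$ and set $x=\lfloor\log_pn\rfloor$, so that the expansion has exactly $x+1$ digits with $a_x\ne0$. The upper bound is strict because $\sum a_i\ge a_x\ge1$, so $v_p(n!)\le\frac{n-1}{p-1}<\frac{n}{p-1}$. For the lower bound, each digit satisfies $a_i\le p-1$, hence $\sum a_i\le (p-1)(x+1)$. Therefore
\[v_p(n!)\ge \frac{n}{p-1}-(x+1)=\frac{n}{p-1}-\lfloor\log_pn\rfloor-1.\]

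No step here is a real obstacle. The only points that need care are justifying the count $\sum_j\lfloor n/p^j\rfloor$ and the index bookkeeping when the sums are interchanged.
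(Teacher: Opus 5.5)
Your proof is correct and follows essentially the same route as the paper: you count the $\lfloor n/p^j\rfloor$ multiples of $p^j$ in $\{1,\dots,n\}$, interchange the double sum to get $\sum_i a_i(p^i-1)/(p-1)$, and bound the digit sum below by $a_x\ge1$ and above by $(p-1)(x+1)$. The only cosmetic difference is that you fix $x=\lfloor\log_p n\rfloor$ directly, whereas the paper lets $w$ be the index of the top nonzero digit and uses $w\le\lfloor\log_p n\rfloor$.
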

\begin{proof}
    Since for each $1\le s\le x$, $a_xp^{x-s}+\cdots+a_s$ numbers of $\{1,2,\cdots,n\}$ are divisible by $p^s$, we have
    \begin{align*}
        v_p(n!)&=\sum_{i=1}^{x}(a_xp^{x-s}+\cdots+a_s)\\
        &=\sum_{i=1}^{x}a_i\sum_{j=0}^{i-1}p^j\\
        &=\frac{\sum_{i=1}^{x}a_ip^i-\sum_{i=1}^{x}a_i}{p-1}\\
        &=\frac{n-\sum_{i=0}^{x}a_i}{p-1}.
    \end{align*}
    Suppose $n\ge1$ and let $w$ be the maximum integer such that $a_w\ne0$. Then, $w\le\lfloor\log_pn\rfloor$. Thus,
    \[\frac{n}{p-1}-\lfloor\log_pn\rfloor-1\le\frac{n-(w+1)(p-1)}{p-1}\le v_p(n!)\le\frac{n-a_w}{p-1}<\frac{n}{p-1}.\]
\end{proof}

\begin{proposition}[Kummer's theorem]
\label{val_of_binom_coeff_prop}
    Let $n\ge m\ge1$ are positive integers, and let
    \begin{align*}
        n&=a_xp^x+\cdots+a_1p+a_0\quad(x\ge0,~0\le a_0,\cdots,a_x\le p-1),\\
        m&=b_yp^y+\cdots+b_1p+b_0\quad(y\ge0,~0\le b_0,\cdots,b_y\le p-1),\\
        n-m&=c_zp^z+\cdots+c_1p+c_0\quad(z\ge0,~0\le c_0,\cdots,c_z\le p-1)
    \end{align*}
    be base-$p$ expansions of $n,m,n-m$ respectively, without requiring $a_x\ne0$, $b_y\ne0$, or $c_z\ne0$. Then, we have
    \[v_p\left(\binom{n}{m}\right)=\frac{1}{p-1}\sum_{i\ge0}(b_i+c_i-a_i).\]
\end{proposition}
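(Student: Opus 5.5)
The plan is to reduce Kummer's theorem to Legendre's formula (Proposition~\ref{val_of_n!_prop}) through the identity
\[v_p\left(\binom{n}{m}\right)=v_p(n!)-v_p(m!)-v_p((n-m)!).\]
The integers $n$, $m$ and $n-m$ may have base-$p$ expansions of different lengths. To handle this, I will pad all three with zero digits up to a common length $L\ge\max(x,y,z)$. Padding leaves Legendre's formula unchanged, since that proposition explicitly allows leading zero digits. It also leaves the sum $\sum_{i\ge0}(b_i+c_i-a_i)$ unchanged, because the added terms are all zero.

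First, I apply Proposition~\ref{val_of_n!_prop} to each of $n$, $m$ and $n-m$. This gives
\[v_p(n!)=\frac{n-\sum_i a_i}{p-1},\qquad v_p(m!)=\frac{m-\sum_i b_i}{p-1},\qquad v_p((n-m)!)=\frac{(n-m)-\sum_i c_i}{p-1}.\]
The case $m=n$ is allowed. Then $n-m=0$, all $c_i=0$, and $v_p(0!)=0$, which is consistent with the formula with all digits zero. I should check that Legendre's formula still holds for $n=0$. It does, since the proposition is stated for nonnegative $n$.

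Next, I substitute these into the identity above. The linear terms cancel, because $n-m-(n-m)=0$. What remains is
\[\frac{\sum_i b_i+\sum_i c_i-\sum_i a_i}{p-1}=\frac{1}{p-1}\sum_{i\ge0}(b_i+c_i-a_i),\]
which is the claimed formula.

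There is essentially no obstacle in this argument. The only point needing care is the bookkeeping with differing expansion lengths, and the uniform zero-padding disposes of it. If desired, one can add the remark that $\sum_i(b_i+c_i-a_i)=(p-1)\cdot(\text{number of carries when adding } m \text{ and } n-m)$. This follows digit by digit from $b_i+c_i+\varepsilon_{i}=a_i+p\,\varepsilon_{i+1}$, where $\varepsilon_i$ is the carry into position $i$. The theorem as stated does not require this remark.
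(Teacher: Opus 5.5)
Your proposal is correct and follows the paper's own route: the paper proves this in one line by combining Legendre's formula (Proposition~\ref{val_of_n!_prop}) with $v_p\bigl(\binom{n}{m}\bigr)=v_p(n!)-v_p(m!)-v_p((n-m)!)$. Your zero-padding bookkeeping, your treatment of the case $m=n$, and your optional remark about carries are all fine additions.
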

\begin{proof}
    It follows from Proposition~\ref{val_of_n!_prop} and $v_p(\binom{n}{m})=v_p(n!)-v_p(m!)-v_p((n-m)!)$.
\end{proof}

\section{A Framework for Studying Congruence Properties}
In this section, let $R$ be a complete subring of $\mathbb{C}_p$, and suppose $\mathcal{O}_{h}\subset R$.

We first decompose a space of sequences $M_r(R)$ in subsection~\ref{Decomposition_of_sequences_subsection}. We then reformulate each direct sum component in terms of a space of generating functions, in particular for $R=\mathcal{O}_{\mathbb{C}_p},\mathbb{C}_p$ in subsection~\ref{Reformulation_via_power_series_subsection}.

\subsection{Decomposition of Sequences}
\label{Decomposition_of_sequences_subsection}
The aim of this subsection is to prove the following direct sum decomposition \eqref{Decomposition_of_M_r(R)_equ}.

It is almost clear by definition that 
\[M'_r(R)=\Bigl\{\{c'_n\}_{n=0}^{\infty}\subset R\Bigm|\{c'_n\} \text{ satisfies \eqref{Basic_equ_2}}\Bigr\}\]
and $N(R)$ are submodules of $M_r(R)$. If $\alpha$ is a $\ell$-th root of unity and $\{c_n\}\in M_r^{(\alpha)}(R)$, we have
\[c_{n+\ell p^k}\equiv(\alpha^k)^{\ell}c_n=c_n\mod{p^{k+r}}\quad\text{for all }k,n\in\mathbb{Z}_{\ge0}.\]
Thus, $M_r^{(\alpha)}(R)$ is a submodule of $M'_r(R)$.

In this subsection, we can establish the following. 

\begin{proposition}
\label{internal_direct_sum_prop}
    Let $r\in\mathbb{R}$.
    Let $A$ be a complete subring of $\mathbb{C}_p$ containing $\mathcal{O}_h$.

    Then, we have
    \begin{align}
        M'_r(R)=\bigoplus_{\alpha^{\ell}=1}M_r^{(\alpha)}(R), \label{M'_r(R)_Decomposition_equ}\\
        M_r(R)=M'_r(R)\oplus N(R)\label{M_r(R)=M'_r(R)+N(R)_equ}.
    \end{align}
    In particular, we have \eqref{Decomposition_of_M_r(R)_equ}.
\end{proposition}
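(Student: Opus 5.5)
The proof splits into two independent parts. First I construct the projection $M_r(R)\to M'_r(R)$ and show that its kernel is $N(R)$, which gives \eqref{M_r(R)=M'_r(R)+N(R)_equ}. Then I decompose $M'_r(R)$ by a discrete Fourier transform in the residue of $n$ modulo $\ell$, which gives \eqref{M'_r(R)_Decomposition_equ}. Throughout I read the ring $A$ in the statement as $R$. The facts used repeatedly are that $p\nmid\ell$ (since $\ell\mid p^h-1$), that $\ell^{-1}\in\mathbb{Z}_p$, and that all $\ell$-th roots of unity lie in $\mathcal{O}_h\subset R$ with pairwise differences that are units.

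\textbf{Step 1: $M_r=M'_r\oplus N$.} Let $\{c_n\}\in M_r(R)$ with thresholds $n_k$, and put $N_k=\max(n_0,\dots,n_k)$. The basic observation is as follows. If $m,m'\ge N_k$ and $m\equiv m'\pmod{\ell p^k}$, then $c_m\equiv c_{m'}\pmod{p^{k+r}}$, by telescoping along steps of length $\ell p^k$, each of which starts at an index $\ge n_k$.

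For fixed $n$, choose $m_k\equiv n\pmod{\ell p^k}$ with $m_k\ge N_k$. The observation shows that $\{c_{m_k}\}$ is Cauchy, and that its limit $c'_n\in R$ does not depend on the choices. For large $k$ one may take $m_k=n+\ell p^{K}$, which recovers the formula $c'_n=\lim_k c_{n+\ell p^k}$ from the introduction.

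Next I check the required properties.
\begin{itemize}
\item $\{c'_n\}\in M'_r(R)$: for $K\ge k$, the chosen indices $m_K$ for $n$ and for $n+\ell p^k$ agree modulo $\ell p^k$ and are $\ge N_k$. Hence $c'_{n+\ell p^k}\equiv c'_n\pmod{p^{k+r}}$ for all $n$.
\item $d_n:=c_n-c'_n$ lies in $N(R)$: if $n\ge N_k$, then $n\equiv m_K\pmod{\ell p^k}$ for all $K\ge k$. Hence $v_p(d_n)\ge k+r$, which also gives the claimed link between $n_k$ and the decay rate of $\{d_n\}$.
\item $N(R)\subset M_r(R)$: this is immediate.
\item $M'_r\cap N=0$: if $\{c'_n\}$ lies in both, then for every $K\ge k$ we have $c'_n\equiv c'_{n+\ell p^K}\pmod{p^{k+r}}$, and the right-hand side tends to $0$. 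Hence $v_p(c'_n)\ge k+r$ for every $k$, so $c'_n=0$.
\end{itemize}

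\textbf{Step 2: $M'_r=\bigoplus M_r^{(\alpha)}$.} Given $\{c'_n\}\in M'_r(R)$, each $f_i(x)=c'_{i+\ell x}$ is $p^{-r}$-Lipschitz on $\mathbb{Z}_{\ge0}$ and so extends to $\mathbb{Z}_p$. Put $G_i(y)=f_i((y-i)/\ell)$, which is still $p^{-r}$-Lipschitz since $\ell$ is a unit, and define
\[
H_\alpha(y)=\frac1\ell\sum_{i=0}^{\ell-1}\alpha^{-i}G_i(y).
\]
Orthogonality of characters gives $c'_n=\sum_{\alpha^\ell=1}\alpha^nH_\alpha(n)$. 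Each summand $\alpha^nH_\alpha(n)$ lies in $M^{(\alpha)}_r(R)$, because
\[
\alpha^{n+p^k}H_\alpha(n+p^k)-\alpha^{p^k}\alpha^nH_\alpha(n)=\alpha^{n+p^k}\bigl(H_\alpha(n+p^k)-H_\alpha(n)\bigr)
\]
has valuation $\ge k+r$.

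\textbf{Step 3: directness.} This is the step needing the most care. For $\{c_n\}\in M^{(\alpha)}_r(R)$, the function $H(n)=\alpha^{-n}c_n$ satisfies $H(n+p^k)\equiv H(n)\pmod{p^{k+r}}$, so it is Lipschitz and extends to $\mathbb{Z}_p$. Suppose $\sum_\alpha\alpha^nH_\alpha(n)=0$ for all $n\ge0$. Fix $y\in\mathbb{Z}_p$ and a residue $i$. By the Chinese remainder theorem, choose $n_j\equiv i\pmod\ell$ with $n_j\to y$ $p$-adically. Continuity then gives $\sum_\alpha\alpha^iH_\alpha(y)=0$ for every $i$.

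The Vandermonde matrix $(\alpha^i)$ has unit determinant, since the differences of distinct $\ell$-th roots of unity are units. Hence $H_\alpha(y)=0$ for all $\alpha$ and $y$, so each component vanishes. Finally, combining Steps 1 and 2 yields \eqref{Decomposition_of_M_r(R)_equ}.
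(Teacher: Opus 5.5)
Your proof is correct. Step~1 is essentially the paper's Lemma~\ref{c'_n_and_d_n_lemma} plus the first half of its uniqueness argument. There $c'_n$ is the limit along $n+\ell p^K$, and $M'_r\cap N=0$ because elements of $M'_r$ are periodic modulo $p^{k+r}$ while elements of $N$ tend to $0$. One small correction to your side remark recovering $c'_n=\lim_K c_{n+\ell p^K}$: $K$ must be taken large relative to $k$, since $N_k$ may outgrow $\ell p^k$.

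Steps~2 and~3 follow a genuinely different route from the paper.

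\textbf{The paper's route.} It stays entirely with sequences. In Lemma~\ref{decomposition_lemma} it applies the inverse of the Vandermonde matrix $(\zeta^{ij})$ to $(c_n,c_{n+p^{kh}},\dots,c_{n+(\ell-1)p^{kh}})$, lets $k\to\infty$, and checks the twisted congruence with the cyclic shift matrix $S$. Independence is then obtained by applying the shift operators $S(p^{mh})-\alpha_i$ and using $\alpha_i^{p^{mh}}=\alpha_i$.

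\textbf{Your route.} You interpolate each residue class by a $p^{-r}$-Lipschitz function on $\mathbb{Z}_p$. You move all classes onto a common variable via $y=i+\ell x$, which is legitimate because $\ell\in\mathbb{Z}_p^\times$. You then take the finite Fourier transform over the $\ell$-th roots of unity pointwise. The twisted congruence becomes a one-line computation. Independence follows from two facts: each residue class mod $\ell$ is dense in $\mathbb{Z}_p$, and the Vandermonde determinant is nonzero (being a unit is not needed there).

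\textbf{What each buys.} Your argument is shorter and never needs the auxiliary shifts by $p^{kh}$; it uses only $p\nmid\ell$ and the presence of the $\ell$-th roots of unity in $R$. It also produces the explicit formula $\alpha^{-n}c^{(\alpha)}_n=\ell^{-1}\sum_i\alpha^{-i}f_i\bigl((n-i)/\ell\bigr)$, with $f_i$ the continuous extensions. This is exactly the relation the paper rederives later in Proposition~\ref{smoothness_of_interpolation_prop}. In exchange, the paper's approach avoids continuous extension. It also gives finite-level approximations $c^{(i)}_{n,k}\equiv c^{(\zeta^i)}_n \pmod{p^{kh+r}}$ that are computable from finitely many terms.

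The one point you leave implicit is the easy inclusion $M^{(\alpha)}_r(R)\subset M'_r(R)$: iterate the twisted congruence $\ell$ times and use $\alpha^{\ell p^k}=1$. The paper records this just before the proposition.
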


Since the decomposition itself will play an important role in later discussions, we first establish it in the following two lemmas. After proving these lemmas, we complete the proof of Proposition~\ref{internal_direct_sum_prop} by showing the uniqueness of this decomposition, i.e., that the sum is direct.
\begin{lemma}
\label{decomposition_lemma}
    Let $r\in\mathbb{R}$.
    
    Let $\{c_n\}_{n=0}^{\infty}\subset R$ be a sequence satisfying 
    \[c_{n+\ell p^k}\equiv c_n\mod{p^{k+r}}\text{ for all }k,n\in\mathbb{Z}_{\ge0}.\]

    Then, there exists a set of sequences $\{\{c_n^{(\alpha)}\}_{n=0}^{\infty}\subset R\}_{\alpha^{\ell}=1}$ such that
    \begin{enumerate}[label=(\roman*)]
        \item $c_n=\sum_{\alpha^{\ell}=1}c_n^{(\alpha)}$ for all $n\ge0$
        \item $c_{n+p^k}\equiv \alpha^{p^k} c_n\mod{p^{k+r}}$ for all $k,n\in\mathbb{Z}_{\ge0}$.
    \end{enumerate}
\end{lemma}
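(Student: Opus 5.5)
The idea is to build the components $\{c_n^{(\alpha)}\}$ by a discrete Fourier projection over $\ell$-th roots of unity. The naive version $\frac{1}{\ell}\sum_{j=0}^{\ell-1}\alpha^{-j}c_{n+j}$ fails, because it mixes in shifts by arbitrary multiples of $\ell$, which only preserve $c_n$ modulo $p^{r}$. I will therefore replace the shift by $j$ with a shift by $jq^K$, $K\to\infty$. The point is that $q^K\equiv1\pmod{\ell}$ while $q^K\to0$ $p$-adically, so the shift moves only the "$\ell$-part" of the index. For $\alpha^\ell=1$ and $n\ge0$ I will define
\[
c_n^{(\alpha)}\coloneqq\lim_{K\to\infty}\frac1\ell\sum_{j=0}^{\ell-1}\alpha^{-j}c_{n+jq^K}.
\]
Here $1/\ell\in\mathcal{O}_h^\times\subset R$ since $p\nmid\ell$, and $\alpha\in\mathcal{O}_h$.

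\textbf{Step 1 (the limit exists in $R$).} Since $\ell\mid q^{K+1}-q^K$, we have $jq^{K+1}-jq^K=\ell p^{hK}\cdot j\frac{q-1}{\ell}$. Writing $j\frac{q-1}{\ell}=\sum_i d_ip^i$ in base $p$ and applying the hypothesis repeatedly (once per base-$p$ digit, using the ultrametric inequality) gives
\[
c_{n+jq^{K+1}}\equiv c_{n+jq^K}\pmod{p^{hK+r}}.
\]
More generally, the same argument gives the Lipschitz-type bound $v_p(c_{m+\ell N}-c_m)\ge v_p(N)+r$ for all $m,N\ge0$. So the averages form a Cauchy sequence, and the limit lies in $R$ because $R$ is complete.

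\textbf{Step 2 (property (i)).} Summing over $\alpha$ and using $\sum_{\alpha^\ell=1}\alpha^{-j}=\ell\,\delta_{j,0}$ for $0\le j<\ell$ gives $\sum_\alpha c_n^{(\alpha)}=\lim_K c_{n}=c_n$.

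\textbf{Step 3 (property (ii)), the main point.} First I will note the exact twisting rule: shifting $n$ by $q^K$ permutes the summands cyclically up to an error. Precisely, $c_{n+\ell q^K+\cdots}$ versus $c_{n+\cdots}$ differ by at most $p^{-(hK+r)}$, which vanishes in the limit. This yields
\[
c^{(\alpha)}_{n+bq^K+\ell N}\equiv \alpha^{b}\,c^{(\alpha)}_{n}\pmod{p^{v_p(N)+r}}
\]
up to errors going to $0$ as $K\to\infty$. Now take $b\in\{0,\dots,\ell-1\}$ with $b\equiv p^k\pmod\ell$. For $K\gg k$, the integer $p^k-bq^K$ is divisible by $\ell$ (because $q^K\equiv1$) and by $p^k$, so $p^k=bq^K+\ell N$ with $v_p(N)=k$. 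If $p^k-bq^K$ is negative, I will instead shift from the other side, or add a large multiple $\ell q^{K'}$ of valuation $\ge k$. Hence
\[
c^{(\alpha)}_{n+p^k}\equiv\alpha^{b}c^{(\alpha)}_n=\alpha^{p^k}c^{(\alpha)}_n\pmod{p^{k+r}}.
\]
Here the congruence for $c^{(\alpha)}$ is inherited from the bound $v_p(c_{m+\ell N}-c_m)\ge v_p(N)+r$ of Step 1 applied termwise before taking limits, and $\alpha^{b}=\alpha^{p^k}$ because $\alpha^\ell=1$.

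I expect Step 3 to be the delicate part. The bookkeeping of the decomposition $p^k=bq^K+\ell N$ must ensure that all indices stay nonnegative and that the valuation of $N$ is exactly what the Lipschitz bound needs. Conceptually, $n\mapsto c_n$ extends to a $p^{-r}$-Lipschitz function on $\varprojlim\mathbb{Z}/\ell p^k\cong\mathbb{Z}/\ell\times\mathbb{Z}_p$. In that picture $c^{(\alpha)}$ is just the $\alpha$-isotypic projection for translation by $e=(1,0)=\lim q^K$, and the whole verification reduces to the decomposition $p^k=b\,e+(0,p^k)$.
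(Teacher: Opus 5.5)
Your proposal is correct and is essentially the paper's proof. Your averages $\frac1\ell\sum_{j}\alpha^{-j}c_{n+jq^K}$ are exactly the paper's $c^{(i)}_{n,k}$, which the paper writes as $A^{-1}$ applied to $(c_{n+mq^k})_{m}$, with $A$ the Vandermonde matrix of the $\ell$-th roots of unity. Property (ii) is then verified by the same cyclic-twisting argument combined with the uniform approximation $c^{(\alpha)}_m\equiv c^{(i)}_{m,K}\pmod{p^{hK+r}}$.

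The only difference is cosmetic. The paper shifts by $p^kq^k$ rather than by $bq^K$ with $b\equiv p^k \pmod{\ell}$. The resulting index gap $p^k(q^k-1)$ is then a nonnegative multiple of $\ell p^k$, so your ``shift from the other side'' adjustment is not needed.
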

\begin{proof}
    Let $\zeta$ be a primitive $\ell$-th root of unity. Note that $\zeta\in\mathcal{O}_h\subset R$

    Define
    \[
    A=
    \begin{bmatrix}
       1        & 1        & \cdots & 1            \\
       \zeta& \zeta^2 & \cdots & \zeta^{\ell} \\
       \vdots   & \vdots   &        & \vdots       \\
       \zeta^{\ell-1} & \zeta^{2(\ell-1)} & \cdots & \zeta^{\ell(\ell-1)} \\
    \end{bmatrix}.
    \]
    Then, we have 
    \[\det A=\prod_{1\le i<j\le\ell}(\zeta^j-\zeta^i).\]
    The reduction map $\mathcal{O}_{h}\rightarrow\mathbb{F}_q$ restricts to an injection $\{\zeta^i\}\rightarrow\mathbb{F}_{q}^{\times}$.
    Thus, we have $\det A\in\mathcal{O}_h^\times\subset R^\times$, and so $B=A^{-1}$ has $\mathcal{O}_h$ entries.

    For each $n\ge0$ and $k\ge 0$, let us define
    \[
    \begin{bmatrix}
        c_{n,k}^{(1)} \\
        c_{n,k}^{(2)} \\
        \vdots \\
        c_{n,k}^{\ell} \\
    \end{bmatrix}
    =B
    \begin{bmatrix}
        c_n \\
        c_{n+p^{kh}} \\
        \vdots \\
        c_{n+(\ell-1)p^{kh}} \\
    \end{bmatrix}.
    \]
    Then, by the congruence assumption of $\{c_n\}$, we obtain
    \[
    \begin{bmatrix}
        c_{n,k+1}^{(1)}-c_{n,k}^{(1)} \\
        c_{n,k+1}^{(2)}-c_{n,k}^{(2)} \\
        \vdots \\
        c_{n,k+1}^{\ell}-c_{n,k}^{\ell} \\
    \end{bmatrix}
    =B
    \begin{bmatrix}
        0 \\
        c_{n+p^{kh}+(p^h-1)p^{kh}}-c_{n+p^{kh}} \\
        \vdots \\
        c_{n+(\ell-1)p^{kh}+(\ell-1)(p^h-1)p^{kh}}-c_{n+(\ell-1)p^{kh}} \\
    \end{bmatrix}
    \equiv
    \begin{bmatrix}
        0\\
        \vdots\\
        0\\
    \end{bmatrix}
    \mod{p^{kh+r}}
    \]
    where we used $\ell\mid p^h-1$, and the congruence of vectors is understood componentwise.

    We define the sequences $\{c_n^{(\zeta^i)}\}$ by 
    \[c_n^{(\zeta^i)}=\lim_{k\rightarrow\infty}c_{n,k}^{(i)}\quad(\text{for each }n\ge0\text{ and }i=1,\cdots,\ell).\]
    Since $R$ is complete, $\{c_n^{(\zeta^1)}\}_{n=0}^{\infty},\cdots,\{c_n^{(\zeta^\ell)}\}_{n=0}^{\infty}$ are sequences in $R$.
    
    We state that this is the required family of sequences. Comparing the first components of the vectors
    \[
    \begin{bmatrix}
        c_n \\
        c_{n+p^{kh}} \\
        \vdots \\
        c_{n+(\ell-1)p^{kh}} \\
    \end{bmatrix}
    =A
    \begin{bmatrix}
        c_{n,k}^{(1)} \\
        c_{n,k}^{(2)} \\
        \vdots \\
        c_{n,k}^{(\ell)} \\
    \end{bmatrix}
    \quad(\text{for all }k,n\ge0),
    \]
    we have
    $c_n=c_{n,k}^{(1)}+\cdots+c_{n,k}^{(\ell)}$.
    Taking the limit $k\rightarrow\infty$, we obtain (i).

    Let 
    \[
    S=
    \begin{bmatrix}
       0      & 1 & 0      & \cdots & 0      \\
       \vdots & 0 & 1      & \ddots & \vdots \\
       \vdots &   & \ddots & \ddots & 0      \\
       0      &   &        & 0      & 1      \\
       1      & 0 & \cdots & \cdots & 0      \\
    \end{bmatrix}.
    \]
    Then, since $\zeta^{i(m-1)}=\zeta^{i(m+\ell-1)}$ for all $m,i\ge0$, multiplying by $S$ gives
    \[
    S^mA=
    \begin{bmatrix}
       \zeta^m & \zeta^{2m}  & \cdots & \zeta^{\ell m}            \\
       \zeta^{m+1}   & \zeta^{2(m+1)} & \cdots & \zeta^{\ell(m+1)} \\
       \vdots   & \vdots   &        & \vdots       \\
       \zeta^{m+\ell-1} & \zeta^{2(m+\ell-1)} & \cdots & \zeta^{\ell(m+\ell-1)} \\
    \end{bmatrix}\quad\text{for all }m\ge0.
    \]

    Thus, for all $k\ge0$, we have
    \begin{align*}
    A\begin{bmatrix}
        \zeta^{p^k}c_{n,k}^{(1)} \\
        \zeta^{2p^k}c_{n,k}^{(2)} \\
        \vdots \\
        \zeta^{\ell p^k}c_{n,k}^{(\ell)} \\
    \end{bmatrix}
    &=
    \begin{bmatrix}
       \zeta^{p^k} & \zeta^{2p^k}  & \cdots & \zeta^{\ell p^k}            \\
       \zeta^{p^k+1}   & \zeta^{2(p^k+1)} & \cdots & \zeta^{\ell(p^k+1)} \\
       \vdots   & \vdots   &        & \vdots       \\
       \zeta^{p^k+\ell-1} & \zeta^{2(p^k+\ell-1)} & \cdots & \zeta^{\ell(p^k+\ell-1)} \\
    \end{bmatrix}
    \begin{bmatrix}
        c_{n,k}^{(1)} \\
        c_{n,k}^{(2)} \\
        \vdots \\
        c_{n,k}^{(\ell)} \\
    \end{bmatrix}\\
    &=S^{p^k}A
    \begin{bmatrix}
        c_{n,k}^{(1)} \\
        c_{n,k}^{(2)} \\
        \vdots \\
        c_{n,k}^{(\ell)} \\
    \end{bmatrix}
    =S^{p^k}
    \begin{bmatrix}
        c_{n} \\
        c_{n+p^{kh}} \\
        c_{n+2p^{kh}} \\
        \vdots \\
        c_{n+(\ell-1)p^{kh}} \\
    \end{bmatrix}.
    \end{align*}
    Now, we have
    \begin{equation*}
        S
    \begin{bmatrix}
        c_{n+mp^{kh}} \\
        c_{n+(m+1)p^{kh}} \\
        \vdots \\
        c_{n+(m+\ell-1)p^{kh}} \\
    \end{bmatrix}
    =\begin{bmatrix}
        c_{n+(m+1)p^{kh}} \\
        \vdots \\
        c_{n+(m+\ell-1)p^{kh}} \\
        c_{n+mp^{kh}} \\
    \end{bmatrix}
    \equiv\begin{bmatrix}
        c_{n+(m+1)p^{kh}} \\
        \vdots \\
        c_{n+(m+\ell-1)p^{kh}} \\
        c_{n+(m+\ell)p^{kh}} \\
    \end{bmatrix}\mod{p^{k+r}}
    \end{equation*}
    for each $m\ge0$, and we obtain
    \[S^m\begin{bmatrix}
        c_{n} \\
        c_{n+p^{kh}} \\
        \vdots \\
        c_{n+(\ell-1)p^{kh}} \\
    \end{bmatrix}
    \equiv\begin{bmatrix}
        c_{n+mp^{kh}} \\
        c_{n+(m+1)p^{kh}} \\
        \vdots \\
        c_{n+(m+\ell-1)p^{kh}} \\
    \end{bmatrix}\mod{p^{k+r}}\]
    by induction. Thus, 
    \begin{equation*}
    A\begin{bmatrix}
        \zeta^{p^k}c_{n,k}^{(1)} \\
        \zeta^{2p^k}c_{n,k}^{(2)} \\
        \vdots \\
        \zeta^{\ell p^k}c_{n,k}^{(\ell)} \\
    \end{bmatrix}
    \equiv
    \begin{bmatrix}
        c_{n+p^kp^{kh}} \\
        c_{n+(p^k+1)p^{kh}} \\
        c_{n+(p^k+2)p^{kh}} \\
        \vdots \\
        c_{n+(p^k+\ell-1)p^{kh}} \\
    \end{bmatrix}
    \equiv
    \begin{bmatrix}
        c_{n+p^k} \\
        c_{n+p^k+p^{kh}} \\
        c_{n+p^k+2p^{kh}} \\
        \vdots \\
        c_{n+p^k+(\ell-1)p^{kh}} \\
    \end{bmatrix}
    \mod{p^{k+r}}.
    \end{equation*}
    For the last congruences, we used the fact that $p^kp^{kh}\equiv p^{k}\mod{\ell p^k}$.

    Therefore, for all $k\ge0$, we obtain
    \[
    \begin{bmatrix}
        \zeta^{p^k}c_{n,k}^{(1)} \\
        \zeta^{2p^k}c_{n,k}^{(2)} \\
        \vdots \\
        \zeta^{\ell p^k}c_{n,k}^{(\ell)} \\
    \end{bmatrix}
    \equiv B
    \begin{bmatrix}
        c_{n+p^k} \\
        c_{n+p^k+p^{kh}} \\
        c_{n+p^k+2p^{kh}} \\
        \vdots \\
        c_{n+p^k+(\ell-1)p^{kh}} \\
    \end{bmatrix}
    =\begin{bmatrix}
        c_{n+p^k,k}^{(1)} \\
        c_{n+p^k,k}^{(2)} \\
        \vdots \\
        c_{n+p^k,k}^{(\ell)} \\
    \end{bmatrix}
    \mod{p^{k+r}}.
    \]
    Since $c_n^{(\zeta^i)}\equiv c_{n,k}^{(i)}\mod{p^{kh+r}}$ by definition, (ii) also holds.
\end{proof}

\begin{lemma}
\label{c'_n_and_d_n_lemma}
    Let $r\in\mathbb{R}$ and $R\subset\mathbb{C}_p$ be a complete subring.
    
    Let $\{c_n\}_{n=0}^{\infty}\subset R$ be a sequence satisfying the condition: for all $k\in\mathbb{Z}_{\ge0}$, there exists $n_k\in\mathbb{Z}_{\ge0}$ such that
    \[c_{n+\ell p^k}\equiv c_n\mod{p^{k+r}}\quad\text{for all }n\ge n_k.\]

    Then, there exists a sequence $\{d_n\}_{n=0}^{\infty}\subset R$ such that 

    (i) For any $k\in\mathbb{Z}_{\ge0}$ and $n\ge n_k$, $d_n\equiv 0\mod{p^{k+r}}$. In particular, $v_p(d_n)\rightarrow\infty~(n\rightarrow\infty)$.

    (ii) $c_{n+\ell p^k}-d_{n+\ell p^k}\equiv c_n-d_n\mod{p^{k+r}}$ for all $k,n\in\mathbb{Z}_{\ge0}$.
\end{lemma}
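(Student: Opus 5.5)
The plan is to build the "periodic part" $c'_n$ of $\{c_n\}$ as a $p$-adic limit along indices that go to infinity within the residue class of $n$ modulo $\ell p^K$, and then set $d_n=c_n-c'_n$. The only subtlety is that the thresholds $n_k$ may grow arbitrarily fast in $k$, so the naive choice $c'_n=\lim_k c_{n+\ell p^k}$ need not be Cauchy: the index $n+\ell p^k$ may never exceed $n_k$. I expect getting around this to be the main obstacle, and I handle it by pushing the indices far enough out.

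\textbf{Preliminary observation.} Put $N_k=\max(n_0,\dots,n_k)$, so that $N_k$ is nondecreasing and the hypothesis holds for all $n\ge N_k$. Iterating the hypothesis gives the following: if $m,m'\ge N_k$ and $\ell p^k\mid m-m'$, then $c_m\equiv c_{m'}\pmod{p^{k+r}}$. Indeed, every intermediate index $\min(m,m')+j\ell p^k$ is $\ge N_k$, and congruences modulo $p^{k+r}$ chain by the strong triangle inequality.

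\textbf{Construction of $c'_n$.} For each $n$ and $K$, choose an index $e_n(K)=n+\ell p^K t$ with $t\in\mathbb{Z}_{\ge0}$ so large that $e_n(K)\ge N_K$. Then:
\begin{enumerate}
    \item For $K'\ge K\ge k$, both $e_n(K)$ and $e_n(K')$ are $\ge N_k$, and their difference is divisible by $\ell p^K$, hence by $\ell p^k$.
    \item By the observation, $c_{e_n(K)}\equiv c_{e_n(K')}\pmod{p^{k+r}}$.
\end{enumerate}
So $\{c_{e_n(K)}\}_K$ is Cauchy. Since $R$ is complete, $c'_n:=\lim_{K\to\infty}c_{e_n(K)}\in R$ exists. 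Define $d_n=c_n-c'_n\in R$.

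\textbf{Verifying (i).} Fix $k$ and $n\ge n_k$. For every $K\ge k$, the indices $n$ and $e_n(K)$ are both $\ge n_k$ (using $N_K\ge n_k$), and they differ by a multiple of $\ell p^k$. Applying the hypothesis at level $k$ repeatedly, exactly as in the observation but with threshold $n_k$, gives $c_{e_n(K)}\equiv c_n\pmod{p^{k+r}}$. Passing to the limit, which preserves congruences because $\{x: v_p(x)\ge k+r\}$ is closed, yields $d_n\equiv 0\pmod{p^{k+r}}$. Consequently, for $n\ge N_k$ we have $v_p(d_n)\ge k+r$, so $v_p(d_n)\to\infty$.

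\textbf{Verifying (ii).} We need $c'_{n+\ell p^k}\equiv c'_n\pmod{p^{k+r}}$ for all $k,n\ge0$. Take $K\ge k$ and compare $e_{n+\ell p^k}(K)$ with $e_n(K)$. Both are $\ge N_K\ge N_k$, and their difference is $\ell p^k$ plus a multiple of $\ell p^K$, hence divisible by $\ell p^k$. The observation gives $c_{e_{n+\ell p^k}(K)}\equiv c_{e_n(K)}\pmod{p^{k+r}}$, and letting $K\to\infty$ gives the claim.

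As a consistency check, the construction does not depend on the choice of the $t$'s: two admissible choices at level $K$ give congruent values modulo $p^{K+r}$ by the observation. This independence is not needed for the lemma.
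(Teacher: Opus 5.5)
Your proof is correct and follows essentially the same route as the paper. You define $c'_n$ as a $p$-adic limit of the values $c_m$ along indices $m\equiv n \pmod{\ell p^K}$ tending to infinity, put $d_n=c_n-c'_n$, and obtain (i) and (ii) by passing congruences modulo $p^{k+r}$ to the limit.

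The obstacle you anticipate does not actually arise. The paper uses $c'_n=\lim_{L\to\infty}c_{n+\ell p^L}$ directly, and this sequence is Cauchy: for precision $p^{k+r}$ you only need the level-$k$ hypothesis for a \emph{fixed} $k$. That hypothesis covers the step from $c_{n+\ell p^L}$ to $c_{n+\ell p^{L+1}}$ as soon as $L\ge k$ and $\ell p^L\ge n_k$. So your padded indices $e_n(K)$ and the thresholds $N_K$ are harmless but unnecessary.
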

\begin{proof}
    For each $n\ge0$, consider the sequence $\{c_{n+\ell p^{k}}\}_{k=0}^{\infty}$. Then, for any $k\in\mathbb{Z}_{\ge0}$, we have
    \begin{equation}
    \label{c_n_Cauchy_cong}
    c_{n+\ell p^{L+1}}=c_{n+\ell p^L+\ell(p-1)p^L}\equiv c_{n+\ell p^L}\mod{p^{k+r}}
    \end{equation}
    whenever $L$ is sufficiently large, say $L\ge \min\{k,\log_p(n_k/\ell)\}$. 
    
    Therefore, $\{c_{n+\ell p^k}\}_{k=0}^{\infty}$ is a Cauchy sequence in $R$.

    Define 
    \[c'_n=\lim_{k\rightarrow\infty}c_{n+\ell p^k}\quad\quad\text{and}\quad\quad d_n=c_n-c'_n.\]
    
    Then, for each $k\in\mathbb{Z}_{\ge0}$, using $L\in\mathbb{Z}_{\ge0}$ equal to or greater than $\min\{k,\log_p(n_k/\ell)\}$, we have
    \begin{align*}
        c_n-d_n&=c'_n\\
        &\equiv c_{n+\ell p^L}\\
        &\equiv c_{n+\ell p^k+\ell p^L}\\
        &\equiv c'_{n+\ell p^k}\\
        &=c_{n+\ell p^k}-d_{n+\ell p^k}\mod{p^{k+r}}.
    \end{align*}

    Moreover, if $n\ge n_k$, the congruence \eqref{c_n_Cauchy_cong} holds for all $L\ge k$, and we have
    \[c'_n\equiv c_{n+\ell p^k}\equiv c_n\mod{p^{k+r}}\]
    by the definition of $c'_n$. So $\{d_n\}$ satisfies (i) too.
\end{proof}

\begin{proof}[Proof of Proposition~\ref{internal_direct_sum_prop}]
    By Lemma~\ref{decomposition_lemma} and Lemma~\ref{c'_n_and_d_n_lemma}, we have
    \[M'_r(R)=\sum_{\alpha^{\ell}=1}M_r^{(\alpha)}(R),\quad M_r(R)=M'_r(R)+N(R).\]

    It remains to show the uniqueness of the decomposition.
    Let $\{\alpha_1,\cdots,\alpha_{\ell}\}=\{x\in\mathcal{O}_h|x^{\ell}=1\}$.
    Let $\{d_n\}\in N(R)$, $\{c_n^{(\alpha_i)}\}\in M_r^{(\alpha_i)}(R)$ for each $i=1,\cdots,\ell$, and assume
    \[\{c_n^{(\alpha_1)}\}+\cdots+\{c_n^{(\alpha_{\ell})}\}+\{d_n\}=\{0\}_{n=0}^{\infty}\quad(\text{as a sequence}).\]

    Since $v_p(d_n)\rightarrow\infty~(n\rightarrow\infty)$, for each $k\in\mathbb{Z}_{\ge0}$, we have 
    \begin{equation}
    \label{c_n_1+cdots+c_n_q-1equiv0_equ}
        c_n^{(\alpha_1)}+\cdots+c_n^{(\alpha_{\ell})}\equiv 0\mod{p^{k+r}}
    \end{equation}
    for sufficiently large $n$.
    Since $\{c_n^{(\alpha_1)}\}+\cdots+\{c_n^{(\alpha_{\ell})}\}\in M'_r(R)$ is periodic modulo $p^{k+r}$, the congruence \eqref{c_n_1+cdots+c_n_q-1equiv0_equ} holds for all $n\in\mathbb{Z}_{\ge0}$. Since $k$ was arbitrary, we obtain
    \[\{c_n^{(\alpha_1)}\}+\cdots+\{c_n^{(\alpha_{\ell})}\}=\{0\},\quad\{d_n\}=\{0\}.\]

    Let $S(m)$ be a shift operator defined by $S(m)(\{c_n\}_{n=0}^{\infty})=\{c_{n+m}\}_{n=0}^{\infty}$.
    Then, applying $S(p^{mh})-\alpha_1$, we have
    \[(\alpha_2-\alpha_1)\{c_n^{(\alpha_2)}\}+\cdots+(\alpha_{\ell}-\alpha_1)\{c_n^{(\alpha_{\ell})}\}\equiv\{0\}\mod{p^{mh+r}}\]
    for all $m\ge0$, where we used the fact that $\alpha_i^{p^{mh}}=\alpha_i$ because $\ell\mid p^h-1$. Thus, we obtain
    \[(\alpha_2-\alpha_1)\{c_n^{(\alpha_2)}\}+\cdots+(\alpha_{\ell}-\alpha_1)\{c_n^{(\alpha_{\ell})}\}=\{0\}.\]
    Repeating this for each $S(p^{mh})-\alpha_i$, and we finally have
    \[(\alpha_{\ell}-\alpha_1)\cdots(\alpha_{\ell}-\alpha_{\ell-1})\{c_n^{(\alpha_{\ell})}\}=\{0\}.\]
    Since $\alpha_i$'s are distinct, we have $\{c_n^{(\alpha_{\ell})}\}=\{0\}$.
    We have $\{c_n^{(\alpha_1)}\}=\cdots=\{c_n^{(\alpha_{\ell-1})}\}=\{0\}$ by induction.
\end{proof}

\subsection{Reformulation via Power Series}
\label{Reformulation_via_power_series_subsection}
As we mentioned in Section~1, there is no reason to restrict $R$ for applications. However, since a technical distinction arises depending on whether or not $R$ contains an $x$ satisfying $v_p(x)<0$, we consider below the cases $R=\mathbb C_p$ and $R=\mathcal O_{\mathbb C_p}$.

We obtained 
\[M_r(R)=\left(\bigoplus_{\alpha^{\ell}=1}M_r^{(\alpha)}(R)\right)\oplus N(R)\]
in the previous subsection. It is easy to see that 
\begin{align*}
    \{c_n\}_{n=0}^{\infty}\in N(\mathbb{C}_p)&\Leftrightarrow \sum_{n=0}^{\infty}c_n\frac{t^n}{n!}\in\mathbb{C}_p\langle t|0\rangle,\\
    \{c_n\}_{n=0}^{\infty}\in N(\mathcal{O}_{\mathbb{C}_p})&\Leftrightarrow \sum_{n=0}^{\infty}c_n\frac{t^n}{n!}\in\mathbb{C}_p\langle t|0\rangle\cap\mathbb{C}_p\langle t|\ge0\rangle.
\end{align*}

In this subsection, we establish 
\begin{align}
    \{c_n\}_{n=0}^{\infty}\in M_r^{(\alpha)}(\mathbb{C}_p)&\Leftrightarrow \sum_{n=0}^{\infty}c_n\frac{t^n}{n!}\in e^{\alpha t}\mathbb{C}_p\langle t|\ge\lfloor\log_pn\rfloor+r\rangle,\label{M_r^alpha(Cp)_gen_func_equ}\\
    \{c_n\}_{n=0}^{\infty}\in M_r^{(\alpha)}(\mathcal{O}_{\mathbb{C}_p})&\Leftrightarrow \sum_{n=0}^{\infty}c_n\frac{t^n}{n!}\in e^{\alpha t}\mathbb{C}_p\langle t|\ge\Theta_r\rangle\label{M_r^alpha(O_Cp)_gen_func_equ}
\end{align}
and lead \eqref{R_r(C_p)_decomposition} and \eqref{R_r(O_C_p)_decomposition} as consequences.

\begin{proposition}
\label{Basic_prop}
    Let 
    \[F(t)=\sum_{n=0}^{\infty}c_n\frac{t^n}{n!}\]
    be the generating function of a sequence $\{c_n\}_{n=0}^{\infty}\subset\mathbb{C}_p$.

    Let $\alpha\in\mathbb{C}_p$ such that $v_p(\alpha)=0$.

    (1) For any $r\in\mathbb{R}$, the following conditions are equivalent.

    \begin{enumerate}[label=(\alph*)]
      \item For any integer $r\ge0$ and $n\ge0$, $c_{n+p^{k}}\equiv\alpha^{p^k} c_n\pmod{p^{k+r}}$. 
      
      \item The function $f:\mathbb{Z}_{\ge0}\rightarrow\mathbb{C}_p;n\mapsto\alpha^{-n}c_n$ can be extended to a $p^{-r}$-Lipschitz continuous function on $\mathbb{Z}_p$.
      
      \item $e^{-\alpha t}F(t)\in\mathbb{C}_p\langle t|\ge\lfloor\log_pn\rfloor+r\rangle$.
    \end{enumerate}

    (2) For any $s\in\mathbb{Z}_{\ge0}$, the function $f:\mathbb{Z}_{\ge0}\rightarrow\mathbb{C}_p;n\mapsto\alpha^{-n}c_n$ can be extended to a function on $\mathbb{Z}_p$, which is locally analytic of order $s$, if and only if $e^{-\alpha t}F(t)\in\mathbb{C}_p\langle t|v_p(\lfloor n/p^s\rfloor!)\rangle$.
\end{proposition}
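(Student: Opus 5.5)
The proof rests on one computation: the generating function of $\alpha^{-n}$-twisted sequences and Mahler coefficients. Put $f(n)=\alpha^{-n}c_n$ on $\mathbb{Z}_{\ge0}$. Then
\[e^{-\alpha t}F(t)=\sum_{n\ge0}\frac{t^n}{n!}\sum_{m=0}^{n}\binom{n}{m}(-\alpha)^{n-m}c_m=\sum_{n\ge0}\alpha^n\Bigl(\sum_{m=0}^n\binom{n}{m}(-1)^{n-m}f(m)\Bigr)\frac{t^n}{n!}.\]
So the $n$-th coefficient $b_n$ of $e^{-\alpha t}F(t)$, in the normalization $b_n t^n/n!$, equals $\alpha^n(\Delta^n f)(0)$. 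Since $v_p(\alpha)=0$, we get $v_p(b_n)=v_p((\Delta^nf)(0))$. This identity is the first step. After it, everything reduces to Theorem~\ref{smooth_coeffi_thm} applied to the numbers $a_n=(\Delta^nf)(0)$, together with Remark~\ref{converse_rmk}(2) to guarantee that $f$ extends continuously at all.

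For (1), I would prove (b)$\Leftrightarrow$(c) first. If (b) holds, the extension is continuous with Mahler coefficients $a_n=(\Delta^nf)(0)$ by Theorem~\ref{Mahler_expansion_thm}. Theorem~\ref{smooth_coeffi_thm}(1) then gives $v_p(a_n)\ge\lfloor\log_pn\rfloor+r$ for all $n$, which is (c) by the identity. Conversely, assume (c). Then $v_p(a_n)\ge\lfloor\log_pn\rfloor+r\to\infty$ for $n\ge1$. Remark~\ref{converse_rmk}(2) gives a continuous extension whose Mahler coefficients are the $a_n$, and Theorem~\ref{smooth_coeffi_thm}(1) gives the Lipschitz property. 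The $n=0$ term imposes no condition, since $\lfloor\log_p 0\rfloor=-\infty$.

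Next I would prove (a)$\Leftrightarrow$(b), reading (a) as quantified over all $k,n\ge0$. The condition $c_{n+p^k}\equiv\alpha^{p^k}c_n$ is equivalent to $v_p(f(n+p^k)-f(n))\ge k+r$, after multiplying by the unit $\alpha^{-n-p^k}$. Lipschitz continuity implies this directly. For the converse, take $x\ne y$ in $\mathbb{Z}_{\ge0}$ with $v_p(x-y)=k$, say $y>x$, and write $y-x=\sum_j m_jp^{k_j}$ with every $k_j\ge k$ (its base-$p$ digits). Telescoping through intermediate nonnegative integers gives $v_p(f(y)-f(x))\ge k+r$. Hence $f$ is uniformly continuous on the dense set $\mathbb{Z}_{\ge0}$ and extends to $\mathbb{Z}_p$, and the Lipschitz inequality passes to the limit.

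For (2), the argument is identical. Extendability to a locally analytic function of order $s$ means $f$ extends continuously with Mahler coefficients $a_n$, which by Theorem~\ref{smooth_coeffi_thm}(2) is equivalent to $v_p(a_n)-v_p(\lfloor n/p^s\rfloor!)\to\infty$. By the identity this is exactly membership in $\mathbb{C}_p\langle t|v_p(\lfloor n/p^s\rfloor!)\rangle$. In the converse direction, continuity of the extension again follows from Remark~\ref{converse_rmk}(2), since $v_p(a_n)\to\infty$. The only real work is the generating-function identity and the telescoping argument in (a)$\Rightarrow$(b). I expect neither to be an obstacle. The only care needed is the bookkeeping for $n=0$ and for the unit $\alpha^n$.
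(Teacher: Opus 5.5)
Your proposal is correct and follows the paper's proof. The paper also identifies $\alpha^{-n}b_n$ as the Mahler coefficients of $n\mapsto\alpha^{-n}c_n$, which yields (b)$\Leftrightarrow$(c) and part (2) via Theorem~\ref{smooth_coeffi_thm} and Remark~\ref{converse_rmk}, and it gets (a)$\Leftrightarrow$(b) by dividing by the unit $\alpha^{n+p^k}$ and citing density of $\mathbb{Z}_{\ge0}$ in $\mathbb{Z}_p$; your telescoping over base-$p$ digits just spells out that last step, and reading (a) as quantified over all $k,n\ge0$ is the intended meaning.
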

\begin{proof}
    Let
    \[e^{-\alpha t}F(t)=\sum_{n=0}^{\infty}b_n\frac{t^n}{n!}.\]
    Then, since
    \begin{align*}
        e^{-\alpha t}F(t)&=\left(\sum_{n=0}^\infty(-\alpha)^n\frac{t^n}{n!}\right)\left(\sum_{n=0}^\infty c_n\frac{t^n}{n!}\right)\\
        &=\sum_{n=0}^{\infty}\left(\sum_{m=0}^{n}\binom{n}{m}(-\alpha)^{n-m}c_m\right)\frac{t^n}{n!},
    \end{align*}
    we have
    \[\alpha^{-n}b_n=\sum_{m=0}^{n}\binom{n}{m}(-1)^{n-m}\frac{c_m}{\alpha^m}.\]
    Therefore, the sequence $\{\alpha^{-n}b_n\}$ is the sequence of Mahler coefficients of the function $f:n\mapsto\alpha^{-n}c_n$.
    Because $v_p(\alpha^{-n}b_n)=v_p(b_n)$, (2) and the equivalence of (b) and (c) of (1) follow from Theorem~\ref{smooth_coeffi_thm} and Remark~\ref{converse_rmk}.

    We show the equivalence of (a) and (b) of (1). Since $v_p(\alpha)=0$, condition (a) is equivalent to the congruence
    \[\frac{c_{n+p^k}}{\alpha^{n+p^k}}\equiv\frac{c_n}{\alpha^n}\mod{p^{k+r}}\quad\text{for all }r,n\ge0.\]
    Because $\mathbb{Z}_{\ge0}$ is dense in $\mathbb{Z}_p$, this is equivalent to condition (b).
\end{proof}

Proposition~\ref{Basic_prop}(1) is precisely the statement \eqref{M_r^alpha(Cp)_gen_func_equ}. 

Assume $\{c_n\}\in M_r^{(\alpha)}(\mathbb{C}_p)$ with $\alpha\in\mathcal{O}_{\mathbb{C}_p}^{\times}$. Then, the Mahler coefficient of degree $0$ of $f$ in condition (b) of Proposition~\ref{Basic_prop}(1) is $f(0)=c_0$. Therefore, by Remark \ref{a_0(f)_rmk}, we have
\[\{c_n\}\subset\mathcal{O}_{\mathbb{C}_p}\Leftrightarrow\{\alpha^{-n}c_n\}\subset\mathcal{O}_{\mathbb{C}_p}\Leftrightarrow f(\mathbb{Z}_p)\subset\mathcal{O}_{\mathbb{C}_p}\Leftrightarrow v_p(c_0)\ge 0.\]
Note that 
\[\Bigl\{F(t)\in\mathbb{C}_p\langle t|\ge\lfloor\log_pn\rfloor+r\rangle~\Bigm|~v_p(F(0))\ge0~\Bigr\}=\mathbb{C}_p\langle t|\ge\Theta_r\rangle\]
by the definition of $\Theta_r$. Thus, we obtain \eqref{M_r^alpha(O_Cp)_gen_func_equ} too. 

Combining them with \eqref{M_r(R)=M'_r(R)+N(R)_equ} and \eqref{M'_r(R)_Decomposition_equ}, we have \eqref{R_r(C_p)_decomposition} and \eqref{R_r(O_C_p)_decomposition}; that is, 
\begin{align*}
    &\mathcal{M}_r(\mathbb{C}_p)=\left(\bigoplus_{\alpha^{\ell}=1}e^{\alpha t}\mathbb{C}_p\langle t|\ge\lfloor\log_pn\rfloor+r\rangle\right)\oplus\mathbb{C}_p\langle t|0\rangle, \\
    &\mathcal{M}_r(\mathcal{O}_{\mathbb{C}_p})=\left(\bigoplus_{\alpha^{\ell}=1}e^{\alpha t}\mathbb{C}_p\langle t|\ge\Theta_r\rangle\right)\oplus\Bigl(\mathbb{C}_p\langle t|0\rangle\cap\mathbb{C}_p\langle t|\ge0\rangle\Bigr)
\end{align*}
as $\mathcal{O}_{\mathbb{C}_p}$-modules. (Note that $\mathbb{C}_p\langle t|\Theta\rangle$ and $\mathbb{C}_p\langle t|\ge\Theta\rangle$ are $\mathcal{O}_{\mathbb{C}_p}$-modules. Indeed, they contain $0$ by $v_p(0)=\infty$, and they are closed under addition because $v_p$ is a non-Archimedean valuation, and they are also closed under scalar multiplication because $v_p(x)\ge0$ for all $x\in\mathcal{O}_{\mathbb{C}_p}$.)

Another consequence of Proposition~\ref{Basic_prop} concerns the smoothness of $p$-adic interpolating functions for the sequences $\{c'_n\}\in M'_r(R)~(R=\mathbb{C}_p, \mathcal{O}_{\mathbb{C}_p})$.

Let $\{c'_n\}\in M'_r(R)$. Then, for each $i\in\{0,1,\cdots,\ell-1\}$, the sequence $\{c'_{i,n}\}=\{c'_{i+\ell n}\}$ satisfies
\[c'_{i,n+p^k}\equiv c'_{i,n}\mod{p^{k+r}}\]
for all $k,n\in\mathbb{Z}_{\ge0}$. Thus, we can consider $p$-adic interpolation
\begin{equation}
\label{f_i_def}
    f_i:\mathbb{Z}_p\rightarrow\mathbb{C}_p;n\mapsto c'_{i,n}=c'_{i+\ell n}.
\end{equation}

By using Proposition~\ref{Basic_prop}, we have the following proposition.
\begin{proposition}
\label{smoothness_of_interpolation_prop}
    Let $r\in\mathbb{R}, s\in\mathbb{Z}_{\ge0}$ and $\{c'_n\}\in M'_r(\mathbb{C}_p)$, write
    \[\{c'_n\}=\sum_{\alpha^{\ell}=1}\{c_n^{(\alpha)}\}\quad\text{with }\{c_n^{(\alpha)}\}\in M_r^{(\alpha)}(\mathbb{C}_p).\]

    Set $f_\alpha:\mathbb{Z}_p\rightarrow\mathbb{C}_p$ to be a $p$-adic interpolation of 
    \[\mathbb{Z}_{\ge0}\rightarrow\mathbb{C}_p;n\mapsto \alpha^{-n}c_n^{(\alpha)}.\]

    Then, all of $\{f_i\}_{i=0,1,\cdots,\ell-1}$ (defined by \eqref{f_i_def}) are locally analytic of order $s$ if and only if all of $\{f_{\alpha}\}$ are locally analytic of order $s$.
\end{proposition}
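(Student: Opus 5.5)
We express each $f_i$ in terms of the $f_\alpha$ and conversely, and then use that local analyticity of order $s$ is preserved under finite $\mathbb{C}_p$-linear combinations and under an affine change of variable $x\mapsto i+\ell x$. Here $p\nmid\ell$, so $\ell\in\mathbb{Z}_p^\times$ and this map sends $a+p^s\mathbb{Z}_p$ onto $(i+\ell a)+p^s\mathbb{Z}_p$. The extra ingredient is multiplication by the function $x\mapsto\alpha^{x}$, which is not obviously locally analytic. We avoid it by restricting to residue classes mod $\ell$, where $\alpha^{i+\ell n}=\alpha^i$ is constant.

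For the direction ``$f_\alpha$ analytic $\Rightarrow f_i$ analytic'', first note that $c'_{i+\ell n}=\sum_\alpha c^{(\alpha)}_{i+\ell n}=\sum_\alpha \alpha^{i+\ell n}f_\alpha(i+\ell n)=\sum_\alpha\alpha^i f_\alpha(i+\ell n)$ for $n\in\mathbb{Z}_{\ge0}$, since $\alpha^\ell=1$. Both sides are continuous in $n$: the left side is $f_i$, and the right side is a composition of continuous maps. By density of $\mathbb{Z}_{\ge0}$ in $\mathbb{Z}_p$ we get $f_i(x)=\sum_\alpha\alpha^i f_\alpha(i+\ell x)$ on $\mathbb{Z}_p$. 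If $f_\alpha$ has an expansion $\sum_m a_m(y-b)^m$ converging on $b+p^s\mathbb{Z}_p$ with $b=i+\ell a$, then substituting $y=i+\ell x$ gives $\sum_m a_m\ell^m(x-a)^m$. This converges on $a+p^s\mathbb{Z}_p$ because $|\ell|_p=1$. Finite sums preserve the property, so each $f_i$ is locally analytic of order $s$.

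For the converse, we need to recover $f_\alpha$ from the $f_i$. The key point is that the decomposition of Lemma~\ref{decomposition_lemma} is explicit, because the components are obtained by a discrete Fourier transform over residues. I will show directly that $\alpha^{-n}c^{(\alpha)}_n$ restricted to $n\equiv j\pmod{\ell}$ is a fixed linear combination of the values $f_i$. Fix $j\in\{0,\dots,\ell-1\}$ and write $n=j+\ell m$. Set $g_{\alpha,j}(m)=f_\alpha(j+\ell m)$. Using (ii) of the definition of $M_r^{(\alpha)}$ with the shifts $p^{kh}$, congruence to $1$ mod $\ell$ gives $c^{(\alpha)}_{n+p^{kh}}\equiv\alpha c^{(\alpha)}_n$. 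Since $p^{kh}\to0$ $p$-adically, $p^{kh}=1+\ell u_k$ with $u_k\to -1/\ell$ in $\mathbb{Z}_p$. Hence $c'_{n+t p^{kh}}\to\sum_\alpha\alpha^t c^{(\alpha)}_n$ as $k\to\infty$, i.e. $\sum_\alpha \alpha^{t}\alpha^{j}g_{\alpha,j}(m)=\lim_k f_{j+t\bmod\ell}$ evaluated at the appropriate point. Concretely, the point is obtained from $(j+\ell m+t p^{kh}-i)/\ell$, where $i\equiv j+t\pmod\ell$, and it converges in $\mathbb{Z}_p$ to $m+(t-(i-j))/\ell\cdot$ (an explicit affine shift) as $k\to\infty$. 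By continuity of $f_i$, each $\sum_\alpha\alpha^{t+j}g_{\alpha,j}(m)$ equals $f_{i(t)}(m+\beta_t)$ for explicit constants $\beta_t\in\mathbb{Z}_p$. Inverting the Vandermonde matrix $A$, whose inverse $B$ has entries in $\mathcal{O}_h$, expresses each $g_{\alpha,j}$ as a finite combination of translates of the $f_i$. Translates by elements of $\mathbb{Z}_p$ preserve local analyticity of order $s$, since $p^s\mathbb{Z}_p$-cosets map to cosets. Therefore each $g_{\alpha,j}$ is locally analytic of order $s$.

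Finally, $f_\alpha$ is assembled from $g_{\alpha,j}$ on the disjoint pieces $j+\ell\mathbb{Z}_p$. However, $\ell\mathbb{Z}_p=\mathbb{Z}_p$, so these pieces are not disjoint, and this is the main obstacle. The fix is to note that all the $g_{\alpha,j}$ describe the same function: $f_\alpha(y)=g_{\alpha,0}(y/\ell)$ for all $y\in\mathbb{Z}_p$, by continuity and density. So it suffices to take $j=0$, and then $f_\alpha(y)=g_{\alpha,0}(\ell^{-1}y)$ is locally analytic of order $s$ by the same unit-affine substitution argument as before. The remaining care is in computing the limit points $\beta_t$ correctly and checking that continuity justifies passing to the limit inside $f_i$.
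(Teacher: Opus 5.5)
Your proof is correct and is essentially the paper's. The forward direction is identical, and after your reduction to $j=0$ the converse rests on the same identity $\sum_{\alpha}\alpha^{i}f_\alpha(x)=f_i\bigl((x-i)/\ell\bigr)$, followed by inverting the Vandermonde matrix $(\alpha^{i})$.

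The limit over $p^{kh}$ and the residue-class ``obstacle'' are unnecessary. Your first identity $f_i(x)=\sum_\alpha\alpha^{i}f_\alpha(i+\ell x)$ already holds on all of $\mathbb{Z}_p$, so you can substitute $x\mapsto (x-i)/\ell$ directly, as the paper does. Note also that the limit point there is $m+(j-i)/\ell$; the expression you wrote, $m+(j+t-i)/\ell$, is the $k=0$ point, not the limit.
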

\begin{proof}
Fix $i\in\{0,1,\cdots,\ell-1\}$ and consider $p$-adic interpolations 
\[f_{i,\alpha}:\mathbb{Z}_p\rightarrow\mathbb{C}_p; n\mapsto c_{i+\ell n}^{(\alpha)}.\]
Then, since $\alpha^{\ell}=1$, and $\mathbb{Z}_{\ge0}$ is dense in $\mathbb{Z}_p$, we have
\[f_{i,\alpha}(x)=\alpha^if_{\alpha}(i+\ell x)\quad(x\in\mathbb{Z}_p).\]
Therefore,
\[f_i(x)=\sum_{\alpha}\alpha^if_\alpha(i+\ell x).\]
Since $\ell\in\mathbb{Z}_p^\times$, this means that all of $\{f_i\}_i$ are locally analytic of order $s$ if all of $\{f_\alpha\}_\alpha$ are locally analytic of order $s$.

Conversely, we have
\[f_i\left(\frac{x-i}{\ell}\right)=\sum_{\alpha}\alpha^if_\alpha(x).\] 
Since the matrix $A=(\alpha^i)_{\alpha,i}$ is invertible (recall the argument in the proof of Lemma~\ref{decomposition_lemma}), we can write as 
\[f_\alpha(x)=\sum_{i}b_{\alpha,i}f_i\left(\frac{x-i}{q-1}\right)\]
with some $\{b_{\alpha,i}\}\subset\mathcal{O}_h$. Hence, all of $\{f_\alpha\}_\alpha$ are locally analytic of order $s$ if all of $\{f_i\}_i$ are locally analytic of order $s$.
\end{proof}

By Proposition~\ref{Basic_prop}, for each $\ell$-th root of unity $\alpha$, $f_\alpha~;n\mapsto c_n^{(\alpha)}$ with $\{c_n^{(\alpha)}\}\in M_r^{(\alpha)}(\mathbb{C}_p)$ is locally analytic of order $s$ if and only if
\[\sum_{n=0}^{\infty}c_n^{(\alpha)}\frac{t^n}{n!}\in e^{\alpha t}\Bigl(\mathbb{C}_p\langle t|\ge\lfloor\log_pn\rfloor+r\rangle\cap\mathbb{C}_p\langle t|v_p(\lfloor n/p^s\rfloor!)\rangle\Bigr).\]

Thus, Proposition~\ref{smoothness_of_interpolation_prop} gives 
\[\mathcal{M}_{r,s}^{la}(\mathbb{C}_p)=\bigoplus_{\alpha^{\ell}=1}e^{\alpha t}\Bigl(\mathbb{C}_p\langle t|\ge\lfloor\log_pn\rfloor+r\rangle\cap\mathbb{C}_p\langle t|v_p(\lfloor n/p^s\rfloor!)\rangle\Bigr)\]
as a $\mathcal{O}_{\mathbb{C}_p}$-module.

Again, because for $\{c_n^{(\alpha)}\}\in\mathcal{M}_r^{(\alpha)}(\mathbb{C}_p)$, $\{c_n^{(\alpha)}\}\subset\mathcal{O}_{\mathbb{C}_p}$ if and only if 
\[\sum_{n=0}^{\infty}c_n^{(\alpha)}\frac{t^n}{n!}\in\mathbb{C}_p\langle t|\ge\Theta_r\rangle,\]
we also have
\[\mathcal{M}_{r,s}^{la}(\mathcal{O}_{\mathbb{C}_p})=\bigoplus_{\alpha^{\ell}=1}e^{\alpha t}\Bigl(\mathbb{C}_p\langle t|\ge\Theta_r\rangle\cap\mathbb{C}_p\langle t|v_p(\lfloor n/p^s\rfloor!)\rangle\Bigr)\]
as a $\mathcal{O}_{\mathbb{C}_p}$-module.

\section{Structures of Spaces of Generating Functions}
In this section, we investigate an algebraic structure of $\mathbb{C}_p\langle t|\Theta\rangle$ and $\mathbb{C}_p\langle T|\ge\Theta\rangle$ with some specific $\Theta:\mathbb{Z}_{\ge0}\rightarrow\mathbb{R}\cup\{\pm\infty\}$ that are of particular interest to us. This ultimately allows us to show that $\mathcal{R}_r$ (defined by \eqref{R_r_def(R_r_alpha_decomposition_equ)}) forms a ring and to establish \eqref{R_r_rephrase_eq} and \eqref{R_r_decomposition_equ}.

\subsection{Properties of specific $\Theta$}
The propositions in this section require $\Theta:\mathbb{Z}_{\ge0}\rightarrow\mathbb{R}\cup\{\pm\infty\}$ to satisfy certain conditions so that $\mathbb{C}_p\langle t|\Theta\rangle$ or $\mathbb{C}_p\langle t|\ge\Theta\rangle$ will have good properties. We begin by showing that the specific $\Theta$s we are interested in (for example, $\Theta_r$ or $\Theta(n)=n/p^s(p-1)-v_p(n!)$) satisfy the conditions required for the result below. 

\begin{lemma}
\label{concrete_Theta_satisfying_condition_lemma}
    (1) Let $\Theta(n)=v_p(\lfloor n/p^s\rfloor!)$ with some $s\in\mathbb{Z}_{\ge0}$.

    Then, we have
    \begin{enumerate}
        \item[(A)] $\Theta(m)+\Theta(n-m)+v_p(\binom{n}{m})\ge\Theta(n)$ for all $n\ge m\ge1$,
    \end{enumerate}
    Moreover, if $p+s\ge3$,
    \begin{enumerate}
        \item[(C')] $\mathbb{C}_p\langle t|\ge n\rangle\subset\mathbb{C}_p\langle t|\Theta\rangle$.
    \end{enumerate}
    
    (2) Let $\Theta:\mathbb{Z}_{\ge0}\rightarrow\mathbb{R}\cup\{\pm\infty\}$ be one of the following:
    \begin{enumerate}[label=(\alph*)]
        \item $\Theta=\Theta_r$ with some $r\in[0,1]$;
        \item $\Theta(n)=Cn$ with a constant $0<C\le1$;
        \item $\Theta(n)=v_p(n!)$.
    \end{enumerate}

    Then, we have
    \begin{enumerate}[label=(\Alph*)]
        \item $\Theta(m)+\Theta(n-m)+v_p(\binom{n}{m})\ge\Theta(n)$ for all $n\ge m\ge1$,
        \item $\Theta(0)=0$,
        \item $\mathbb{C}_p\langle t|\ge n\rangle\subset\mathbb{C}_p\langle t|\ge\Theta\rangle$,
        \item $\lim_{n\rightarrow\infty}\Theta(n)=\infty$.
    \end{enumerate}
\end{lemma}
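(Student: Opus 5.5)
We prove each property by direct computation with Legendre's formula (Proposition~\ref{val_of_n!_prop}) and Kummer's theorem (Proposition~\ref{val_of_binom_coeff_prop}). The common tool is the form of Legendre's formula $v_p(N!)=\sum_{j\ge1}\lfloor N/p^j\rfloor$. Iterating $\lfloor\lfloor x\rfloor/p^j\rfloor=\lfloor x/p^j\rfloor$, it gives
\[v_p\Bigl(\Bigl\lfloor \tfrac{n}{p^s}\Bigr\rfloor!\Bigr)=\sum_{j>s}\Bigl\lfloor\tfrac{n}{p^j}\Bigr\rfloor .\]
Combining this with $v_p(\binom nm)=v_p(n!)-v_p(m!)-v_p((n-m)!)$ gives
\[v_p\Bigl(\binom{n}{m}\Bigr)=\sum_{j\ge1}\Bigl(\lfloor n/p^j\rfloor-\lfloor m/p^j\rfloor-\lfloor (n-m)/p^j\rfloor\Bigr),\]
and every summand here is $\ge0$.

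\emph{Part (1).} For (A), subtract: $\Theta(n)-\Theta(m)-\Theta(n-m)$ equals the same sum restricted to $j>s$. Since all summands are nonnegative, this partial sum is at most $v_p(\binom nm)$, which is (A).

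For (C'), take $v_p(b_n)\ge n$. By the strict upper bound in Proposition~\ref{val_of_n!_prop},
\[v_p(b_n)-\Theta(n)\ \ge\ n-\frac{\lfloor n/p^s\rfloor}{p-1}\ \ge\ n\Bigl(1-\frac{1}{p^s(p-1)}\Bigr).\]
The right-hand side tends to $\infty$ exactly when $p^s(p-1)>1$. This fails only for $p=2,s=0$, i.e. it holds precisely when $p+s\ge3$.

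\emph{Part (2), case (c) $\Theta(n)=v_p(n!)$.} Here (A) is an equality, by $v_p(\binom nm)=v_p(n!)-v_p(m!)-v_p((n-m)!)$. Also $\Theta(0)=0$, which is (B). For (C), $v_p(n!)<n/(p-1)\le n$. For (D), $v_p(n!)\ge n/(p-1)-\lfloor\log_pn\rfloor-1\to\infty$.

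\emph{Part (2), case (b) $\Theta(n)=Cn$.} Here (A) follows from $Cm+C(n-m)=Cn$ together with $v_p(\binom nm)\ge0$. (B) and (D) are immediate, and (C) holds because $Cn\le n$ when $C\le1$.

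\emph{Part (2), case (a) $\Theta_r$.} (B) is built into the definition (\ref{Theta_r_def}). For (C), use $\lfloor\log_pn\rfloor\le n-1$ for $n\ge1$ and $r\le1$, which give $\Theta_r(n)\le n$. (D) is clear since $\lfloor\log_p n\rfloor\to\infty$.

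The main work is (A) for $\Theta_r$, which I expect to be the only real obstacle. If $m=n$, the inequality reads $\Theta_r(n)+0+0\ge\Theta_r(n)$, so it holds trivially. Otherwise $1\le m<n$, and it suffices to prove
\[\lfloor\log_pn\rfloor\ \le\ \max\{\lfloor\log_pm\rfloor,\lfloor\log_p(n-m)\rfloor\}+v_p\Bigl(\binom nm\Bigr).\]
Granting this, the inequality in (A) follows by adding the nonnegative quantities $\min\{\lfloor\log_pm\rfloor,\lfloor\log_p(n-m)\rfloor\}$ and $r\ge0$ to the right-hand side.

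To prove the displayed inequality, set $M=\max\{\lfloor\log_pm\rfloor,\lfloor\log_p(n-m)\rfloor\}$, so that $m,n-m<p^{M+1}$. Then $n<2p^{M+1}\le p^{M+2}$, hence $\lfloor\log_pn\rfloor\le M+1$. If $\lfloor\log_pn\rfloor=M+1$, then $n\ge p^{M+1}$. In that case the summand $j=M+1$ in the formula for $v_p(\binom nm)$ is $\lfloor n/p^{M+1}\rfloor-0-0\ge1$. This is Kummer's carry out of digit position $M$, and it gives $v_p(\binom nm)\ge1$, which completes (A).
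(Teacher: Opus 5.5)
Your proof is correct. For (C'), cases (b) and (c), and (B)--(D) you follow the paper. For the two instances of (A), however, your route is genuinely different and cleaner.

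The paper works with the digit-sum forms of Legendre's and Kummer's formulas. In part (1) it expands $n,m,n-m$ in base $p$ and rewrites the difference as a weighted sum of the low digit differences $b_i+c_i-a_i$ ($i<s$). For $\Theta_r$ it writes $n=i+j$ with $i\ge j\ge1$ and uses $\lfloor\log_p(i+j)\rfloor\le\lfloor\log_p i\rfloor+1$. It then splits into two cases:
\begin{itemize}
\item $j\ge p$, where $\lfloor\log_p j\rfloor\ge1$ absorbs the possible jump;
\item $j\le p-1$, where a jump forces a multiple of $p$ among $i+1,\dots,i+j$, while $p\nmid j!$.
\end{itemize}

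You use the floor-sum form $v_p(N!)=\sum_{j\ge1}\lfloor N/p^j\rfloor$ instead. In part (1) this turns (A) into a termwise comparison of nonnegative summands. It also identifies the slack exactly as $\sum_{j=1}^{s}\bigl(\lfloor n/p^j\rfloor-\lfloor m/p^j\rfloor-\lfloor (n-m)/p^j\rfloor\bigr)$, the number of carries out of the $s$ lowest digits when adding $m$ and $n-m$. This avoids the paper's bookkeeping, where individual differences $b_i+c_i-a_i$ can be negative, so a termwise estimate of the weighted sum has to be handled with care.

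For $\Theta_r$, your single carry at position $M+1$ gives the stronger bound $\lfloor\log_p n\rfloor\le\max\{\lfloor\log_p m\rfloor,\lfloor\log_p(n-m)\rfloor\}+v_p\binom{n}{m}$ uniformly. It needs no case distinction and never uses the smaller logarithm. In exchange, the paper's case split relies only on the elementary fact that $p$ divides $(i+1)\cdots(i+j)$ but not $j!$.

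You also make explicit two points the paper leaves implicit: the trivial case $m=n$, and the bound $\lfloor\log_p n\rfloor\le n-1$ behind (C) for $\Theta_r$.
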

\begin{proof}
    (1) By Proposition~\ref{val_of_n!_prop}, we have 
    \[v_p\left(\left\lfloor\frac{n}{p^s}\right\rfloor!\right)\le\left\lfloor\frac{n}{p^s}\right\rfloor\frac{1}{p-1}\le\frac{n}{p^s(p-1)}\]
    for all $n\ge0$. Thus, $\lim_{n\rightarrow\infty}(n-v_p(\lfloor n/p^s\rfloor!))=\infty$ if $p+s\ge3$. Thus, we have (C') in this case. 

    We prove (A). Let $n\ge m\ge1$ and let
    \begin{align*}
        n&=a_xp^x+\cdots+a_sp^s+\cdots+a_1p+a_0\quad(x\ge s,~0\le a_0,\cdots,a_x\le p-1),\\
        m&=b_yp^y+\cdots+b_sp^s+\cdots+b_1p+b_0\quad(y\ge s,~0\le b_0,\cdots,b_y\le p-1),\\
        n-m&=c_zp^z+\cdots+c_sp^s+\cdots+c_1p+b_0\quad(z\ge s,~0\le c_0,\cdots,c_z\le p-1)
    \end{align*}
    be base-$p$ expansions of $n,m,n-m$, respectively. Then, by Proposition~\ref{val_of_n!_prop} and Proposition~\ref{val_of_binom_coeff_prop}, we have
    \begin{align*}
        v_p\left(\left\lfloor\frac{n}{p^s}\right\rfloor!\right)&=\frac{1}{p-1}\left(a_xp^{x-s}+\cdots+a_{s+1}p+a_s-\sum_{i\ge s}a_i\right),\\
        v_p\left(\left\lfloor\frac{m}{p^s}\right\rfloor!\right)&=\frac{1}{p-1}\left(b_yp^{y-s}+\cdots+b_{s+1}p+b_s-\sum_{i\ge s}b_i\right),\\
        v_p\left(\left\lfloor\frac{n-m}{p^s}\right\rfloor!\right)&=\frac{1}{p-1}\left(c_zp^{z-s}+\cdots+c_{s+1}p+c_s-\sum_{i\ge s}c_i\right),\\
        v_p\left(\binom{n}{m}\right)&=\frac{1}{p-1}\sum_{i\ge0}(b_i+c_i-a_i).
    \end{align*}
    Therefore, we obtain
    \begin{align*}
        &\Theta(m)+\Theta(n-m)+v_p(\binom{n}{m})-\Theta(n)\\
        =&\frac{1}{p-1}\left(\sum_{i=0}^{s-1}(b_i+c_i-a_i)+(b_yp^{y-s}+\cdots+b_0+c_zp^{z-s}+\cdots+c_0-a_xp^{x-s}-\cdots-a_0)\right)\\
        =&\frac{1}{p-1}\left(\sum_{i=0}^{s-1}(b_i+c_i-a_i)+\frac{1}{p^s}\left(m+(n-m)-n+\sum_{i=0}^{s-1}p^i(b_i+c_i-a_i)\right)\right)\\
        =&\frac{1}{p-1}\sum_{i=0}^{s-1}\left(1+\frac{p^i}{p^s}\right)(b_i+c_i-a_i)\\
        \ge&\frac{1}{p-1}\sum_{i=0}^{s-1}\left(\frac{p^i}{p^s}+\frac{p^i}{p^s}\right)(b_i+c_i-a_i)\\
        =&\frac{2}{p^s(p-1)}\left(\sum_{i=0}^{s-1}p^ib_i+\sum_{i=0}^{s-1}p^ic_i-\sum_{i=0}^{s-1}p^ia_i\right).
    \end{align*}
    Since $m+(n-m)=n$ and $a_i,b_i,c_i$ were base-$p$ digits of $n,m,n-m$ respectively, 
    \[\sum_{i=0}^{s-1}p^ib_i+\sum_{i=0}^{s-1}p^ic_i-\sum_{i=0}^{s-1}p^ia_i\ge0.\]
    Thus, (A) also holds.
    
    (2) \textbf{(a):} (B), (C), and (D) are clear by definition.

    We prove (A). Since $r\ge0$, it is sufficient to show in the case of $r=0$. By setting $n=i+j$, the problem is reduced to showing 
    \[v_p(\binom{i+j}{j})\ge\lfloor\log_p(i+j)\rfloor-\lfloor\log_pi\rfloor-\lfloor\log_pj\rfloor\quad\text{for all }i\ge j\ge1.\]
    Since $\lfloor\log_p(i+j)\rfloor\le\lfloor\log_p2i\rfloor\le\lfloor\log_pi\rfloor+1$, when $j\ge p$, we have
    \[v_p(\binom{i+j}{j})\ge0\ge1-\lfloor\log_pj\rfloor\ge\lfloor\log_p(i+j)\rfloor-\lfloor\log_pi\rfloor-\lfloor\log_pj\rfloor.\]
    
    So consider the case of $1\le j\le p-1$. Then, we have 
    \[\lfloor\log_p(i+j)\rfloor-\lfloor\log_pi\rfloor-\lfloor\log_pj\rfloor=\lfloor\log_p(i+j)\rfloor-\lfloor\log_pi\rfloor\le1.\]
    When none of $i+1,\cdots,i+j$ are multiples of $p$, 
    \[\lfloor\log_p(i+j)\rfloor-\lfloor\log_pi\rfloor=0\]
    If there exists a positive integer $m$ such that $i<mp\le i+j$, we have $v_p(\binom{i+j}{j})\ge1$ because $j!$ is not a multiple of $p$. In any case, we obtain
    \[v_p(\binom{i+j}{j})\ge\lfloor\log_p(i+j)\rfloor-\lfloor\log_pi\rfloor-\lfloor\log_pj\rfloor.\]

    \textbf{(b):} (B) is clear. (A) holds because $v_p(\binom{n}{m})\ge0$. The condition $C\le1$ leads (C), and $0<C$ leads (D).

    \textbf{(c):} (B) and (D) are clear. In this case, $\Theta(n)-\Theta(m)-\Theta(n-m)$ is precisely $v_p(\binom{n}{m})$. Thus, we have (A). Since $v_p(n!)\le n/(p-1)$ for all $n\ge0$ by Proposition~\ref{val_of_n!_prop}, we have (C) too.
\end{proof}

Note that $\mathbb{C}_p\langle t|\ge n\rangle\subset\mathbb{C}_p\langle t|v_p(\lfloor n/p^s\rfloor!)\rangle$ does not hold when $p+s<3$, that is, $p=2$ and $s=1$. This is because $v_2(2^k)=2^k-1$ for all $k\ge0$ and thus, $\liminf_{n\rightarrow\infty}(n-v_2(\lfloor n/2^0\rfloor!))\ne\infty$.

\subsection{Change of Basis and Ring Structure}
\label{Change_of_basis_and_ring_stru_subsection}

\begin{proposition}
\label{subring_prop}
    (1) Let $\Theta:\mathbb{Z}_{\ge0}\rightarrow\mathbb{R}\cup\{\pm\infty\}$ be a function satisfying
    \begin{enumerate}[label=(\Alph*)]
        \item $\Theta(m)+\Theta(n-m)+v_p(\binom{n}{m})\ge\Theta(n)$ for all $n\ge m\ge1$.
    \end{enumerate}
    Then, $\mathbb{C}_p\langle T|\Theta\rangle$ is a subring of $\mathbb{C}_p\llbracket T\rrbracket$.
    
    (2) Let $\Theta:\mathbb{Z}_{\ge0}\rightarrow\mathbb{R}\cup\{\pm\infty\}$ be a function satisfying

    \begin{enumerate}[label=(\Alph*)]
        \item $\Theta(m)+\Theta(n-m)+v_p(\binom{n}{m})\ge\Theta(n)$ for all $n\ge m\ge1$, 
        \item $\Theta(0)=0$.
    \end{enumerate}
    Then, $\mathbb{C}_p\langle T|\ge\Theta\rangle$ is a subring of $\mathbb{C}_p\llbracket T\rrbracket$. Moreover, if $F(T)\in\mathbb{C}_p\langle T|\ge\Theta\rangle$ with $v_p(F(0))=0$, then $1/F(T)\in \mathbb{C}_p\langle T|\ge\Theta\rangle$.
\end{proposition}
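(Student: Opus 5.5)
The plan is to work directly with the product formula for exponential generating functions. Write $F(T)=\sum a_n T^n/n!$ and $G(T)=\sum b_n T^n/n!$. Then
\[F(T)G(T)=\sum_{n=0}^{\infty}c_n\frac{T^n}{n!},\qquad c_n=\sum_{m=0}^{n}\binom{n}{m}a_m b_{n-m}.\]
Closure under addition and negation is immediate from the ultrametric inequality, and both sets contain $0$ and $1$. Note that $1\in\mathbb{C}_p\langle T|\ge\Theta\rangle$ uses (B), and in (1) we have $1\in\mathbb{C}_p\langle T|\Theta\rangle$ because all its coefficients of degree $\ge1$ vanish. So the work is multiplicative closure, plus inversion in (2).

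For (2), take $v_p(a_m)\ge\Theta(m)$ and $v_p(b_{n-m})\ge\Theta(n-m)$. Each term with $1\le m\le n-1$ (and also $m=n$ when $n\ge1$, using $m\ge1$) has valuation at least
\[\Theta(m)+\Theta(n-m)+v_p\binom{n}{m}\ge\Theta(n)\]
by (A). The endpoint terms $m=0$ or $m=n$ involve $\Theta(0)=0$, so they are bounded below by $\Theta(n)$ directly via (B). Hence $v_p(c_n)\ge\Theta(n)$.

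For (1), we only know $v_p(a_m)-\Theta(m)\to\infty$, so we need uniformity. Fix $N$. Choose $M$ with $v_p(a_m)\ge\Theta(m)+N$ and $v_p(b_m)\ge\Theta(m)+N$ for $m\ge M$. Let $C$ be a lower bound for $v_p(a_m)-\Theta(m)$ and $v_p(b_m)-\Theta(m)$ over all $m$; this is finite since only finitely many $m<M$ are exceptional. Some care is needed with $\pm\infty$ values of $\Theta$; I would treat them with the convention that these terms are vacuous or the condition forces zero coefficients. For $n\ge 2M$, every index pair $(m,n-m)$ has at least one entry $\ge M$, so each term has valuation at least $\Theta(n)+N+C$, again using (A). The terms with $m=0$ or $m=n$ need (A) in the form that includes $\Theta(0)$. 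Since (1) does not assume (B), I would handle these endpoints separately as $a_0b_n$ and $a_nb_0$, which give $v_p\ge v_p(b_n)+v_p(a_0)\ge\Theta(n)+N+v_p(a_0)$ directly. Thus $v_p(c_n)-\Theta(n)\to\infty$. This bookkeeping of the boundary terms without (B) is the main fiddly point of (1).

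For the inverse in (2), scale $F$ by the unit $F(0)^{-1}$ to assume $F(0)=1$, and write $F=1-H$ with $H\in\mathbb{C}_p\langle T|\ge\Theta\rangle$, $H(0)=0$. Formally $1/F=\sum_{k\ge0}H^k$. This converges $T$-adically in $\mathbb{C}_p\llbracket T\rrbracket$ because $H^k$ has order $\ge k$, so each coefficient is a finite sum. Each $H^k$ lies in the ring by the product step, and coefficientwise the bound $v_p\ge\Theta(n)$ is preserved under finite sums. Hence $1/F\in\mathbb{C}_p\langle T|\ge\Theta\rangle$. Alternatively, one can solve the recursion $\sum_m\binom{n}{m}a_m g_{n-m}=0$ for $g_n$ and bound by induction, using $a_0=1$ and (A). I expect no serious obstacle beyond the endpoint handling.
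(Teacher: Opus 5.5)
Your proposal is correct and follows the paper's proof. Multiplicative closure comes from the same termwise bound via (A) on $\sum_m\binom{n}{m}a_mb_{n-m}$; in (1) the paper likewise takes a uniform lower bound over $m\ge1$, treats the constant terms $a_0,b_0$ separately, and uses $n\ge 2n_L$ so that one index of each interior pair is large. For the inverse, the paper uses exactly your stated alternative, namely the recursion $g_n=-a_0^{-1}\sum_{m=1}^{n}\binom{n}{m}a_mg_{n-m}$ with induction. Your geometric series $\sum_k H^k$ is an equally valid route, reducing invertibility to the already-proved multiplicative closure plus $T$-adic convergence.
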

\begin{proof}
    (1) It is immediate that $0,1\in\mathbb{C}_p\langle t|\Theta\rangle$ and $\mathbb{C}_p\langle t|\Theta\rangle$ is closed under addition.

    Let $F(T)=\sum_{n=0}^{\infty}b_nT^n,G(T)=\sum_{n=0}^{\infty}c_nT^n\in \mathbb{C}_p\llbracket T\rrbracket$. Then,
    \begin{equation}
    \label{F(T)G(T)_expand_equ}
        F(T)G(T)=\sum_{n=0}^{\infty}\left(\sum_{m=0}^{n}\binom{n}{m}b_mc_{n-m}\right)T^n.
    \end{equation}

    Assume $F(T),G(T)\in\mathbb{C}_p\langle t|\Theta\rangle$.
    Then, there exists $K\in\mathbb{R}$ such that 
    \[-K\le\min\left\{v_p(b_0),\inf_{m\ge1}(v_p(b_m)-\Theta(m)),v_p(c_0),\inf_{m\ge1}(v_p(c_m)-\Theta(m))\right\}.\]

    For each $L\in\mathbb{R}$, there exists $n_L\in\mathbb{Z}_{\ge0}$ such that
    \[v_p(b_n)-\Theta(n)\ge L+K,\quad v_p(c_n)-\Theta(n)\ge L+K\quad\text{for all }n\ge n_L.\]

    Then, for any $n\ge2n_L$ and $0\le m\le n$, we have
    \[v_p\left(\binom{n}{m}b_mc_{n-m}\right)\ge\Theta(n)+L.\]
    by the definition of $K$ and the condition (A). 
    Hence,
    \[v_p\left(\sum_{m=0}^{n}\binom{n}{m}b_mc_{n-m}\right)\ge\Theta(n)+L,\]
    for all $n\ge 2n_L$. This means $F(T)G(T)\in\mathbb{C}_p\langle t|\Theta\rangle$. Thus, $\mathbb{C}_p\langle t|\Theta\rangle$ is a subring.

    (2) By condition (B), the constant function $1\in\mathbb{C}_p\langle T|\ge\Theta\rangle$.

    It is also immediate that $\mathbb{C}_p\langle T|\ge\Theta\rangle$ is an additive subgroup of $\mathbb{C}_p\llbracket T\rrbracket$.

    Assume $F,G\in \mathbb{C}_p\langle T|\ge\Theta\rangle$. Then, we again have \eqref{F(T)G(T)_expand_equ} and 
    \[v_p\left(\sum_{m=0}^{n}\binom{n}{m}b_mc_{n-m}\right)\ge\min\left\{v_p(b_m)+v_p(c_{n-m})+v_p\left(\binom{n}{m}\right)\right\}\ge\Theta(n)\]
    by conditions (A) and (B). So we have $F(T)G(T)\in \mathbb{C}_p\langle T|\ge\Theta\rangle$, and thus $\mathbb{C}_p\langle T|\ge\Theta\rangle$ is a subring of $\mathbb{C}_p\llbracket T\rrbracket$.

    If $F(T)\in \mathbb{C}_p\langle T|\ge\Theta\rangle$ with $v_p(b_0)=0$ and $F(T)G(T)=1$, then,
    \[c_0=\frac{1}{b_0},\quad c_n=-\frac{1}{b_0}\sum_{m=1}^{n}\binom{n}{m}b_mc_{n-m}\quad(n\ge1).\]
    We have $v_p(c_n)\ge\Theta(n)$ by induction, and this completes the proof.
\end{proof}

\begin{remark}
\label{why_<ge>_only_remark}
    We determined the unit group of $\mathbb{C}_p\langle t|\ge\Theta\rangle$ in Proposition~\ref{subring_prop}, as it will be used in the next subsection. While the unit elements of $\mathbb{C}_p\langle t|\ge\Theta\rangle$ can be determined only by constant terms, those of $\mathbb{C}_p\langle t|\Theta\rangle$ cannot. For example, for any $a\in\mathbb{C}_p$ and $s\in\mathbb{Z}_{\ge0}$,
    \[F(T)=a-\frac{aT}{p}\in\mathbb{C}_p\langle t|v_p(\lfloor n/p^s\rfloor!)\rangle\quad\text{but}\quad F(T)^{-1}\notin\mathbb{C}_p\langle t|v_p(\lfloor n/p^s\rfloor!)\rangle.\] In view of this distinction, we restrict our attention to the type of $\mathbb{C}_p\langle t|\ge\Theta\rangle$, but not $\mathbb{C}_p\langle t|\Theta\rangle$ in the next section. 
    
    Instead, we can utilize the following inclusion relation; Since $v_p(\lfloor n/p^s\rfloor!)$ is evaluated as
    \[v_p\left(\left\lfloor\frac{n}{p^s}\right\rfloor!\right)\le\frac{n}{p^s(p-1)}\quad\text{for all }n\ge0\]
    by Proposition~\ref{val_of_n!_prop}, we have
    \[\mathbb{C}_p\langle t|\ge Cn\rangle\subset\mathbb{C}_p\langle t|v_p(\lfloor n/p^s\rfloor!)\rangle\quad\text{for }C>\frac{1}{p^s(p-1)}.\] 
    Of course, $\mathbb{C}_p\langle t|\ge Cn\rangle$ is not a ``good approximation" because $\mathbb{C}_p\langle t|\ge Cn\rangle$ restricts the valuations of all coefficients while $\mathbb{C}_p\langle t|v_p(\lfloor n/p^s\rfloor!)\rangle$ imposes no conditions on the initial finite terms. However, given the trade-off between the range of applications and the technical difficulty, this seems to be useful substitute.
\end{remark}

In the introduction, we provided \eqref{R_r(O_Cp)_change_of_basis} and \eqref{R_r,s(O_Cp)_change_of_basis}, and we also stated that $\mathcal{R}_r$ with $r\in[0,1]$ is a ring and that \eqref{R_r_decomposition_equ} holds. The following theorem contains these statements as special cases and will also be used in the next section.
\begin{theorem}
\label{A,M',R_properties_thm}
    Let $\Theta^{(1)},\cdots,\Theta^{(u)}:\mathbb{Z}_{\ge0}\rightarrow\mathbb{R}\cup\{\pm\infty\}$ be functions satisfying
    \begin{enumerate}
        \item[(A)] $\Theta^{(i)}(m)+\Theta^{(i)}(n-m)\ge\Theta^{(i)}(n)$ for all $n\ge m\ge1$,
        \item[(C')] $\mathbb{C}_p\langle t|\ge n\rangle\subset\mathbb{C}_p\langle t|\Theta^{(i)}\rangle$,
    \end{enumerate}
    for all $1\le i\le u$ and let $\Theta^{(u+1)},\cdots,\Theta^{(u+v)}:\mathbb{Z}_{\ge0}\rightarrow\mathbb{R}\cup\{\pm\infty\}$ be functions satisfying 
    \begin{enumerate}
        \item[(A)] $\Theta^{(u+i)}(m)+\Theta^{(u+i)}(n-m)\ge\Theta^{(u+i)}(n)$ for all $n\ge m\ge1$,
        \item[(B)] $\Theta^{(u+i)}(0)=0$,
        \item[(C)] $\mathbb{C}_p\langle t|\ge n\rangle\subset\mathbb{C}_p\langle t|\ge\Theta^{(u+i)}\rangle$
    \end{enumerate}
    for all $1\le i\le v$.

    Define 
    \begin{align*}
        \mathcal{A}&\coloneqq\left(\bigcap_{i=1}^{u}\mathbb{C}_p\langle t|\Theta^{(i)}\rangle\right)\cap\left(\bigcap_{i=1}^{v}\mathbb{C}_p\langle t|\ge\Theta^{(u+i)}\rangle\right),\\
        \mathcal{M'}&\coloneqq\sum_{\alpha^{\ell}=1}e^{\alpha t}\mathcal{A}=\left\{\sum_{\alpha^{\ell}=1}F_\alpha(t)~\middle|~F_{\alpha}(t)\in\mathcal{A}\right\}\subset\mathbb{C}_p\llbracket t\rrbracket,\\
        \mathcal{R}&\coloneqq\sum_{\alpha^q=\alpha}e^{\alpha t}\mathcal{A}=\left\{\sum_{\alpha^q=\alpha}F_\alpha(t)~\middle|~F_{\alpha}(t)\in\mathcal{A}\right\}\subset\mathbb{C}_p\llbracket t\rrbracket\quad(\text{see \eqref{q_def} for }q).
    \end{align*}

    Then, 

    (1) For any $a_1,\cdots,a_m\in\mathbb{C}_p$ such that 
    $a_1\equiv\cdots\equiv a_m\mod{p}$, 
    \[e^{a_1 t}\mathcal{A}=\cdots=e^{a_m t}\mathcal{A}=\sum_{i=1}^{m}e^{a_i t}\mathcal{A}.\]
    
    In particular, for any subset $A\subset\mathcal{O}_{\mathbb{C}_p}$ such that for each $\alpha\in\{x\in\mathcal{O}_h\mid x^q=x\}$, there is a unique $a\in A$ satisfying $v_p(a-\alpha)\ge1$,
    \[\mathcal{M'}=\sum_{\substack{a\in A\\a^\ell\equiv1}}e^{at}\mathcal{A},\quad\mathcal{R}=\sum_{a\in A}e^{at}\mathcal{A}\]

    (2) $\mathcal{R}$ is a subring of $\mathbb{C}_p\llbracket t\rrbracket$.

    (3) $\mathcal{M'}$ is an $\mathcal{A}$-module by addition and multiplication in $\mathbb{C}_p\llbracket t\rrbracket$, and $\mathcal{R}$ is an $\mathcal{A}$-algebra by inclusion map $\mathcal{A}\hookrightarrow\mathcal{R}$.

    (4) If at least one of the $\Theta^{(u+i)}$ ($1\le i\le v$) is equal to $\Theta_r$ with some $r\in[0,1]$, for any subset $A\subset\mathcal{O}_{\mathbb{C}_p}$ such that for each $\alpha\in\{x\in\mathcal{O}_h\mid x^q=x\}$, there is a unique $a\in A$ satisfying $v_p(a-\alpha)\ge1$, 
    \[\mathcal{M'}=\bigoplus_{\substack{a\in A\\a^\ell\equiv1}}e^{at}\mathcal{A},\quad\mathcal{R}=\bigoplus_{a\in A}e^{at}\mathcal{A}\]
    as an $\mathcal{A}$-module.
\end{theorem}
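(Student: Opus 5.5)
The approach is to first show that $\mathcal{A}$ is a subring of $\mathbb{C}_p\llbracket t\rrbracket$ that contains $e^{ct}$ whenever $v_p(c)\ge1$. Everything else then follows from the identity $e^{at}e^{bt}=e^{(a+b)t}$ and from the uniqueness part of Proposition~\ref{internal_direct_sum_prop}.

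\textbf{The ring $\mathcal{A}$.} Hypothesis (A) of the theorem is stronger than (A) of Proposition~\ref{subring_prop}, since $v_p(\binom{n}{m})\ge0$. Hence each $\mathbb{C}_p\langle t|\Theta^{(i)}\rangle$ is a subring by Proposition~\ref{subring_prop}(1). Together with (B), each $\mathbb{C}_p\langle t|\ge\Theta^{(u+i)}\rangle$ is a subring by Proposition~\ref{subring_prop}(2). So $\mathcal{A}$, an intersection of subrings, is a subring. If $v_p(c)\ge1$, then $e^{ct}=\sum c^n t^n/n!$ has $v_p(c^n)\ge n$, so $e^{ct}\in\mathbb{C}_p\langle t|\ge n\rangle$. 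By (C) and (C') this gives $e^{ct}\in\mathcal{A}$, and likewise $e^{-ct}\in\mathcal{A}$.

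\textbf{Parts (1) and (3).} If $a_i\equiv a_j \bmod p$, then $e^{a_it}\mathcal{A}=e^{a_jt}e^{(a_i-a_j)t}\mathcal{A}\subset e^{a_jt}\mathcal{A}$. By symmetry these sets are equal, and since each is an additive group, so is their sum. The "in particular" statement follows because each $\alpha$ with $\alpha^q=\alpha$ (respectively $\alpha^\ell=1$) is congruent mod $p$ to exactly one $a\in A$. Moreover, $a^\ell\equiv1$ picks out exactly the $a$'s corresponding to $\ell$-th roots of unity, because the reduction map is injective on $\{x^q=x\}$. Part (3) is immediate from the ring property of $\mathcal{A}$: multiplying $e^{\alpha t}F_\alpha$ by $G\in\mathcal{A}$ stays in $e^{\alpha t}\mathcal{A}$.

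\textbf{Part (2).} Clearly $1\in e^{0t}\mathcal{A}$ and $\mathcal{R}$ is additively closed. For a product, $e^{\alpha t}F\cdot e^{\beta t}G=e^{(\alpha+\beta)t}FG$. Here $\alpha+\beta\in\mathcal{O}_h$, and its reduction lies in $\mathbb{F}_q$, so $\alpha+\beta\equiv\gamma\bmod p$ for the Teichmüller lift $\gamma$, which satisfies $\gamma^q=\gamma$. By (1), $e^{(\alpha+\beta)t}FG\in e^{\gamma t}\mathcal{A}\subset\mathcal{R}$.

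\textbf{Part (4) and the main obstacle.} The real content is directness. By (1) it suffices to treat $A=\{\alpha:\alpha^q=\alpha\}$. Suppose $\sum_\alpha e^{\alpha t}F_\alpha=0$ with $F_\alpha\in\mathcal{A}\subset\mathbb{C}_p\langle t|\ge\Theta_r\rangle$. I apply Proposition~\ref{internal_direct_sum_prop} with $\ell$ replaced by $q-1$; this has the same $h$.
\begin{itemize}
\item For $\alpha\neq0$, by (\ref{M_r^alpha(O_Cp)_gen_func_equ}) the sequence of $e^{\alpha t}F_\alpha$ lies in $M_r^{(\alpha)}(\mathcal{O}_{\mathbb{C}_p})$.
\item The delicate term is $\alpha=0$, since Proposition~\ref{Basic_prop} needs $v_p(\alpha)=0$. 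Here I use the definition of $\Theta_r$ directly. The coefficients $b_n$ of $F_0$ satisfy $v_p(b_n)\ge\lfloor\log_pn\rfloor+r\to\infty$ and $v_p(b_0)\ge0$. So $F_0$ corresponds to an element of $N(\mathcal{O}_{\mathbb{C}_p})$.
\end{itemize}
The uniqueness in Proposition~\ref{internal_direct_sum_prop} then forces every $e^{\alpha t}F_\alpha=0$, hence every $F_\alpha=0$. For $\mathcal{M}'$, the same relation involves only the nonzero $\alpha$ with $\alpha^\ell=1$, which form a subfamily of the above. This is where the hypothesis $\Theta_r\in\{\Theta^{(u+i)}\}$ is used, and it is the step I expect to require the most care.
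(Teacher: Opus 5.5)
Your proposal is correct and follows essentially the paper's route. For (1)--(3) you make $\mathcal{A}$ a subring via Proposition~\ref{subring_prop} and use $e^{ct}\in\mathbb{C}_p\langle t|\ge n\rangle\subset\mathcal{A}$ for $v_p(c)\ge1$. For (4) you use the uniqueness in the decomposition \eqref{R_r(O_C_p)_decomposition}, with the $\alpha=0$ summand landing in the $N(\mathcal{O}_{\mathbb{C}_p})$-component. Your explicit replacement of $\ell$ by $q-1$ (which has the same $h$) in part (4) is slightly more careful than the paper's write-up, which silently treats the index set $\{\alpha^{q}=\alpha\}$ as $\{\alpha^{\ell}=1\}\cup\{0\}$ there.
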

\begin{proof}
    (1) It is enough to show
    \[e^{a_1t}\mathcal{A}=e^{a_2t}\mathcal{A}=\sum_{i=1}^{m}e^{a_it}\mathcal{A}\]
    Let $c=(a_1-a_2)/p\in\mathcal{O}_{\mathbb{C}_p}$. Then,
    \[e^{(a_1-a_2)t}=\sum_{n=0}^{\infty}\frac{(pc)^n}{n!}t^n\in\mathbb{C}_p\langle t|\ge n\rangle\subset\mathcal{A}.\]
    Since $\mathcal{A}$ is a subring of $\mathbb{C}_p\llbracket t\rrbracket$ by Proposition~\ref{subring_prop}, we have $e^{a_1t}\mathcal{A}=e^{a_2t}e^{(a_1-a_2)t}\mathcal{A}\subset e^{a_2t}\mathcal{A}$, and thus, we obtain $e^{a_1t}\mathcal{A}=e^{a_2t}\mathcal{A}$ by symmetry.

    This also leads
    \[\sum_{i=1}^{m}e^{a_it}\mathcal{A}=\sum_{i=1}^{m}e^{a_1t}\mathcal{A}\subset e^{a_1t}\mathcal{A}\]
    because $\mathcal{A}$ is closed under addition. The reverse inclusion is clear since $0\in\mathcal{A}$.

    (2) Let $B=\{\alpha\in\mathcal{O}_h~|~\alpha^{q}=\alpha\}$. 
    Since $B$ contains $1$, $\mathcal{A}\subset\mathcal{R}$. By Proposition~\ref{subring_prop}, $\mathcal{A}$ is a ring, so $0,1\in\mathcal{R}$ and $\mathcal{R}$ is closed under addition.

    Because $B$ is a complete set of representatives for $\mathcal{O}_h/\mathfrak{m}_h$, for each $\alpha, \beta\in B$, there exists a unique $\gamma_{\alpha\beta}\in B$ such that $\gamma_{\alpha\beta}\equiv\alpha+\beta\mod{p}$.
    Thus, 
    \[\left(\sum_{\alpha^q=\alpha}e^{\alpha t}\mathcal{A}\right)\left(\sum_{\alpha^q=\alpha}e^{\alpha t}\mathcal{A}\right)=\sum_{\alpha,\beta\in B}e^{(\alpha+\beta)t}\mathcal{A}=\sum_{\gamma\in B}e^{\gamma t}\mathcal{A},\]
    and $\mathcal{R}$ is closed under multiplication.

    (3) It is immediate from (2) and the fact $\mathcal{A}$ is a ring.

    (4) Assume that at least one of the $\Theta^{(u+i)}$ ($1\le i\le v$) is equal to $\Theta_r$ with some $r\in[0,1]$.
    
    Let $\{C_\alpha(t)\}_{\alpha^q=\alpha}\subset\mathcal{A}$ and assume
    \[\sum_{\alpha^q=\alpha}C_\alpha(t)e^{\alpha t}=\sum_{\alpha^{\ell}=1}C_\alpha(t)e^{\alpha (t)}+C_0(t)=0.\]
    Because We already have \eqref{R_r(O_C_p)_decomposition} and since
    \[C_0(t)\in\mathcal{A}\subset\mathbb{C}_p\langle t|\ge\Theta_r\rangle\subset\mathbb{C}_p\langle t|0\rangle\cap\mathbb{C}_p\langle t|\ge0\rangle,\]
    it follows that $C_\alpha(t)=0$ for all $\alpha$. Therefore, 
    \[\mathcal{R}=\bigoplus_{\alpha^q=\alpha}e^{\alpha t}\mathcal{A}.\]

    Thus, the statement follows from (1).
\end{proof}

Since $K_h/\mathbb{Q}_p$ is an unramified extension, $v_p(\mathcal{O}_h)\subset\mathbb{Z}$. Thus, when $A\subset\mathcal{O}_h$ is a complete set of representatives for $\mathcal{O}_h/\mathfrak{m}_h$, it satisfies the condition: for each $\alpha\in\{x\in\mathcal{O}_h\mid x^q=x\}$, there is a unique $a\in A$ satisfying $v_p(a-\alpha)\ge1$. Therefore, we have the following corollaries as a special case of Theorem~\ref{A,M',R_properties_thm}.

\begin{corollary}
\label{change_of_basis_cor}
    Let $r\in[0,1]$ and let $A\subset\mathcal{O}_h$ be a complete set of representatives in $\mathcal{O}_h$ for $\mathcal{O}_h/\mathfrak{m}_h$.
    
    Then, 
    \[\mathcal{M}_{p,\ell,r}(\mathcal{O}_{\mathbb{C}_p})=\left(\bigoplus_{\substack{a\in A\\a^\ell\equiv1}}e^{at}\mathbb{C}_p\langle t|\ge\Theta_r\rangle\right)\oplus\Bigl(\mathbb{C}_p\langle t|0\rangle\cap\mathbb{C}_p\langle t|\ge0\rangle\Bigr).\]
\end{corollary}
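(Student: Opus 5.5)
The plan is to combine the decomposition \eqref{R_r(O_C_p)_decomposition}, established in Section~3, with Theorem~\ref{A,M',R_properties_thm}(1) and (4). We apply the theorem with $u=0$, $v=1$ and $\Theta^{(1)}=\Theta_r$, so that $\mathcal{A}=\mathbb{C}_p\langle t|\ge\Theta_r\rangle$. The hypotheses (A), (B), (C) for $\Theta_r$ with $r\in[0,1]$ are exactly Lemma~\ref{concrete_Theta_satisfying_condition_lemma}(2)(a). (The theorem states (A) without the binomial term, but the stronger form is what is used via Proposition~\ref{subring_prop}, and Lemma~\ref{concrete_Theta_satisfying_condition_lemma} supplies the version with $v_p\binom{n}{m}$.)

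By \eqref{R_r(O_C_p)_decomposition},
\[
\mathcal{M}_{p,\ell,r}(\mathcal{O}_{\mathbb{C}_p})=\mathcal{M}'\oplus\Bigl(\mathbb{C}_p\langle t|0\rangle\cap\mathbb{C}_p\langle t|\ge0\rangle\Bigr),
\qquad
\mathcal{M}'=\bigoplus_{\alpha^\ell=1}e^{\alpha t}\mathcal{A}.
\]
So it suffices to identify $\mathcal{M}'$ with $\bigoplus_{a\in A,\,a^\ell\equiv1}e^{at}\mathcal{A}$.

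First, $A$ satisfies the hypothesis of Theorem~\ref{A,M',R_properties_thm}(4), by the remark preceding the corollary. Since $\mathcal{O}_h$ is unramified, $v_p(\mathcal{O}_h)\subset\mathbb{Z}_{\ge0}\cup\{\infty\}$, so ``$v_p(a-\alpha)\ge1$'' is the same as ``$a\equiv\alpha \bmod \mathfrak{m}_h$''. The Teichmüller set $\{x^q=x\}$ is a complete set of representatives for $\mathcal{O}_h/\mathfrak{m}_h$, so each such $\alpha$ has a unique $a\in A$ congruent to it.

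Next, the indexing $a^\ell\equiv1$ should be read mod $\mathfrak{m}_h$. Reduction mod $p$ is injective on $\ell$-th roots of unity (as in the proof of Lemma~\ref{decomposition_lemma}), and every $x\in\mathbb{F}_q$ with $x^\ell=1$ lifts to a unique Teichmüller $\alpha$, which then satisfies $\alpha^\ell=1$. Hence $\alpha\mapsto a$ is a bijection between $\{\alpha^\ell=1\}$ and $\{a\in A: a^\ell\equiv1\}$. By Theorem~\ref{A,M',R_properties_thm}(1), $e^{\alpha t}\mathcal{A}=e^{at}\mathcal{A}$ for corresponding pairs, so the summands agree term by term. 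Directness then transfers from \eqref{R_r(O_C_p)_decomposition}, or alternatively follows from part (4).

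The main point needing care is this bookkeeping of the index set: interpreting $a^\ell\equiv1$ modulo $\mathfrak{m}_h$ and checking that it picks out exactly the residues of the $\ell$-th roots of unity. The rest is a direct citation of earlier results.
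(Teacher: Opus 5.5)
Your proposal is correct and follows the paper's route: the corollary is obtained by specializing Theorem~\ref{A,M',R_properties_thm} to $\mathcal{A}=\mathbb{C}_p\langle t|\ge\Theta_r\rangle$, using the hypotheses from Lemma~\ref{concrete_Theta_satisfying_condition_lemma}(2)(a), and substituting into \eqref{R_r(O_C_p)_decomposition}; your index bookkeeping makes explicit what the paper leaves implicit. One correction to your parenthetical: the version of (A) with $v_p\binom{n}{m}$ is the \emph{weaker} hypothesis, not the stronger, and it is the one that must be meant, since the binomial-free inequality fails for $\Theta_r$ with $r<1$ (take $m=1$, $n=p$).
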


\begin{corollary}
\label{change_of_basis_la_cor}
    Let $r\in[0,1]$, $s\in\mathbb{Z}_{\ge0}$, and $A$ be a complete set of representatives in $\mathcal{O}_h$ for $\mathcal{O}_h/\mathfrak{m}_h$. Suppose $p+s\ge3$.

    Then, 
    \begin{align*}
        \mathcal{M}_{p,\ell,r,s}^{la}(\mathcal{O}_{\mathbb{C}_p})=\bigoplus_{\substack{a\in A\\a^\ell\equiv1}}e^{at}\Bigl(\mathbb{C}_p\langle t|\ge\Theta_r\rangle\cap\mathbb{C}_p\langle t|v_p(\lfloor n/p^s\rfloor!)\rangle\Bigr)
    \end{align*}
\end{corollary}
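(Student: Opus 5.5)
The plan is to derive Corollary~\ref{change_of_basis_la_cor} from the decomposition \eqref{R_r,s^la(O_Cp)_decomposition} established at the end of Section~\ref{Reformulation_via_power_series_subsection}, namely
\[\mathcal{M}_{r,s}^{la}(\mathcal{O}_{\mathbb{C}_p})=\bigoplus_{\alpha^{\ell}=1}e^{\alpha t}\Bigl(\mathbb{C}_p\langle t|\ge\Theta_r\rangle\cap\mathbb{C}_p\langle t|v_p(\lfloor n/p^s\rfloor!)\rangle\Bigr),\]
and then to replace the index set $\{\alpha^\ell=1\}$ by $\{a\in A: a^\ell\equiv1\}$ using Theorem~\ref{A,M',R_properties_thm}.

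First I apply Theorem~\ref{A,M',R_properties_thm} with $u=1$, $\Theta^{(1)}(n)=v_p(\lfloor n/p^s\rfloor!)$, and $v=1$, $\Theta^{(2)}=\Theta_r$. The hypotheses follow from Lemma~\ref{concrete_Theta_satisfying_condition_lemma}. Part (1) of that lemma gives (A) and, because $p+s\ge3$, also (C') for $\Theta^{(1)}$. Part (2)(a) gives (A), (B), (C) for $\Theta_r$ with $r\in[0,1]$.

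There is a small mismatch in condition (A). The theorem states (A) without the binomial term, whereas the lemma supplies the version with $v_p(\binom nm)$, which is the one Proposition~\ref{subring_prop} actually uses. I expect this to be the main point to check. I would argue that the proof of the theorem only invokes Proposition~\ref{subring_prop} to know that $\mathcal{A}$ is a ring, and that proposition needs only the weaker binomial version of (A). So the lemma's hypotheses suffice, and $\mathcal{A}=\mathbb{C}_p\langle t|\ge\Theta_r\rangle\cap\mathbb{C}_p\langle t|v_p(\lfloor n/p^s\rfloor!)\rangle$ is a ring. It also contains $\mathbb{C}_p\langle t|\ge n\rangle$ by (C) and (C'), so in particular $e^{pct}\in\mathcal{A}$.

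With this, $\mathcal{M}'=\sum_{\alpha^\ell=1}e^{\alpha t}\mathcal{A}$, and Theorem~\ref{A,M',R_properties_thm}(4) applies since $\Theta^{(2)}=\Theta_r$. It gives $\mathcal{M}'=\bigoplus_{a\in A,\,a^\ell\equiv1}e^{at}\mathcal{A}$ for any $A$ with the uniqueness property. As noted before the corollaries, a complete set of representatives $A\subset\mathcal{O}_h$ has this property because $v_p(\mathcal{O}_h)\subset\mathbb{Z}$. Each $\alpha$ with $\alpha^\ell=1$ corresponds to the unique $a\in A$ with $a\equiv\alpha$, and these $a$ are exactly those with $a^\ell\equiv1$, since Teichmüller lifts of distinct residues are distinct.

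Finally, $\mathcal{M}'$ equals $\mathcal{M}_{p,\ell,r,s}^{la}(\mathcal{O}_{\mathbb{C}_p})$ by \eqref{R_r,s^la(O_Cp)_decomposition}, which gives the stated equality. The directness of the sum can alternatively be read off from \eqref{R_r,s^la(O_Cp)_decomposition} together with part (1) of the theorem, since $e^{at}\mathcal{A}=e^{\alpha t}\mathcal{A}$.
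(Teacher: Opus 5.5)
Your proposal is correct and is essentially the paper's argument. The paper obtains the corollary as a special case of Theorem~\ref{A,M',R_properties_thm} with $\Theta^{(1)}=v_p(\lfloor n/p^s\rfloor!)$ and $\Theta^{(2)}=\Theta_r$ (hypotheses from Lemma~\ref{concrete_Theta_satisfying_condition_lemma}), uses $v_p(\mathcal{O}_h)\subset\mathbb{Z}$ for the representatives, and uses \eqref{R_r,s^la(O_Cp)_decomposition} to identify $\mathcal{M}'$ with $\mathcal{M}_{p,\ell,r,s}^{la}(\mathcal{O}_{\mathbb{C}_p})$. Your handling of condition (A) is right and in fact necessary: the binomial-free version printed in the theorem fails for $v_p(\lfloor n/p^s\rfloor!)$ (take $n=p^{s+1}$, $m=p^s$), while the proof only needs $\mathcal{A}$ to be a ring via Proposition~\ref{subring_prop}.
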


The following Corollary is easy but useful in applications.
\begin{theorem}
\label{application_thm}
    Let $r\in[0,1]$ and let $\{c_n\}_{n=0}^{\infty}\subset\mathbb{C}_p$.
    Consider a function $\Theta:\mathbb{Z}_{\ge0}\rightarrow\mathbb{R}\cup\{\pm\infty\}$ such that $\Theta(n)\rightarrow\infty$ when $n\rightarrow\infty$, and set for each $k\in\mathbb{Z}_{\ge0}$, 
    \[n_k\coloneqq\min\{n'\in\mathbb{Z}_{\ge0}~|~\Theta(n)\ge k+r\text{ for all }n\ge n'\}.\]

    If there exist $a_1,\cdots,a_m\in\mathcal{O}_h^{\times}$, $F_1(t),\cdots,F_m(t)\in\mathbb{C}_p\langle t|\ge\Theta_r\rangle$, and $G(t)\in\mathbb{C}_p\langle t|\ge\Theta\rangle$ such that 
    \[a_i^\ell\equiv1\mod{p}\]
    for each $i=1,\cdots,m$ and
    \[\sum_{n=0}^{\infty}c_n\frac{t^n}{n!}=\sum_{i=1}^{m}\exp{(a_it)}F_i(t)+G(t),\]
    then, for each $k\in\mathbb{Z}_{\ge0}$,
    \[c_{n+\ell p^k}\equiv c_n\mod{p^{k+r}}\quad\text{for all }n\ge n_k.\]
\end{theorem}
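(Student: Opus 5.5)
Split the generating function into the two pieces given in the hypothesis and treat them separately. Write $G(t)=\sum_n g_n t^n/n!$ and $\sum_i \exp(a_it)F_i(t)=\sum_n c'_n t^n/n!$, so that $c_n=c'_n+g_n$. The goal is to show two things: $c'_{n+\ell p^k}\equiv c'_n \pmod{p^{k+r}}$ for \emph{all} $n,k\ge0$, and $v_p(g_n)\ge k+r$ for all $n\ge n_k$. The claim then follows at once. For $n\ge n_k$ we also have $n+\ell p^k\ge n_k$, hence
\[c_{n+\ell p^k}-c_n=(c'_{n+\ell p^k}-c'_n)+g_{n+\ell p^k}-g_n\equiv0 \pmod{p^{k+r}}.\]

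\textbf{The $G$ part.} This is immediate from the definitions. By definition of $\mathbb{C}_p\langle t|\ge\Theta\rangle$ we have $v_p(g_n)\ge\Theta(n)$ for every $n$. By definition of $n_k$, $\Theta(n)\ge k+r$ for all $n\ge n_k$. The set defining $n_k$ is nonempty because $\Theta(n)\to\infty$.

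\textbf{The main part.} First I replace each $a_i$ by a genuine $\ell$-th root of unity. Since $a_i\in\mathcal{O}_h^\times$ and $\ell\mid q-1$, the polynomial $x^\ell-1$ is separable mod $p$. By Hensel's lemma, the condition $a_i^\ell\equiv1 \pmod p$ gives an $\alpha_i\in\mathcal{O}_h$ with $\alpha_i^\ell=1$ and $v_p(a_i-\alpha_i)\ge1$. Write $a_i-\alpha_i=pc_i$ with $c_i\in\mathcal{O}_{\mathbb{C}_p}$. Then $e^{(a_i-\alpha_i)t}=\sum (pc_i)^n t^n/n!$ lies in $\mathbb{C}_p\langle t|\ge n\rangle$, which is contained in $\mathbb{C}_p\langle t|\ge\Theta_r\rangle$ by Lemma~\ref{concrete_Theta_satisfying_condition_lemma}(2)(a), property (C). That lemma also gives (A) and (B) for $\Theta_r$. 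So by Proposition~\ref{subring_prop}(2), $\mathbb{C}_p\langle t|\ge\Theta_r\rangle$ is a ring, and therefore
\[\exp(a_it)F_i(t)=e^{\alpha_it}\bigl(e^{(a_i-\alpha_i)t}F_i(t)\bigr)\in e^{\alpha_it}\mathbb{C}_p\langle t|\ge\Theta_r\rangle.\]
Alternatively, this step is exactly Theorem~\ref{A,M',R_properties_thm}(1) applied with $\mathcal{A}=\mathbb{C}_p\langle t|\ge\Theta_r\rangle$, i.e.\ $u=0$, $v=1$.

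Next, by \eqref{M_r^alpha(O_Cp)_gen_func_equ}, which is Proposition~\ref{Basic_prop}(1) together with Remark~\ref{a_0(f)_rmk}, each summand $e^{\alpha_it}(\cdots)$ is the generating function of a sequence in $M_r^{(\alpha_i)}(\mathcal{O}_{\mathbb{C}_p})$. Such a sequence satisfies $c_{n+p^k}\equiv\alpha_i^{p^k}c_n \pmod{p^{k+r}}$. Iterating this $\ell$ times, as observed before Proposition~\ref{internal_direct_sum_prop}, gives $c_{n+\ell p^k}\equiv\alpha_i^{\ell p^k}c_n=c_n$. The iteration keeps the modulus $p^{k+r}$ because every term is multiplied only by units. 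Summing over $i$ shows $\{c'_n\}\in M'_r$, which is the required periodicity for all $n,k$.

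The only step needing care is the Hensel lift from $a_i$ to $\alpha_i$ and checking that the correction factor $e^{(a_i-\alpha_i)t}$ lies in $\mathbb{C}_p\langle t|\ge\Theta_r\rangle$; this is precisely where the hypothesis $r\in[0,1]$ enters, through Lemma~\ref{concrete_Theta_satisfying_condition_lemma}. Everything else is bookkeeping.
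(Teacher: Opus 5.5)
Your proposal is correct and is essentially the paper's proof. You split off $G$, whose coefficients satisfy $v_p(g_n)\ge\Theta(n)\ge k+r$ for $n\ge n_k$, and you show the exponential part lies in $M'_r$ by replacing each $a_i$ with an $\ell$-th root of unity $\alpha_i\equiv a_i \pmod{p}$ and applying \eqref{M_r^alpha(O_Cp)_gen_func_equ}; the paper does this in one line by citing Theorem~\ref{A,M',R_properties_thm}(1). Your explicit Hensel lift and your direct appeal to Lemma~\ref{concrete_Theta_satisfying_condition_lemma}(2)(a) with Proposition~\ref{subring_prop}(2) only spell out that step, with one small advantage: they use the form of condition (A) that includes the $v_p\binom{n}{m}$ term, which $\Theta_r$ actually satisfies, whereas Theorem~\ref{A,M',R_properties_thm} as printed states (A) without that term.
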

\begin{proof}
    Let $\{c'_n\},\{d_n\}\subset\mathbb{C}_p$ be the sequence defined by
    \[\sum_{n=0}^{\infty}c'_n\frac{t^n}{n!}=\sum_{i=1}^{m}\exp{(a_it)}F_i(t),\quad\sum_{n=0}^{\infty}d_n\frac{t^n}{n!}=G(t).\]

    Then, by Theorem~\ref{A,M',R_properties_thm}(1), \eqref{M_r^alpha(O_Cp)_gen_func_equ}, and \eqref{M'_r(R)_Decomposition_equ}, we have $\{c'_n\}\in M'_r(\mathbb{C}_p)$.
    
    For each $k\in\mathbb{Z}_{\ge0}$, by the definition of $n_k$ and the assumption $G(t)\in\mathbb{C}_p\langle t|\ge\Theta\rangle$, it follows that
    \[v_p(d_n)\ge\Theta(n)\ge k+r\quad\text{for all }n\ge n_k.\]

    Therefore, $\{c_n\}=\{c'_n\}+\{d_n\}\in M_r(\mathbb{C}_p)$ satisfies 
    \[c_{n+\ell p^k}\equiv c_n\mod{p^{k+r}}\]
    for any $k\in\mathbb{Z}_{\ge0}$ and $n\ge n_k$.
\end{proof}

Now, we can derive \eqref{R_r_rephrase_eq}. Suppose $\ell=p^h-1$.

Let $r\in[0,1]$ and assume $\sum_{n=0}^{\infty}c_nt^n/n!\in\mathcal{R}_r$.

Since $\Theta_r(n)\ge0$ for all $n\in\mathbb{Z}_{\ge0}$ and $\lim_{n\rightarrow\infty}\Theta_r(n)=\infty$, we have
\[\mathbb{C}_p\langle t|\ge\Theta_r\rangle\subset\mathbb{C}_p\langle t|0\rangle\cap\mathbb{C}_p\langle t|\ge0\rangle,\]
and thus, $\mathcal{R}_r\subset\mathcal{M}_r(\mathcal{O}_{\mathbb{C}_p})$.

For each $k\in\mathbb{Z}_{\ge0}$, 
\[\min\{n'\in\mathbb{Z}_{\ge0}~|~\Theta_r(n)\ge k+r\text{ for all }n\ge n'\}=\begin{cases}
    0 & (\text{if }k=r=0), \\
    p^k & (\text{otherwise}).
\end{cases}\]

Therefore, by Theorem~\ref{application_thm}, we have 
\[\mathcal{R}_r\subset\left\{\sum_{n=0}^{\infty}c_n\frac{t^n}{n!}~\middle|~
    \begin{array}{c}
        \{c_n\}\subset\mathcal{O}_{\mathbb{C}_p}\text{ and }\\
        \{c_n\}\text{ satisfies \eqref{Basic_equ} with }n_k\le p^k
    \end{array}
\right\}.\]

Conversely, if a sequence $\{c_n\}_{n=0}^{\infty}\subset\mathcal{O}_{\mathbb{C}_p}$ satisfies
\[c_{n+(p^h-1)p^k}\equiv c_n\mod{p^{k+r}}\text{ for all }k\in\mathbb{Z}_{\ge0}\text{ and }n\ge p^k,\]
by Lemma~\ref{c'_n_and_d_n_lemma}, there exist $\{c'_n\},\{d_n\}\subset\mathcal{O}_{\mathbb{C}_p}$ such that 
\begin{align*}
    &\{c_n\}=\{c'_n\}+\{d_n\},\\
    &\{c'_n\}\in M'_r(\mathcal{O}_{\mathbb{C}_p}), \\
    & v_p(d_n)\ge k+r\quad\text{for all }k\in\mathbb{Z}_{\ge0}\text{ and }n\ge n_k.
\end{align*}
Then, by \eqref{M'_r(R)_Decomposition_equ} and \eqref{M_r^alpha(O_Cp)_gen_func_equ}, 
\[\sum_{n=0}^{\infty}c'_n\frac{t^n}{n!}\in\bigoplus_{\alpha^{q-1}=1}e^{\alpha t}\mathbb{C}_p\langle t|\ge\Theta_r\rangle.\]
Moreover, 
\[v_p(d_0)\ge0=\Theta_r(0)\]
because $d_n\in\mathcal{O}_{\mathbb{C}_p}$, and for any $n\ge1$, 
\[v_p(d_n)\ge k_n+r=\lfloor\log_pn\rfloor+r=\Theta_r(n)\]
where $k_n$ is a unique nonnegative integer such that $p^{k_n}\le n<p^{k_n+1}$.
Therefore, we have
\[\sum_{n=0}^{\infty}d_n\frac{t^n}{n!}\in\mathbb{C}_p\langle t|\ge\Theta_r\rangle,\]
and hence,
\[\sum_{n=0}^{\infty}c_n\frac{t^n}{n!}\in\mathcal{R}_r.\]
So we also obtain the reverse inclusion relationship
\[\left\{\sum_{n=0}^{\infty}c_n\frac{t^n}{n!}~\middle|~
    \begin{array}{c}
        \{c_n\}\subset\mathcal{O}_{\mathbb{C}_p}\text{ and }\\
        \{c_n\}\text{ satisfies \eqref{Basic_equ} with }n_k\le p^k
    \end{array}
\right\}\subset\mathcal{R}_r.\]

\subsection{Preservation of Congruences under Reciprocation}
\label{Reciprocation_subsection}
In this subsection, we consider the case of $\ell=p^h-1$ to further investigate the structure of $\mathcal{R}_r$ as a ring.

Let $\Theta_1,\cdots,\Theta_v:\mathbb{Z}_{\ge0}\rightarrow\mathbb{R}\cup\{\pm\infty\}$ be functions satisfying the following conditions: for all $1\le i\le v$,
\begin{enumerate}[label=(\Alph*)]
    \item $\Theta_i(m)+\Theta_i(n-m)\ge\Theta_i(n)$ for all $n\ge m\ge1$;
    \item $\Theta_i(0)=0$;
    \item $\mathbb{C}_p\langle t|\ge n\rangle\subset\mathbb{C}_p\langle t|\ge\Theta_i\rangle$,
\end{enumerate}
and for at least one $i$, which we may assume without loss of generality to be $i=1$,
\begin{enumerate}[label=(\Alph*), start=4]
    \item $\lim_{n\rightarrow\infty}\Theta_1(n)=\infty$.
\end{enumerate}

Let 
\[\mathcal{A}=\bigcap_{i=1}^{v}\mathbb{C}_p\langle T|\ge\Theta_i\rangle\quad\text{and}\quad\mathcal{R}=\sum_{\alpha^q=\alpha}e^{\alpha t}\mathcal{A}.\]
By Proposition~\ref{subring_prop} and Theorem~\ref{A,M',R_properties_thm}, $\mathcal{A}$ and $\mathcal{R}$ are subrings of $\mathbb{C}_p\llbracket t\rrbracket$ and, in particular, are integral domains. Let
\[
\mathcal{K}=\operatorname{Frac}(\mathcal{A})\quad\text{and}\quad\mathcal{L}=\operatorname{Frac}(\mathcal{R})
\]
be their fields of fractions.

Fix a $x\in\mathcal{O}_h$ such that $\{1,\overline{x},\cdots,\overline{x}^{h-1}\}$ forms a basis of the $\mathbb{F}_{p}$-vector space $\mathcal{O}_h/\mathfrak{m}_h\cong\mathbb{F}_{q}$, where $\overline{x}$ denotes the reduction of $x$ in $\mathcal{O}_h/\mathfrak{m}_h$.

The goal of this subsection is to determine $\mathcal{R}^{\times}$ by using the norm map of the field extension $\mathcal{L}/\mathcal{K}$.

\begin{lemma}
\label{(e^ut_notin_K_lemma}
    If $u\in\mathcal{O}_{\mathbb{C}_p}^{\times}$, then $e^{ut}\notin\mathcal{K}$.
\end{lemma}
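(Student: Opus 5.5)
The plan is to argue by contradiction, reducing the statement to the uniqueness of Mahler expansions via the computation in the proof of Proposition~\ref{Basic_prop}. Suppose $e^{ut}\in\mathcal{K}$. Then $e^{ut}=F(t)/G(t)$ for some $F,G\in\mathcal{A}$ with $G\neq0$, so that
\[e^{ut}G(t)=F(t).\]
Write $G(t)=\sum_{n\ge0}g_n\frac{t^n}{n!}$, $F(t)=\sum_{n\ge0}c_n\frac{t^n}{n!}$. Since $\mathcal{A}\subset\mathbb{C}_p\langle t|\ge\Theta_1\rangle$ and $\Theta_1(n)\to\infty$ by (D), we get $v_p(g_n)\ge\Theta_1(n)\to\infty$ and $v_p(c_n)\ge\Theta_1(n)\to\infty$. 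In other words, both $F$ and $G$ lie in $\mathbb{C}_p\langle t|0\rangle$.

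Next I would read off a continuous function from $G$, as in the proof of Proposition~\ref{Basic_prop} with $\alpha=u$. Since $G=e^{-ut}F$, we have
\[u^{-n}g_n=\sum_{m=0}^{n}\binom{n}{m}(-1)^{n-m}u^{-m}c_m .\]
Hence $\{u^{-n}g_n\}$ are the iterated differences at $0$ of the function $n\mapsto u^{-n}c_n$ on $\mathbb{Z}_{\ge0}$. Because $v_p(u)=0$, we have $v_p(u^{-n}g_n)=v_p(g_n)\to\infty$. Remark~\ref{converse_rmk}(2) and Theorem~\ref{Mahler_expansion_thm} then give a continuous $f:\mathbb{Z}_p\to\mathbb{C}_p$ with $f(n)=u^{-n}c_n$ for $n\in\mathbb{Z}_{\ge0}$ and Mahler coefficients $a_n(f)=u^{-n}g_n$.

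The key step is to show that $f$ vanishes identically. We know $v_p(f(n))=v_p(c_n)\to\infty$ as $n\to\infty$ through nonnegative integers. Fix $x\in\mathbb{Z}_p$ and choose integers $n_k\ge k$ with $n_k\equiv x\pmod{p^k}$. Then $n_k\to x$ $p$-adically while $n_k\to\infty$ in the usual sense. By continuity, $f(x)=\lim_k f(n_k)=0$. So $f\equiv0$, and uniqueness of Mahler coefficients gives $u^{-n}g_n=0$ for all $n$, that is, $G=0$. This contradicts $G\neq0$, so $e^{ut}\notin\mathcal{K}$.

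The only place needing care is this last step. The decay of $v_p(c_n)$ has to be turned into vanishing of $f$ on all of $\mathbb{Z}_p$, and this uses that every $p$-adic integer is a limit of arbitrarily large positive integers. Conditions (A)–(C) are not needed here beyond the fact that $\mathcal{A}$ is a domain, so $\mathcal{K}$ makes sense. Equivalently, the argument shows that $e^{ut}G\in\mathbb{C}_p\langle t|0\rangle$ forces $G=0$ for $G\in\mathbb{C}_p\langle t|0\rangle$, which mirrors the directness of the sum $M'_r(R)\oplus N(R)$ in Proposition~\ref{internal_direct_sum_prop}.
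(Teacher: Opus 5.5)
Your proof is correct, but it takes a different route from the paper's.

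The paper works directly with coefficients. It writes $A=e^{ut}B$, lets $j$ be the index of the first nonzero coefficient $b_j$ of $B$, and uses
$a_{n+j}=\sum_{m=0}^{n}\binom{n+j}{m+j}b_{m+j}u^{n-m}$. It then picks $n$ divisible by a high power of $p$. Kummer's theorem shows that $\binom{n+j}{j}$ is a $p$-adic unit while every term with $m\ge1$ has larger valuation. Hence $v_p(a_{n+j})=v_p(b_j)$ stays bounded, contradicting $v_p(a_{n+j})\ge\Theta_1(n+j)\to\infty$.

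In your language this is exactly the statement $f(n+j)\to f(j)=u^{-j}b_j\neq0$ as $n+j\to j$ $p$-adically through archimedean-large integers. Indeed $u^{-(n+j)}a_{n+j}=f(n+j)$, and $f(j)=u^{-j}b_j$ because $b_k=0$ for $k<j$. So the paper's Kummer-theorem estimates are a hands-on proof of the continuity of your $f$ at the single point $j$.

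Your version gets that continuity from Mahler's theorem and the density of large integers in $\mathbb{Z}_p$. This is shorter and avoids the delicate valuation bookkeeping. It also shows that the lemma is the same phenomenon as the directness of $M'_r(R)\oplus N(R)$ in Proposition~\ref{internal_direct_sum_prop}: you actually prove that $G\in\mathbb{C}_p\langle t|0\rangle$ and $e^{ut}G\in\mathbb{C}_p\langle t|0\rangle$ force $G=0$ for any unit $u$. The paper's argument, in exchange, is purely elementary and does not invoke Mahler's theorem.
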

\begin{proof}
    Assume that there exists $A(t),B(t)\in\mathcal{A}\setminus\{0\}$ such that $e^{ut}=A(t)/B(t)$
    Let
    \[A(t)=\sum_{n=0}^{\infty}a_n\frac{t^n}{n!},\quad B(t)=\sum_{n=0}^{\infty}b_n\frac{t^n}{n!}.\]

    Let $j\ge0$ be the minimum nonnegative integer such that $v_p(b_j)\ne0$.
    
    Then, we have
    \begin{equation}
    \label{an_equ}
        a_{n+j}=\sum_{m=0}^{n}\binom{n+j}{m+j}b_{m+j}u^{n-m}\quad\text{for all }n\ge0.
    \end{equation}
    
    Take a positive integer $N$ such that $v_p(b_j)<v_p(j!)+N$.
    By the Assumption (D) $\lim_{n\rightarrow\infty}\Theta_1(n)=\infty$, there are some $M$ such that 
    \[\Theta_1(m)>v_p(j!)+N\quad\text{for all }m\ge M.\]
    Since $v_p(b_j)\ge\Theta_1(j)$, we have $M>j$.
    Take a positive integer $K$ such that $\min_{1\le i<M}(v_p(b_i)-v_p(i!))>-K$, and 
    take a positive integer $L$ such that $j\le p^L$.
    
    Let $n=Mp^{\min\{N+K,L\}}$.

    Then, since $v_p(n)\ge L$ and $p^L\ge j$, by Kummer's theorem (Proposition~\ref{val_of_binom_coeff_prop}), we have
    \begin{equation}
    \label{v_p((n+j)!)=v_p(n!)+v_p(j!)_equ}
        v_p\left(\binom{n+j}{j}\right)=v_p((n+j)!)-v_p(n!)-v_p(j!)=0.
    \end{equation}

    We have 
    \[v_p(a_{n+j})\ge\Theta_1(n+j)>v_p(j!)+N\quad\quad(\text{since }n+j\ge n\ge M)\]
    and for all $1\le m< M-j$,
    \begin{align*}
        v_p\left(\binom{n+j}{m+j}b_{m+j}u^{n-m}\right)&\ge\min_{1\le i< M}(v_p(b_i)-v_p(i!))+v_p((n+j)!)-v_p((n-m)!)\\
        &>-K+v_p((n-j)!)-v_p(n!)+v_p(n)\quad(\text{since }n-m<n)\\
        &\ge v_p((n-j)!)-v_p(n!)+N\quad\quad\quad(\text{by the definition of }n)\\
        &=v_p(j!)+N\qquad\qquad\qquad(\text{by \eqref{v_p((n+j)!)=v_p(n!)+v_p(j!)_equ}}),
    \end{align*}
    and for all $M-j\le m\le n$,
    \[
        v_p\left(\binom{n+j}{m+j}b_{m+j}u^{n-m}\right)\ge v_p(b_{m+j})\ge\Theta_1(m+j)\ge v_p(j!)+N.
    \]
    On the other hand, we have 
    \[v_p\left(\binom{n+j}{j}b_ju^n\right)=v_p(b_j)<v_p(j!)+N\]
    by \eqref{v_p((n+j)!)=v_p(n!)+v_p(j!)_equ}, and this contradicts the equation \eqref{an_equ}.
\end{proof}

\begin{lemma}
\label{L=K((1+pT)^x^i/p)_lem}
    As a field, $\mathcal{L}=\mathcal{K}[\exp(t),\exp(xt),\cdots,\exp(x^{h-1}t)]$.
\end{lemma}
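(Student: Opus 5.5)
The plan is to prove the two inclusions separately. The right-hand side $\mathcal{K}[\exp(t),\exp(xt),\cdots,\exp(x^{h-1}t)]$ is the subring of $\mathcal{L}$ generated over $\mathcal{K}$ by these exponentials. I will first show that each $\exp(x^it)$ is algebraic over $\mathcal{K}$, so this ring is a field. Then I will show that it contains $\mathcal{R}$, and hence equals $\mathcal{L}=\operatorname{Frac}(\mathcal{R})$.

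\textbf{Step 1: the generators lie in $\mathcal{L}$.} By Theorem~\ref{A,M',R_properties_thm}(1) we may replace $B=\{\alpha\mid\alpha^q=\alpha\}$ by any set $A$ of representatives of $\mathcal{O}_h/\mathfrak{m}_h$. Since $1,\overline{x},\cdots,\overline{x}^{h-1}$ is an $\mathbb{F}_p$-basis, we take
\[A=\Bigl\{\textstyle\sum_{i=0}^{h-1}k_ix^i~\Bigm|~0\le k_i\le p-1\Bigr\}.\]
Each generator $x^i$ is itself in $A$ (take $k_i=1$ and the other digits $0$), so $e^{x^it}\in e^{x^it}\mathcal{A}\subset\mathcal{R}\subset\mathcal{L}$.

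\textbf{Step 2: $\mathcal{R}$ lies in the right-hand side.} We have $\mathcal{R}=\sum_{a\in A}e^{at}\mathcal{A}$. For $a=\sum k_ix^i$, the element $e^{at}=\prod_i(e^{x^it})^{k_i}$ is a monomial in the generators, so $\mathcal{R}\subset\mathcal{K}[e^t,\cdots,e^{x^{h-1}t}]$.

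\textbf{Step 3: each generator is algebraic over $\mathcal{K}$.} Write $y=e^{x^it}$. Then $y^p=e^{px^it}$, and since $v_p(px^i)\ge1$, the series $e^{px^it}$ lies in $\mathbb{C}_p\langle t|\ge n\rangle$. By condition (C) this is contained in every $\mathbb{C}_p\langle t|\ge\Theta_j\rangle$, hence in $\mathcal{A}\subset\mathcal{K}$. So $y$ is a root of $Y^p-e^{px^it}\in\mathcal{K}[Y]$. Consequently $\mathcal{K}[e^t,\cdots,e^{x^{h-1}t}]$ is a finitely generated algebraic extension ring of a field, and is therefore a field.

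\textbf{Conclusion.} This field contains $\mathcal{R}$ by Step 2, so it contains $\operatorname{Frac}(\mathcal{R})=\mathcal{L}$. Conversely it is contained in $\mathcal{L}$, because $\mathcal{K}\subset\mathcal{L}$ (as $\mathcal{A}\subset\mathcal{R}$) and by Step 1. This gives equality.

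The only mildly delicate point is the step that a ring generated by algebraic elements over a field is a field, which is standard since $\mathcal{K}[y]=\mathcal{K}(y)$ for algebraic $y$. Lemma~\ref{(e^ut_notin_K_lemma} is not needed here; it will presumably be used later to pin down the degree $[\mathcal{L}:\mathcal{K}]$.
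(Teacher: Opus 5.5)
Your proof is correct and follows the paper's own two-inclusion argument: $\mathcal{R}=\mathcal{A}[\exp(t),\cdots,\exp(x^{h-1}t)]$ via the representative set $\{\sum k_ix^i\}$ from Theorem~\ref{A,M',R_properties_thm}(1), and then minimality of $\mathcal{L}=\operatorname{Frac}(\mathcal{R})$ in one direction and $\mathcal{K},\exp(x^it)\subset\mathcal{L}$ in the other. Your Step~3 adds something the paper leaves implicit. The paper applies minimality of $\mathcal{L}$ against $\mathcal{K}[\exp(t),\cdots,\exp(x^{h-1}t)]$ without checking that this ring is a field, and you supply that check: each $\exp(x^it)$ is a root of $Y^p-e^{px^it}$ with $e^{px^it}\in\mathcal{A}$ by condition (C), so the ring adjunction already coincides with the field adjunction.
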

\begin{proof}
    Note that
    \[\mathcal{A}[\exp(t),\exp(xt),\cdots,\exp(x^{h-1}t)]=\mathcal{R}\]
    is the smallest subring of $\mathbb{C}_p\llbracket t\rrbracket$ that contains $\mathcal{A}$ and $\exp{(t)},\cdots,\exp(x^{h-1}t)$. Therefore, 
    \[\mathcal{R}\subset\mathcal{K}[\exp(t),\exp(xt),\cdots,\exp(x^{h-1}t)].\] Since $\mathcal{L}$ is the smallest field containing $\mathcal{R}$, we have 
    \[\mathcal{L}\subset\mathcal{K}[\exp(t),\exp(xt),\cdots,\exp(x^{h-1}t)].\]

    Now, $\exp(t),\cdots,\exp(x^{h-1}t)\in\mathcal{R}\subset\mathcal{L}$. 
    Since $\mathcal{A}\subset\mathcal{R}\subset\mathcal{L}$ and $\mathcal{K}$ is the smallest field containing $\mathcal{A}$, $\mathcal{K}\subset\mathcal{L}$.
    Thus, \[\mathcal{K}[\exp(t),\exp(xt),\cdots,\exp(x^{h-1}t)]\subset\mathcal{L}.\]
\end{proof}

\begin{proposition}
\label{Galois_ext_prop}
    The field extension $\mathcal{L}/\mathcal{K}$ is a Galois extension of degree $p^h$, and its Galois group is \[\operatorname{Gal}(\mathcal{L}/\mathcal{K})\coloneqq\left\langle\sigma_i:\mathcal{L}\rightarrow\mathcal{L}\text{ }\middle|\text{ }\sigma_i(\exp(x^jt))=\zeta_p^{i\cdot\delta_{ij}}\exp{(x^jt)},\text{ }0\le i,j\le h-1\right\rangle\cong(\mathbb{Z}/p\mathbb{Z})^h\]
    where $\zeta_p$ is a primitive $p$-th root of unity, and $\delta_{ij}$ is the Kronecker delta.
\end{proposition}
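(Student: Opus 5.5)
Set $E_j=\exp(x^jt)$ for $0\le j\le h-1$. The plan is to exhibit $\mathcal{L}$ as a Kummer-type extension of $\mathcal{K}$ generated by $p$-th roots, and then to use Lemma~\ref{(e^ut_notin_K_lemma} to control the degree.

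\textbf{Step 1: each $E_j$ is a root of $X^p-E_j^p$ over $\mathcal{K}$.} We have $E_j^p=\exp(px^jt)\in\mathbb{C}_p\langle t|\ge n\rangle$. By condition (C) for each $\Theta_i$, this space lies in $\mathcal{A}$, so $E_j^p\in\mathcal{K}$. The field $\mathcal{K}$ contains $\mathbb{C}_p$ (constants lie in $\mathcal{A}$ up to scaling, and $\mathcal{K}$ is a field), so $\mathcal{K}$ contains $\zeta_p$. Hence $X^p-E_j^p$ splits over $\mathcal{K}(E_j)$ with the distinct roots $\zeta_p^kE_j$. By Lemma~\ref{L=K((1+pT)^x^i/p)_lem}, $\mathcal{L}=\mathcal{K}(E_0,\dots,E_{h-1})$ is a compositum of splitting fields of separable polynomials (characteristic $0$), so $\mathcal{L}/\mathcal{K}$ is Galois.

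\textbf{Step 2: the degree is exactly $p^h$, via Kummer theory.} The extension is generated by $p$-th roots of the elements $E_j^p$, so its Galois group embeds into $\operatorname{Hom}(\Gamma,\mu_p)$, where $\Gamma$ is the subgroup of $\mathcal{K}^\times/(\mathcal{K}^\times)^p$ generated by the classes of the $E_j^p$. Thus $[\mathcal{L}:\mathcal{K}]=|\Gamma|$, and it suffices to show $|\Gamma|=p^h$. Suppose instead that some product $\prod_jE_j^{pk_j}$ with $0\le k_j<p$, not all $k_j=0$, is a $p$-th power in $\mathcal{K}$. Since
\[
\prod_jE_j^{pk_j}=\bigl(\exp(ut)\bigr)^p,\qquad u=\sum_jk_jx^j,
\]
we get $\exp(ut)^p=y^p$ for some $y\in\mathcal{K}$, so $\exp(ut)=\zeta y\in\mathcal{K}$ for some $p$-th root of unity $\zeta$. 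The reductions $\bar x^j$ form an $\mathbb{F}_p$-basis of $\mathbb{F}_q$ and not all $k_j$ vanish, so $\bar u\neq0$, i.e.\ $u\in\mathcal{O}_{\mathbb{C}_p}^\times$. This contradicts Lemma~\ref{(e^ut_notin_K_lemma}. Hence $\Gamma\cong(\mathbb{Z}/p\mathbb{Z})^h$ and $[\mathcal{L}:\mathcal{K}]=p^h$.

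\textbf{Step 3: the explicit generators.} Each element of the Galois group sends $E_j$ to $\zeta_p^{a_j}E_j$, and it is determined by the vector $(a_j)$. Since the group has order $p^h$, every vector $(a_j)\in(\mathbb{Z}/p)^h$ occurs. In particular the automorphisms $\sigma_i$, which multiply $E_i$ by $\zeta_p$ and fix the other $E_j$, exist and generate the group, which is therefore $(\mathbb{Z}/p\mathbb{Z})^h$.

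The main obstacle is the reduction in Step 2: showing that a multiplicative relation among the $E_j$ modulo $p$-th powers forces $\exp(ut)\in\mathcal{K}$ with $u$ a unit. This needs $\zeta_p\in\mathcal{K}$ and the basis property of $\bar x$. Lemma~\ref{(e^ut_notin_K_lemma} then closes the argument.
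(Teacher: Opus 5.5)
Your proof is correct and follows the paper's route: Kummer theory reduces everything to the $\mathbb{F}_p$-independence of the classes of $\exp(px^jt)$ in $\mathcal{K}^\times/(\mathcal{K}^\times)^p$. Like the paper, you get this from the lemma that $e^{ut}\notin\mathcal{K}$ for $u\in\mathcal{O}_{\mathbb{C}_p}^\times$, using $\mathbb{C}_p\subset\mathcal{K}$ and the fact that $1,\bar x,\dots,\bar x^{h-1}$ is a basis. Your Steps~1 and~3 only spell out the Galois property and the explicit generators that the paper leaves inside ``by Kummer theory''. One small point: the bound $[\mathcal{L}:\mathcal{K}]\ge|\Gamma|$ that you actually need comes from the injectivity of $\Gamma\to\operatorname{Hom}(\operatorname{Gal}(\mathcal{L}/\mathcal{K}),\mu_p)$, i.e.\ the other half of the Kummer pairing, not from the embedding you cite.
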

\begin{proof}
    By Kummer theory, it suffices to show that $\{\exp(pt),\exp(pxt),\cdots,\exp(px^{h-1}t)\}$ is linearly independent in the multiplicative group $\mathcal{K}^{\times}/(\mathcal{K}^{\times})^p$ as a $\mathbb{F}_p$-vector space.

    Let $a_0,\cdots,a_{h-1}\in\{0,1,\cdots,p-1\}$ and assume 
    \[\exp(put)\in(\mathcal{K}^{\times})^p\quad\text{where }u=a_0+a_1x+\cdots+a_{h-1}x^{h-1}.\]
    Then, we have $\exp(ut)\in\mathcal{K}$ because $\mathbb{C}_p\subset\mathcal{K}$, but by Lemma~\ref{(e^ut_notin_K_lemma}, this implies $v_p(u)>0$. Then, since $x$ is chosen so that $\{1,\overline{x},\cdots,\overline{x}^{h-1}\}$ is a basis of $\mathcal{O}_h/\mathfrak{m}_h$, we have $a_0=\cdots=a_{h-1}=0$.
\end{proof}

\begin{proposition}
\label{norm_and_reciprocal_prop}
    $\mathcal{R}^\times=\{F(t)\in\mathcal{R}~|~v_p(F(0))=0\}$.
\end{proposition}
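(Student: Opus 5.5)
The plan is to prove the two inclusions separately, using the norm map of the Galois extension $\mathcal{L}/\mathcal{K}$ from Proposition~\ref{Galois_ext_prop} for the nontrivial one.

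\emph{Units have unit constant term.} By condition (B) each $\Theta_i(0)=0$, so every element of $\mathcal{A}$ has constant term in $\mathcal{O}_{\mathbb{C}_p}$. Each $e^{\alpha t}$ has constant term $1$, so every $F\in\mathcal{R}$ has $F(0)\in\mathcal{O}_{\mathbb{C}_p}$. If $FG=1$ with $F,G\in\mathcal{R}$, then $F(0)G(0)=1$ with both factors integral, hence $v_p(F(0))=0$.

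\emph{Unit constant term implies unit.} Let $F\in\mathcal{R}$ with $v_p(F(0))=0$, and put $G=\prod_{\sigma\ne1}\sigma(F)$, the product over $\operatorname{Gal}(\mathcal{L}/\mathcal{K})$. Then $N(F)=FG$. The steps are as follows.

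\emph{Step 1: each $\sigma$ preserves $\mathcal{R}$.} By Lemma~\ref{L=K((1+pT)^x^i/p)_lem}, $\mathcal{R}=\mathcal{A}[e^{t},e^{xt},\dots,e^{x^{h-1}t}]$. Each $\sigma$ fixes $\mathcal{A}$ and sends $e^{x^jt}$ to $\zeta_p^{\epsilon}e^{x^jt}$. Since $\zeta_p\in\mathcal{O}_{\mathbb{C}_p}$ and $\mathcal{A}$ is an $\mathcal{O}_{\mathbb{C}_p}$-algebra, $\sigma(\mathcal{R})\subset\mathcal{R}$. Hence $G\in\mathcal{R}$ and $N(F)\in\mathcal{R}\cap\mathcal{K}$.

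\emph{Step 2: the constant term of $N(F)$ is a unit.} Write $F$ as a polynomial in the $e^{x^jt}$ with coefficients in $\mathcal{A}$. Since $\zeta_p\equiv1\pmod{\mathfrak{m}_{\mathbb{C}_p}}$, we get $\sigma(F)(0)\equiv F(0)$ modulo the maximal ideal. So each $\sigma(F)(0)$ is a unit, and therefore so is $N(F)(0)$.

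\emph{Step 3 (main obstacle): $\mathcal{R}\cap\mathcal{K}=\mathcal{A}$.} The idea is to use the $\mathcal{K}$-linear independence of the exponentials $e^{at}$, where $a$ runs over representatives $a=\sum_j a_jx^j$ with $0\le a_j\le p-1$. These are simultaneous eigenvectors of $\operatorname{Gal}(\mathcal{L}/\mathcal{K})\cong(\mathbb{Z}/p\mathbb{Z})^h$ with pairwise distinct characters $\sigma_i\mapsto\zeta_p^{a_i}$. By the standard argument (Dedekind/Artin independence of eigenvectors with distinct characters), they are linearly independent over $\mathcal{K}$.
\begin{itemize}
\item By Theorem~\ref{A,M',R_properties_thm}(1), write $N(F)=\sum_a e^{at}C_a$ with $C_a\in\mathcal{A}$.
\item Since $N(F)\in\mathcal{K}$, write $N(F)=P/Q$ with $P,Q\in\mathcal{A}$ and $Q\neq0$.
\item The identity $\sum_a e^{at}QC_a-P=0$ and linear independence give $QC_a=0$ for all $a\ne0$, so $C_a=0$ since $\mathcal{A}$ is a domain.
\item Therefore $N(F)=C_0\in\mathcal{A}$.
\end{itemize}
Both this independence and Step~1 depend on the explicit form of the Galois action in Proposition~\ref{Galois_ext_prop}, which is why I expect this step to require the most care.

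\emph{Step 4: conclude.} $N(F)\in\mathcal{A}=\bigcap_i\mathbb{C}_p\langle t|\ge\Theta_i\rangle$ has unit constant term. By Proposition~\ref{subring_prop}(2), applied to each $\Theta_i$, $N(F)^{-1}\in\mathbb{C}_p\langle t|\ge\Theta_i\rangle$ for every $i$, so $N(F)^{-1}\in\mathcal{A}$. Hence $F^{-1}=G\,N(F)^{-1}\in\mathcal{R}$.
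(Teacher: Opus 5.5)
Your proof is correct and follows the paper's argument. Both use the norm of $\mathcal{L}/\mathcal{K}$, the fact that $\prod_{\sigma\ne1}\sigma(F)\in\mathcal{R}$ (the Galois action only multiplies each $e^{x^jt}$ by a $p$-th root of unity), and the reduction $\zeta_p\equiv1$ modulo the maximal ideal, which makes the constant term of the norm a unit exactly when $F(0)$ is. The differences are minor: you prove the easy inclusion directly from $F(0)G(0)=1$, and you get $\mathbf{Nr}_{\mathcal{L}/\mathcal{K}}(F)\in\mathcal{A}$ from $\mathcal{R}\cap\mathcal{K}=\mathcal{A}$, via independence of the $e^{\mathbf{a}(x)t}$ over $\mathcal{K}$, whereas the paper takes the determinant of multiplication by $F$ in the basis $\{e^{\mathbf{a}(x)t}\}$, whose entries lie in $\mathcal{A}$.
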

\begin{proof}
    Denote by $\textbf{Nr}_{\mathcal{L}/\mathcal{K}}$ the norm map of the field extension $\mathcal{L}/\mathcal{K}$.
    Define
    \[A=\{\mathbf{a}=(a_0,\cdots,a_{h-1})~|~a_0,\cdots,a_{h-1}\in\{0,1,\cdots,p-1\}\}\]
    and set
    \[\mathbf{a}(x)=a_0+a_1x+\cdots+a_{h-1}x^{h-1}\]
    for each $\mathbf{a}=(a_0,\cdots,a_{h-1})\in A$.
    
    Let $F(t)\in\mathcal{R}$. We show the equivalence of conditions
    \[F(t)\in\mathcal{R}^\times\Leftrightarrow \textbf{Nr}_{\mathcal{L}/\mathcal{K}}(F(t))\in\mathcal{A}^{\times}\Leftrightarrow v_p(F(0))=0.\]
    
    By Theorem~\ref{A,M',R_properties_thm}(1), we can write
    \[F(t)=\sum_{\mathbf{a}\in A}e^{\mathbf{a}(x)}F_{\mathbf{a}}(t)\]
    with some $\{F_{\mathbf{a}}(t)\}_{\mathbf{a}\in A}\subset\mathcal{A}$.

    Because the degree of the minimal polynomial of $\exp{(x^it)}$ is $p$ and $[\mathcal{L}:\mathcal{K}]=p^h$, $\{\exp(\mathbf{a}(x)t)~|~\mathbf{a}\in A\}$ is a basis of the $\mathcal{K}$-linear space $\mathcal{L}$.
    Since $\textbf{Nr}_{\mathcal{L}/\mathcal{K}}(F(t))$ is the determinant of the matrix of a linear map $\mathcal{L}\rightarrow\mathcal{L};X\mapsto F(t)X$, considering the matrix with respect to the basis $\{\exp(\mathbf{a}(x)t)~|~\mathbf{a}\in A\}$, we obtain $\textbf{Nr}_{\mathcal{L}/\mathcal{K}}(F(t))\in\mathcal{A}$.

    Also, by Proposition~\ref{Galois_ext_prop}, 
    \[\prod_{\substack{\sigma\in\operatorname{Gal}(\mathcal{L}/\mathcal{K})}\\\sigma\ne 1}\sigma(F(t))\in\mathcal{R}\]
    because $\zeta_p\in\mathcal{R}$. (Note that $\mathcal{O}_{\mathbb{C}_p}\subset\mathcal{A}$ by the assumption (B) at the beginning of this subsection.)

    Assume $F(t)\in\mathcal{R}^\times$. Then, since $1/F(t)\in\mathcal{A}$, we have $\textbf{Nr}_{\mathcal{L}/\mathcal{K}}(1/F(t))\in\mathcal{A}$ too. Thus
    \[\textbf{Nr}_{\mathcal{L}/\mathcal{K}}(F(t))=\frac{1}{\textbf{Nr}_{\mathcal{L}/\mathcal{K}}(1/F(t))}\in\mathcal{A}^\times.\]

    Conversely, if $\textbf{Nr}_{\mathcal{L}/\mathcal{K}}(F(t))\in\mathcal{A}$, it follows that
    \[\frac{1}{F(t)}=\frac{1}{A(t)}\prod_{\substack{\sigma\in\operatorname{Gal}(\mathcal{L}/\mathcal{K})}\\\sigma\ne 1}\sigma(F(t))\in\mathcal{R}.\]

    Let $\pi:\mathcal{O}_{\mathbb{C}_p}\rightarrow\overline{\mathbb{F}_p}$ be the reduction map. Note that $F_\mathbf{a}(0)\in\mathcal{O}_{\mathbb{C}_p}$ by the assumption (B), so
    \begin{align*}
        \textbf{Nr}_{\mathcal{L}/\mathcal{K}}(F(t))\in\mathcal{A}^\times&\Leftrightarrow v_p\Bigl(\textbf{Nr}_{\mathcal{L}/\mathcal{K}}(F(t))\Bigm|_{t=0}\Bigr)=0\quad(\text{by Proposition~\ref{subring_prop}})\\
        &\Leftrightarrow v_p\left(\prod_{i_0=0}^{p-1}\cdots\prod_{i_{h-1}=0}^{p-1}\sum_{\mathbf{a}=(a_0,\cdots,a_{h-1})\in A}F_{\mathbf{a}}(0)\zeta_p^{i_0a_0+\cdots+i_{h-1}a_{h-1}}\right)=0\\
        &\Leftrightarrow\pi\left(\left(\sum_{\mathbf{a}\in A}F_{\mathbf{a}}(0)\right)^q\right)\in\overline{\mathbb{F}_p}^\times\quad(\text{since }\pi(\zeta_p^i)=1\text{ for any }i\in\mathbb{Z})\\
        &\Leftrightarrow\pi\left(\sum_{\mathbf{a}\in A}F_\mathbf{a}(0)\right)=\pi(F(0))\in\overline{\mathbb{F}_p}^\times\\
        &\Leftrightarrow v_p(F(0))=0.
    \end{align*}
\end{proof}

Combining Theorem~\ref{norm_and_reciprocal_prop} with the result obtained in the last subsection, we have the following corollaries. 

\begin{corollary}
\label{Reciplocal_cor}
    Let $r\in[0,1]$, and
    \[
    F(t)=\sum_{n=0}^{\infty}c_n\frac{t^n}{n!}
    \]
    be the generating function of a sequence $\{c_n\}_{n=0}^{\infty}\subset\mathcal{O}_{\mathbb{C}_p}$ with $v_p(c_0)=0$. Let $\{d_n\}_{n=0}^{\infty}$ be the sequence whose generating function is
    \[
    \frac{1}{F(t)}=\sum_{n=0}^{\infty}d_n\frac{t^n}{n!}.
    \]

    Then the following conditions are equivalent.

    (a) $\{c_n\}$ satisfies the congruence
    \[c_{n+\ell p^k}\equiv c_n\mod{p^{k+r}}\quad\text{for all }k\in\mathbb{Z}_{\ge0}\text{ and }n\ge p^k.\]

    (b) $\{d_n\}$ satisfies the congruence
    \[d_{n+\ell p^k}\equiv d_n\mod{p^{k+r}}\quad\text{for all }k\in\mathbb{Z}_{\ge0}\text{ and }n\ge p^k.\]
\end{corollary}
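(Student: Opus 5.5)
The plan is to reduce both conditions to membership in the ring $\mathcal{R}_r$ and then apply Proposition~\ref{norm_and_reciprocal_prop}. Throughout, $\ell=q-1=p^h-1$, as in the setting of this subsection. The key tool is the characterization \eqref{R_r_rephrase_eq}, derived just after Theorem~\ref{application_thm}. It says that for $r\in[0,1]$, a power series $\sum c_n t^n/n!$ lies in $\mathcal{R}_r$ if and only if $\{c_n\}\subset\mathcal{O}_{\mathbb{C}_p}$ and $c_{n+\ell p^k}\equiv c_n\pmod{p^{k+r}}$ for all $k\ge0$ and $n\ge p^k$.

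The hypotheses of the present subsection need to be checked for $\mathcal{R}_r$. Take $v=1$ and $\Theta_1=\Theta_r$. By Lemma~\ref{concrete_Theta_satisfying_condition_lemma}(2)(a), $\Theta_r$ with $r\in[0,1]$ satisfies (A), (B), (C) and (D). Hence $\mathcal{A}=\mathbb{C}_p\langle t|\ge\Theta_r\rangle$, and $\mathcal{R}=\mathcal{R}_r$ is exactly the ring to which Proposition~\ref{norm_and_reciprocal_prop} applies. That proposition gives $\mathcal{R}_r^\times=\{G\in\mathcal{R}_r\mid v_p(G(0))=0\}$.

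For (a)$\Rightarrow$(b): since $\{c_n\}\subset\mathcal{O}_{\mathbb{C}_p}$ and (a) holds, \eqref{R_r_rephrase_eq} gives $F\in\mathcal{R}_r$. Because $v_p(F(0))=v_p(c_0)=0$, $F$ is a unit, so $1/F\in\mathcal{R}_r$. Applying \eqref{R_r_rephrase_eq} again, $\{d_n\}\subset\mathcal{O}_{\mathbb{C}_p}$ and (b) holds.

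For (b)$\Rightarrow$(a), the argument is symmetric, but I first need $\{d_n\}\subset\mathcal{O}_{\mathbb{C}_p}$ before \eqref{R_r_rephrase_eq} can be applied. This follows from Proposition~\ref{subring_prop}(2) with $\Theta\equiv0$, which satisfies (A) and (B) trivially. Since $F\in\mathbb{C}_p\langle t|\ge0\rangle$ and $v_p(c_0)=0$, that proposition gives $1/F\in\mathbb{C}_p\langle t|\ge0\rangle$. Alternatively, one can run the recursion $d_0=1/c_0$, $d_n=-c_0^{-1}\sum_{m\ge1}\binom{n}{m}c_md_{n-m}$ directly. Also $v_p(d_0)=-v_p(c_0)=0$. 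So (b) gives $1/F\in\mathcal{R}_r^\times$, and hence $F\in\mathcal{R}_r$, which by \eqref{R_r_rephrase_eq} is (a).

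The main obstacle is already absorbed into Proposition~\ref{norm_and_reciprocal_prop}. The only point needing care here is confirming that the phrase ``with $n_k\le p^k$'' in \eqref{R_r_rephrase_eq} matches ``for all $n\ge p^k$'' in the statement. I would also check the edge case $k=r=0$, where the congruence modulo $p^0$ is vacuous.
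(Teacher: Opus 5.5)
Your proposal is correct and matches the paper's argument: the paper obtains this corollary by combining the characterization \eqref{R_r_rephrase_eq} of $\mathcal{R}_r$ (with $\ell=p^h-1$) with the unit criterion $\mathcal{R}_r^\times=\{G\in\mathcal{R}_r\mid v_p(G(0))=0\}$ of Proposition~\ref{norm_and_reciprocal_prop}. Your check in the converse direction that $\{d_n\}\subset\mathcal{O}_{\mathbb{C}_p}$ (via Proposition~\ref{subring_prop}(2) with $\Theta\equiv0$) before invoking \eqref{R_r_rephrase_eq} is a detail the paper leaves implicit.
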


\begin{corollary}
\label{Reciplocal_la_cor}
    Let $r\in[0,1]$, $s\in\mathbb{Z}_{\ge0}$, and
    \[
    F(t)=\sum_{n=0}^{\infty}c_n\frac{t^n}{n!}
    \]
    be the generating function of a sequence $\{c_n\}_{n=0}^{\infty}\subset\mathcal{O}_{\mathbb{C}_p}$ with $v_p(c_0)=0$. Let $\{d_n\}_{n=0}^{\infty}$ be the sequence whose generating function is
    \[
    \frac{1}{F(t)}=\sum_{n=0}^{\infty}d_n\frac{t^n}{n!}.
    \]

    Suppose $p+s\ge3$. If there exists some $C\in(\frac{1}{p^s(p-1)},1]$ such that 
    \[F(t)\in\bigoplus_{\alpha^q=\alpha}e^{\alpha t}\Bigl(\mathbb{C}_p\langle t|\ge\Theta_r\rangle\cap\mathbb{C}_p\langle t|\ge Cn\rangle\Bigr),\]
    then $\{d_n\}$ satisfies
    \[d_{n+(p^h-1)p^k}\equiv d_n\mod{p^{k+r}}\quad\text{for all }k\in\mathbb{Z}_{\ge0}\text{ and }n\ge p^k,\]
    and moreover, for each $i=0,1,\cdots,p^h-2$, the map
    \[\mathbb{Z}_{\ge0}\rightarrow\mathcal{O}_{\mathbb{C}_p};n\mapsto \lim_{k\rightarrow\infty}d_{i+(p^h-1)(n+p^k)}\]
    is continuously extended to a function on $\mathbb{Z}_p$ which is locally analytic of order $s$.
\end{corollary}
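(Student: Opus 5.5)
The plan is to run the argument of Corollary~\ref{Reciplocal_cor} with a smaller coefficient ring, applying Proposition~\ref{norm_and_reciprocal_prop} to
\[\mathcal{A}=\mathbb{C}_p\langle t|\ge\Theta_r\rangle\cap\mathbb{C}_p\langle t|\ge Cn\rangle,\qquad \mathcal{R}=\sum_{\alpha^q=\alpha}e^{\alpha t}\mathcal{A}.\]
Throughout, $\ell=p^h-1=q-1$.

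\emph{Step 1: the hypotheses of Subsection~\ref{Reciprocation_subsection} hold.} By Lemma~\ref{concrete_Theta_satisfying_condition_lemma}(2)(a),(b), both $\Theta_1=\Theta_r$ (with $r\in[0,1]$) and $\Theta_2(n)=Cn$ (with $0<C\le1$) satisfy (A), (B) and (C), and $\Theta_r$ satisfies (D).
\begin{itemize}
\item Strictly, Subsection~\ref{Reciprocation_subsection} uses (A) without the binomial term. I will take the binomial-term version of (A) from that lemma, since it is what Proposition~\ref{subring_prop} and Theorem~\ref{A,M',R_properties_thm} actually use.
\item Hence $F(t)\in\mathcal{R}$ by assumption, and $F(0)=c_0$ has $v_p(c_0)=0$.
\item Proposition~\ref{norm_and_reciprocal_prop} then gives $1/F(t)\in\mathcal{R}$.
\item By Theorem~\ref{A,M',R_properties_thm}(4), we can write $1/F(t)=\sum_{\alpha^q=\alpha}e^{\alpha t}G_\alpha(t)$ with $G_\alpha\in\mathcal{A}$. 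Here we use that $\{x\in\mathcal{O}_h\mid x^q=x\}$ itself serves as the set $A$.
\end{itemize}

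\emph{Step 2: the congruence.} Split $1/F=\sum_{\alpha^{\ell}=1}e^{\alpha t}G_\alpha+G_0$.
\begin{itemize}
\item $G_0\in\mathbb{C}_p\langle t|\ge\Theta_r\rangle$, so Theorem~\ref{application_thm} applies with $\Theta=\Theta_r$ and $F_i=G_\alpha$.
\item It gives $d_{n+\ell p^k}\equiv d_n \bmod p^{k+r}$ for $n\ge n_k$, where $n_k\le p^k$ as computed after Theorem~\ref{application_thm}.
\item Alternatively, this is just the inclusion $\mathcal{R}\subset\mathcal{R}_r$ combined with \eqref{R_r_rephrase_eq}.
\end{itemize}

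\emph{Step 3: local analyticity.} The hardest step is identifying the limit sequence with the $M'_r$-component of $\{d_n\}$ and checking smoothness for each piece.
\begin{itemize}
\item Write $\{d_n\}=\{c'_n\}+\{e_n\}$, where $\sum c'_n t^n/n!=\sum_{\alpha^\ell=1}e^{\alpha t}G_\alpha$ and $\sum e_n t^n/n!=G_0$. The first piece lies in $M'_r$ by \eqref{M_r^alpha(O_Cp)_gen_func_equ}, and the second in $N$ since $\Theta_r\to\infty$.
\item By Lemma~\ref{c'_n_and_d_n_lemma} and the uniqueness in Proposition~\ref{internal_direct_sum_prop}, $c'_n=\lim_k d_{n+\ell p^k}$. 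The map in the statement is then $n\mapsto c'_{i+\ell n}$: it equals $\lim_k d_{i+\ell n+\ell p^k}$, which is exactly $f_i$ in \eqref{f_i_def}.
\item By Remark~\ref{why_<ge>_only_remark}, since $C>1/(p^s(p-1))$, we have $G_\alpha\in\mathbb{C}_p\langle t|\ge Cn\rangle\subset\mathbb{C}_p\langle t|v_p(\lfloor n/p^s\rfloor!)\rangle$. Hence $e^{\alpha t}G_\alpha\in e^{\alpha t}(\mathbb{C}_p\langle t|\ge\Theta_r\rangle\cap\mathbb{C}_p\langle t|v_p(\lfloor n/p^s\rfloor!)\rangle)$.
\item Proposition~\ref{Basic_prop}(2) then makes each $f_\alpha$ locally analytic of order $s$. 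Proposition~\ref{smoothness_of_interpolation_prop} transfers this to all $f_i$.
\item Equivalently, $\sum c'_nt^n/n!$ lies in the right-hand side of \eqref{R_r,s^la(O_Cp)_decomposition}.
\end{itemize}

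The hypothesis $p+s\ge3$ enters only as in Corollary~\ref{change_of_basis_la_cor}, if one instead wants to re-center at representatives $a\equiv\alpha$. With the basis $\alpha^q=\alpha$ used above, no change of basis is needed.
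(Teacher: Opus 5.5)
Your proposal is correct and is essentially the paper's own argument: apply Proposition~\ref{norm_and_reciprocal_prop} to $\mathcal{A}=\mathbb{C}_p\langle t|\ge\Theta_r\rangle\cap\mathbb{C}_p\langle t|\ge Cn\rangle$ to get $1/F\in\mathcal{R}$, then read off the congruence from Theorem~\ref{application_thm} (equivalently \eqref{R_r_rephrase_eq}), and the local analyticity from $\mathbb{C}_p\langle t|\ge Cn\rangle\subset\mathbb{C}_p\langle t|v_p(\lfloor n/p^s\rfloor!)\rangle$ together with Propositions~\ref{Basic_prop}(2) and~\ref{smoothness_of_interpolation_prop}. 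You are right to use the binomial form of condition (A), and in fact $p+s\ge3$ is automatic here, since the interval $(\frac{1}{p^s(p-1)},1]$ is empty exactly when $p=2$ and $s=0$.
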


\section{Comparison with Method of $p$-adic Integration}
\label{Comparison_with_integral_section}
The purpose of this section is to explain the motivation behind the framework introduced in this paper. For this purpose, we discuss a connection between $p$-adic integration and congruences of sequences.

\subsection{Use of $p$-adic Integration and its Advantages}
Let $\{c_n\}_{n=0}^{\infty}$ be a sequence that we are now interested in. Consider the following assumption;
\begin{assumption}
\label{measure_assumption}
    We have a $p$-adic measure $\mu$ on $\mathbb{Z}_p$ taking values in $\mathcal{O}_{\mathbb{C}_p}$ such that
    \begin{equation}
    \label{c_n_x^n_int_measure_equ}
        c_n=\int_{\mathbb{Z}_p}x^nd\mu(x)
    \end{equation}
    for all $n\in\mathbb{Z}_{\ge0}$. 
\end{assumption}
\begin{remark}
(1) For any complete subring $R\subset\mathbb{C}_p$, we say a $p$-adic measure $\mu$ on $\mathbb{Z}_p$ is $R$-valued if
\[\int_{\mathbb{Z}_p}\binom{x}{n}d\mu(x)\in R\]
for all $n\in\mathbb{Z}_{\ge0}$. It is not difficult to see that this is equivalent to the condition that for any compact open subset $X\subset\mathbb{Z}_p$,
\[\mu(X)=\int_Xd\mu=\int_{\mathbb{Z}_p}\mathds{1}_{X}d\mu\in R\]
where $\mathds{1}_{X}$ is a characteristic function of $X$.

(2) Since the $p$-adic measure $\mu$ is determined by the values of
\[\int_{\mathbb{Z}_p}\binom{x}{n}d\mu(x),\]
if a measure satisfying \eqref{c_n_x^n_int_measure_equ} exists, it is unique.

(3) \eqref{c_n_x^n_int_measure_equ} forces $\{c_n\}\subset\mathcal{O}_{\mathbb{C}_p}$.
\end{remark}

Considering measures that satisfy \eqref{c_n_x^n_int_measure_equ} has, for example, the following advantages.

\begin{enumerate}
    \item[(i)] In Proposition~\ref{Assump_leads_la_prop} below, we show that under the Assumption~\ref{measure_assumption}, the problem of whether $\{c_n\}$ satisfies certain congruences can be reduced to determining the support of $\mu$. To investigate or control the support of $\mu$, we can use a well known one to one correspondence (Amice transform):
    \begin{align*}
    \{R\text{-valued $p$-adic measures on }\mathbb{Z}_p\}&\leftrightarrow\{A(T)=\sum_{n=0}^{\infty}b_nT^n\in R\llbracket T\rrbracket \mid \inf_{n\in\mathbb{Z}_{\ge0}}v_p(b_n)>-\infty\}\\
    \mu&\mapsto A_{\mu}(T)\coloneqq\sum_{n=0}^{\infty}\left(\int_{\mathbb{Z}_p}\binom{x}{n}d\mu(x)\right)T^n,
    \end{align*}
    where $R$ is a complete subring of $\mathbb{C}_p$. (Since $p$-adic measure $\mu$ is a bounded linear functional of $\mathscr{C}(\mathbb{Z}_p,\mathbb{C}_p)$, the valuations of the coefficients of $A_\mu$ are bounded below.) This correspondence enables us to use actions on the set of power series. For example, an action $\Phi$ defined as follows on a power series corresponds to the restriction of the support of the corresponding measure to $\mathbb{Z}_p^\times$:
    \[\Phi(A(T))=A(T)-\frac{1}{p}\sum_{\zeta^p=1}A((1+T)\zeta-1).\]
    That is, for a $p$-adic measure $\mu$, let $\nu$ be the restriction of $\mu$ to $\mathbb{Z}_p^\times$, defined by
    \[\int_{\mathbb{Z}_p}fd\nu=\int_{\mathbb{Z}_p^\times}fd\mu=\int_{\mathbb{Z}_p}f(x)\mathds{1}_{\mathbb{Z}_p^\times}(x)d\mu(x).\]
    Then, we have $\Phi(A_\mu(T))=A_\nu(T)$.
    
    \item[(ii)] The existence of a measure satisfying Equation \eqref{c_n_x^n_int_measure_equ} justifies a kind of ``umbral calculus". For example, we have
    \begin{equation*}
    \label{umbral_calculus_equ}
        \sum_{k=0}^{n}\binom{n}{k}(-1)^{n-k}c_{n+k(p-1)}=\int_{\mathbb{Z}_p}x^n(x^{p-1}-1)^nd\mu(x),
    \end{equation*}
    and since $\inf_{x\in\mathbb{Z}_p}v_p(x^n(x^{p-1}-1)^n)\ge n$, we know that the valuation of the left-hand side of \eqref{umbral_calculus_equ} is also equal to or greater than $n$.
    
\end{enumerate}

In fact, in \cite{Young}, Young exploited these advantages to prove his main result concerning the congruences of sequences (Theorem 1.1 and Theorem 1.2).

\subsection{An Interpretation of $p$-adic Integration Method in our Framework}
\label{Interpretation_of_integral_subsection}
A limitation of the argument described in the last subsection is that Assumption 4.1 does not hold for some $\{c_n\}$. Worse still, Assumption~\ref{measure_assumption} may fail even when $\{c_n\}$ satisfies 
\[c_{n+(p-1)p^k}\equiv c_n\mod{p^{k+1}}\]
for all $k,n\in\mathbb{Z}_{\ge0}$. 

In this subsection, we clarify what the cases where Assumption~\ref{measure_assumption} holds correspond to within our framework. This reveals how the results presented in Sections~3 and~4 extend the methods of $p$-adic integration. More precisely, we prove the following in this section.

\begin{proposition}
\label{Assump_leads_la_prop}
    Let 
    \[F(t)=\sum_{n=0}^{\infty}c_n\frac{t^n}{n!}\]
    be the generating function of a sequence $\{c_n\}_{n=0}^{\infty}\subset\mathcal{O}_{\mathbb{C}_p}$.
    
    (1) If Assumption~\ref{measure_assumption} holds, then
    \[F(t)\in\sum_{\alpha^{p}=\alpha}e^{\alpha t}\mathbb{C}_p\langle t|\ge n\rangle.\]

    Moreover, if $p$-adic measure $\mu$ satisfies \eqref{c_n_x^n_int_measure_equ},
    
    (2) the support of $\mu$ is contained in $\mathbb{Z}_p^\times$ if and only if
    \[F(t)\in\sum_{\alpha^{p-1}=1}e^{\alpha t}\mathbb{C}_p\langle t|\ge n\rangle.\]
    
    (3) for each $(p-1)$-th root of unity $\alpha$, the support of $\mu$ is contained in $\alpha+p\mathbb{Z}_p^\times$ if and only if
    \[F(t)\in e^{\alpha t}\mathbb{C}_p\langle t|\ge n\rangle.\]
\end{proposition}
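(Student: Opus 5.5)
The plan is to write the generating function as a moment integral, $F(t)=\int_{\mathbb{Z}_p}e^{xt}\,d\mu(x)$, interpreted coefficientwise. Then I split $\mathbb{Z}_p$ into the $p$ residue discs $\alpha+p\mathbb{Z}_p$, where $\alpha$ runs over the Teichmüller representatives $\{\alpha\mid\alpha^p=\alpha\}$ (this includes $\alpha=0$). On each disc I factor $e^{xt}=e^{\alpha t}e^{(x-\alpha)t}$.

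For (1), set $\mu_\alpha=\mu|_{\alpha+p\mathbb{Z}_p}$ and
\[G_\alpha(t)=\sum_{n\ge0}\left(\int_{\alpha+p\mathbb{Z}_p}(x-\alpha)^n\,d\mu(x)\right)\frac{t^n}{n!}.\]
Since $\mu$ is $\mathcal{O}_{\mathbb{C}_p}$-valued, $v_p\bigl(\int f\,d\mu\bigr)\ge v_{\mathscr{C}}(f)$. The function $(x-\alpha)^n\mathds{1}_{\alpha+p\mathbb{Z}_p}$ has sup-valuation at least $n$, so $G_\alpha\in\mathbb{C}_p\langle t|\ge n\rangle$.

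Next I expand by the binomial theorem. The coefficient of $t^n/n!$ in $e^{\alpha t}G_\alpha(t)$ is
\[\sum_m\binom{n}{m}\alpha^{n-m}\int(x-\alpha)^m\,d\mu_\alpha=\int x^n\,d\mu_\alpha.\]
Summing over $\alpha$ gives $F=\sum_{\alpha^p=\alpha}e^{\alpha t}G_\alpha$, which proves (1). The same computation proves the ``only if'' directions of (2) and (3): if the support of $\mu$ avoids some discs, the corresponding $G_\alpha$ vanish. Here I read the disc in (3) as $\alpha+p\mathbb{Z}_p$, i.e.\ the residue class of $\alpha$.

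For the converses I will use uniqueness of the decomposition. I take $h=1$, $\ell=p-1$, $q=p$, and $\mathcal{A}=\mathbb{C}_p\langle t|\ge n\rangle$, which is $\Theta(n)=n$, case (b) of Lemma~\ref{concrete_Theta_satisfying_condition_lemma} with $C=1$. Since $n\ge\lfloor\log_pn\rfloor+1$ for $n\ge1$, I have $\mathcal{A}=\mathcal{A}\cap\mathbb{C}_p\langle t|\ge\Theta_1\rangle$, so Theorem~\ref{A,M',R_properties_thm}(4) applies. It gives that the sum $\sum_{\alpha^p=\alpha}e^{\alpha t}\mathcal{A}$ is direct.

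Suppose $F=\sum_{\alpha^{p-1}=1}e^{\alpha t}H_\alpha$ with $H_\alpha\in\mathcal{A}$. Comparing with the decomposition from (1) forces $G_0=0$, and more generally $G_\beta=0$ for every $\beta$ excluded by the hypothesis. That is, $\int_{\beta+p\mathbb{Z}_p}(x-\beta)^n\,d\mu=0$ for all $n$, and hence $\int_{\beta+p\mathbb{Z}_p}x^n\,d\mu=0$ for all $n$.

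The remaining step, which I expect to be the main technical point, is to conclude from this that $\mu_\beta=0$. Polynomials restricted to $\beta+p\mathbb{Z}_p$ are dense in $\mathscr{C}(\beta+p\mathbb{Z}_p,\mathbb{C}_p)$: pull back along $y\mapsto\beta+py$ and use Mahler's Theorem~\ref{Mahler_expansion_thm}, since binomial polynomials in $y=(x-\beta)/p$ are polynomials in $x$. A measure is a bounded functional, so vanishing on this dense subspace forces $\mu_\beta=0$. Hence the support of $\mu$ lies in the union of the allowed discs. For (2) the allowed discs are those with $\alpha\ne0$, whose union is $\mathbb{Z}_p^\times$. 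For (3) the only allowed disc is $\alpha+p\mathbb{Z}_p$.
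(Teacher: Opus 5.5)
Your proof is correct, but it takes a different route from the paper's, mainly in the converse directions. For (1), the paper changes variables $x=\alpha+py$ and writes each disc's contribution as $e^{\alpha t}A_{\nu_\alpha}(e^{pt}-1)$ via the Amice transform. It then expands $A(e^{pt}-1)$ to show that its coefficients have valuation $\ge n$. Your $G_\alpha$ is the same series (by Lemma~\ref{measure_lem}), but you get the bound in one line from $v_p\bigl(\int f\,d\mu\bigr)\ge v_{\mathscr{C}}(f)$.

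For the converses of (2) and (3), the paper first turns the hypothesis into the congruences $c_{n+(p-1)p^k}\equiv c_n$ (resp.\ $c_{n+p^k}\equiv\alpha c_n$) modulo $p^{k+1}$ via Proposition~\ref{Basic_prop}. It integrates these against $\mathbb{Z}_p[x]$-polynomial approximations of a locally constant function (Lemma~\ref{poly_approxi_lem}). The point of that lemma is the estimate $v_p(a_n)\ge v_p(n!)$, which keeps the approximants integral, since only $\mathbb{Z}_p$-combinations of the congruences stay in $p^{k+1}\mathcal{O}_{\mathbb{C}_p}$. A double limit in $m$ and $k$ then gives $\int x^n\mathds{1}_{p\mathbb{Z}_p}\,d\mu=0$.

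You instead use the directness of $\sum_{\alpha^p=\alpha}e^{\alpha t}\mathbb{C}_p\langle t|\ge n\rangle$ (Theorem~\ref{A,M',R_properties_thm}(4), which rests on Proposition~\ref{internal_direct_sum_prop}). This gives the exact vanishing $G_\beta=0$ at once. After that, Mahler density with arbitrary $\mathbb{C}_p$-coefficients is enough, so you need neither integral approximants nor a limit in $k$.

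What each route buys:
\begin{itemize}
\item The paper's version shows, entirely on the measure side, how the congruences themselves force the support condition. That is the viewpoint this section is illustrating.
\item Yours is shorter and lets the uniqueness of the decomposition do the real work.
\end{itemize}

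Your reading of $\alpha+p\mathbb{Z}_p^\times$ as $\alpha+p\mathbb{Z}_p$ is also correct. It is what the paper's proof actually establishes, and the literal statement fails for the Dirac measure at $\alpha$.
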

\begin{remark}
Recall that
\[\mathbb{C}_p\langle t|\ge n\rangle\subset\mathbb{C}_p\langle t|\ge\Theta_1\rangle\]
We already have \eqref{R_r(O_C_p)_decomposition} and Theorem~\ref{application_thm}. So, if Assumption~\ref{measure_assumption} holds, $\{c_n\}$ automatically satisfies the congruence
\[c_{n+(p-1)p^k}\equiv c_n\mod{p^{k+1}}\]
for all $k\in\mathbb{Z}_{\ge0}$ and $n\ge k+1$, and moreover, if the support of $\mu$ is contained in $\mathbb{Z}_p^\times$ (resp. $\alpha+p\mathbb{Z}_p$), $\{c_n\}$ satisfies
\begin{align*}
    c_{n+(p-1)p^k}&\equiv c_n\mod{p^{k+1}}\quad\text{for all }k,n\in\mathbb{Z}_{\ge0}\\
    (\text{resp. }c_{n+p^k}&\equiv \alpha c_n\mod{p^{k+1}}\quad\text{for all }k,n\in\mathbb{Z}_{\ge0})
\end{align*}
by Proposition~\ref{Basic_prop}. Furthermore, if $p+s\ge3$, since
\[\mathbb{C}_p\langle t|\ge n\rangle\subset\mathbb{C}_p\langle t|v_p(\lfloor n/p^s\rfloor!)\rangle,\]
the $p$-adic interpolation of the map
\[\mathbb{Z}_{\ge0}\rightarrow\mathbb{C}_p;n\rightarrow\lim_{k\rightarrow\infty}c_{i+(p-1)(n+p^k)}\]
is locally analytic of order $0$, i.e., globally analytic on $\mathbb{Z}_p$ for each $i=0,1,\cdots,p-2$ by \eqref{R_r,s^la(O_Cp)_decomposition}.

Therefore, for example, if we set
\[c_n=\begin{cases}
    p & (\text{if }n\equiv0\mod{p}),\\
    0 & (\text{if }n\not\equiv0\mod{p})
\end{cases}\]
with odd prime $p$, it satisfies
\[c_{n+(p-1)p^k}\equiv c_n\mod{p^{k+1}}\]
for all $k,n\in\mathbb{Z}_{\ge0}$, but Assumption~\ref{measure_assumption} does not hold because the map
\[\mathbb{Z}_p\rightarrow\mathbb{C}_p;n\mapsto c_{(p-1)n}\]
is locally analytic of order $1$ but not of order $0$.
\end{remark}

To prove Proposition~\ref{Assump_leads_la_prop}, we prepare two facts.
\begin{lemma}
\label{measure_lem}
    Let $\mu$ be a $p$-adic measure.
    For $t\in\mathbb{Z}_p$,
    \[A_\mu(e^{pt}-1)=\int_{\mathbb{Z}_p}e^{ptx}d\mu(x).\]
    In particular, as a formal series
    \[A_\mu(e^t-1)=\sum_{n=0}^{\infty}\left(\int_{\mathbb{Z}_p}x^nd\mu(x)\right)\frac{t^n}{n!}.\]
\end{lemma}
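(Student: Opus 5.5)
The plan is to expand both sides in the Mahler basis and interchange the integral with the sum, justifying the interchange by uniform convergence on $\mathbb{Z}_p$.

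For fixed $t\in\mathbb{Z}_p$, set $g_t(x)=e^{ptx}$. Since $v_p(pt)\ge1>1/(p-1)$ when $p$ is odd (for $p=2$ one needs $v_p(pt)\ge 2$, so I would either restrict to $t\in 2\mathbb{Z}_2$ or read the statement with $e^{pt}$ replaced by an argument in the disc of convergence; I will note this caveat), the element $u=e^{pt}-1$ satisfies $v_p(u)=v_p(pt)\ge1$. Then $e^{ptx}=(1+u)^x=\sum_{n\ge0}\binom{x}{n}u^n$ for $x\in\mathbb{Z}_{\ge0}$ by the binomial theorem. Both sides are continuous in $x$: the left because $x\mapsto e^{ptx}$ is a convergent power series on $\mathbb{Z}_p$, the right because $v_p(u^n)\to\infty$ and $|\binom{x}{n}|_p\le1$. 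So the identity extends to all $x\in\mathbb{Z}_p$ by density. This is precisely the Mahler expansion of $g_t$, with $a_n(g_t)=u^n$, which tends to $0$ as required by Theorem~\ref{Mahler_expansion_thm}.

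Since $\mu$ is a bounded linear functional on $\mathscr{C}(\mathbb{Z}_p,\mathbb{C}_p)$ and the partial sums converge to $g_t$ in $v_{\mathscr{C}}$ (by Theorem~\ref{Mahler_expansion_thm}(iii), the tail has norm $\inf_{n>N}v_p(u^n)\to\infty$), we may integrate term by term:
\[\int_{\mathbb{Z}_p}e^{ptx}d\mu(x)=\sum_{n\ge0}u^n\int_{\mathbb{Z}_p}\binom{x}{n}d\mu(x)=A_\mu(u).\]
The right-hand sum converges because the coefficients of $A_\mu$ are bounded below and $v_p(u)\ge1$.

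For the formal statement, note that in $\mathbb{C}_p\llbracket t\rrbracket$ we have $(e^t-1)^n=n!\sum_{m}S(m,n)t^m/m!$. Hence the coefficient of $t^m/m!$ in $A_\mu(e^t-1)$ is $\sum_{n\le m}S(m,n)\,n!\int\binom{x}{n}d\mu=\int\sum_n S(m,n)\,x(x-1)\cdots(x-n+1)\,d\mu=\int x^m d\mu$. This uses the classical identity $x^m=\sum_n S(m,n)x^{\underline n}$ together with linearity of the integral over a finite sum, so no convergence issue arises.

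Alternatively, one could compare coefficients in $t$ from the first identity, since both sides are analytic in $t$ on a disc. The only delicate point is the convergence/justification in the first part, in particular the radius issue for $p=2$. I expect that to be the main obstacle, and I would resolve it by restricting $t$ to the appropriate disc.
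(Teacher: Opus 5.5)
Your proof is correct. For the first identity you follow the paper: you recognize $\sum_n\binom{x}{n}(e^{pt}-1)^n$ as $e^{ptx}$ and integrate term by term using uniform convergence in $x$. You are slightly more careful here, checking the identity on $\mathbb{Z}_{\ge0}$ and extending by density, where the paper simply invokes the generalized binomial theorem.

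The formal identity is where you take a different route. The paper deduces it from the analytic one, as the phrase ``in particular'' suggests. It expands $e^{ptx}=\sum_n x^n(pt)^n/n!$ uniformly in $x$ and integrates to get $A_\mu(e^{pt}-1)=\sum_n c_n(pt)^n/n!$ for $t\in\mathbb{Z}_p$. It then ``replaces $pt$ with $t$'', which tacitly uses a rearrangement of the double series defining $A_\mu(e^{pt}-1)$ and the identity principle for power series.

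Your Stirling-number computation is purely algebraic and involves no convergence. It uses only linearity of $\mu$ on polynomials, not even boundedness, so it holds for every $p$.

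That matters because your caveat is genuine. For $p=2$ and odd $t$, $e^{2t}$ does not converge and $\sum_n(2t)^n/n!$ diverges, so the paper's argument as written covers only odd $p$. It can be repaired by restricting to $t\in2\mathbb{Z}_2$, which still suffices for the identity principle.

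The formal identity is applied with $p=2$ to the Genocchi numbers in Proposition~\ref{Genocchi_no_measure_prop}. Your route is therefore the cleaner justification of exactly the case the paper needs. The paper's route has only the economy of deriving the second statement from the first.
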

The latter part of the statement will be used in Section~\ref{no_measure_section}.

\begin{proof}[proof pf Lemma~\ref{measure_lem}]
    By definition, 
    \[A_\mu(e^{pt}-1)=\sum_{n=0}^{\infty}\left(\int_{\mathbb{Z}_p}\binom{x}{n}d\mu(x)\right)(e^{pt}-1)^n.\]
    Since the valuations
    \[v_p\left(\int_{\mathbb{Z}_p}\binom{x}{n}d\mu(x)\right)\]
    are bounded below and $e^{pt}-1\in p\mathbb{Z}_p$, this infinite sum converges.

    For all $x\in\mathbb{Z}_p$ and $n\in\mathbb{Z}_{\ge0}$, $v_p(\binom{x}{n})\ge0$. Thus, for fixed $t\in\mathbb{Z}_p$, the infinite sum
    \[\sum_{n=0}^{\infty}\binom{x}{n}(e^{pt}-1)^n\]
    uniformly converges with respect to $x\in\mathbb{Z}_p$. Hence, this is a continuous function on $\mathbb{Z}_p$, and we have
    \[\int_{\mathbb{Z}_p}\sum_{n=0}^{\infty}\binom{x}{n}(e^{pt}-1)^nd\mu(x)=\sum_{n=0}^{\infty}\left(\int_{\mathbb{Z}_p}\binom{x}{n}d\mu(x)\right)(e^{pt}-1)^n.\]

    On the other hand, by the generalized binomial theorem, 
    \[\sum_{n=0}^{\infty}\binom{x}{n}(e^{pt}-1)^n=(1+e^{pt}-1)^{x}=e^{ptx}.\]
    Therefore, we have
    \[A_\mu(e^{pt}-1)=\int_{\mathbb{Z}_p}e^{ptx}d\mu(x).\]

    We have
    \[e^{ptx}=\sum_{n=0}^{\infty}x^n\frac{(pt)^n}{n!}\]
    and the infinite sum on the right-hand side uniformly converges with respect to $x\in\mathbb{Z}_p$. Thus, 
    \[A_\mu(e^{pt}-1)=\sum_{n=0}^{\infty}\left(\int_{\mathbb{Z}_p}x^nd\mu(x)\right)\frac{(pt)^n}{n!},\]
    and replacing $pt$ with $t$, we obtain the equation as a formal power series.
\end{proof}

\begin{lemma}
\label{poly_approxi_lem}
    Let $c_0,\cdots,c_{p-1}\in\mathbb{Z}_p$ and set 
    \[f:\mathbb{Z}_p\rightarrow\mathbb{Z}_p;x\mapsto c_i\quad(\text{if }x\in i+p\mathbb{Z}_p).\]
    Then, there exists a family of polynomials $\{f_m(x)\}\in\mathbb{Z}_p[x]$ that uniformly converges to $f(x)$ on $\mathbb{Z}_p$.
\end{lemma}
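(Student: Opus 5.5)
The plan is to write $f$ as a finite $\mathbb{Z}_p$-linear combination of the indicator functions $\mathds{1}_{i+p\mathbb{Z}_p}$ for $i=0,\dots,p-1$. It then suffices to approximate each indicator uniformly by polynomials in $\mathbb{Z}_p[x]$, since
\[f=\sum_{i=0}^{p-1}c_i\mathds{1}_{i+p\mathbb{Z}_p}\]
and uniform limits are compatible with finite $\mathbb{Z}_p$-linear combinations.

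For the indicators, I will use the Teichmüller-type iteration $x\mapsto x^{p}$. Let $\omega(x)=\lim_{m\to\infty}x^{p^m}$ for $x\in\mathbb{Z}_p$. For a unit $x$ we have $x^{p-1}\equiv1\pmod p$, and by induction $x^{(p-1)p^m}\equiv1\pmod{p^{m+1}}$. For $x\in p\mathbb{Z}_p$ we have $v_p(x^{(p-1)p^m})\ge(p-1)p^m\ge m+1$. Hence the polynomial $g_m(x)=x^{(p-1)p^m}$ satisfies
\[v_p\bigl(g_m(x)-\mathds{1}_{\mathbb{Z}_p^\times}(x)\bigr)\ge m+1\quad\text{for all }x\in\mathbb{Z}_p,\]
so $g_m\to\mathds{1}_{\mathbb{Z}_p^\times}$ uniformly. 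Translating, $1-g_m(x-i)$ converges uniformly to $\mathds{1}_{i+p\mathbb{Z}_p}(x)$, because $x-i\in p\mathbb{Z}_p$ exactly when $x\in i+p\mathbb{Z}_p$, and these are polynomials in $\mathbb{Z}_p[x]$.

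Then set
\[f_m(x)=\sum_{i=0}^{p-1}c_i\bigl(1-(x-i)^{(p-1)p^m}\bigr)\in\mathbb{Z}_p[x].\]
The strong triangle inequality together with $c_i\in\mathbb{Z}_p$ gives $v_p(f_m(x)-f(x))\ge m+1$ uniformly in $x$, which proves the lemma.

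The only step needing care is the congruence $x^{(p-1)p^m}\equiv1\pmod{p^{m+1}}$ for units. This follows from the standard lifting: if $y\equiv1\pmod{p^{k}}$ with $k\ge1$, then $y^p\equiv1\pmod{p^{k+1}}$, as seen from the binomial expansion of $(1+p^k z)^p$. That argument also covers $p=2$, since $k\ge1$ gives $2k\ge k+1$.
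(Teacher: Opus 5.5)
Your proof is correct, and it takes a different route from the paper.

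The paper works through the Mahler expansion of $f$. It computes the coefficients with a $p$-th-root-of-unity filter,
$a_n=\sum_{i}\frac{c_i}{p}\sum_{k=0}^{p-1}\zeta_p^{-ik}(\zeta_p^k-1)^n$.
It then combines $v_p(\zeta_p^k-1)=\frac{1}{p-1}$ for $1\le k\le p-1$, the integrality $a_n\in\mathbb{Z}_p$, and Legendre's formula to get $v_p(a_n)\ge\lceil n/(p-1)\rceil-1\ge v_p(n!)$ for $n\ge1$. Hence each $a_n\binom{x}{n}$ lies in $\mathbb{Z}_p[x]$, and the Mahler partial sums serve as the approximating polynomials.

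You instead write down explicit approximants $1-(x-i)^{(p-1)p^m}$. The only input you need is the elementary lifting $y\equiv1\pmod{p^k}\Rightarrow y^p\equiv1\pmod{p^{k+1}}$ for $k\ge1$. What your route buys:
\begin{itemize}
\item It avoids Mahler's theorem and roots of unity entirely.
\item It gives an explicit uniform error bound $v_p(f_m-f)\ge m+1$.
\item It uses the same mechanism, $x^{(p-1)p^k}\to\mathds{1}_{\mathbb{Z}_p^\times}$ uniformly, that the paper itself exploits in the proof of Proposition~\ref{Assump_leads_la_prop}, so it fits naturally where the lemma is applied.
\end{itemize}

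What the paper's route buys in exchange is finer information. The \emph{canonical} approximants, the Mahler partial sums, are already integral polynomials. Moreover, the bound $v_p(a_n)\ge v_p(n!)$ on the Mahler coefficients is exactly the kind of coefficient-growth statement the paper's framework is built on.

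The limit $\omega(x)$ you introduce is never used and can be dropped.
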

\begin{proof}
    Let
    \[f(x)=\sum_{n=0}^{\infty}a_n\binom{x}{n}\]
    be the Mahler expansion of $f$. Then, we have $v_p(a_n)\rightarrow\infty$.
    Since $\binom{x}{n}\in\mathbb{Z}_p$ for all $x\in\mathbb{Z}_p$, this convergence is uniform on $\mathbb{Z}_p$.

    Since $n!\binom{x}{n}\in\mathbb{Z}_p[x]$, it is enough to show that $v_p(a_n)\ge v_p(n!)$.

    Let $\zeta_p$ be a primitive $p$-th root of unity. By Theorem~\ref{Mahler_expansion_thm}, 
    \begin{align*}
        a_n&=\sum_{m=0}^{n}\binom{n}{m}(-1)^{n-m}f(m)\\
        &=\sum_{i=0}^{p-1}c_i\sum_{m=0}^{n}\binom{n}{m}(-1)^{n-m}\frac{1}{p}\sum_{k=0}^{p-1}\zeta_p^{(m-i)k}\\
        &=\sum_{i=0}^{p-1}\frac{c_i}{p}\sum_{k=0}^{p-1}\zeta_p^{-ik}(\zeta_p^k-1)^n
    \end{align*}.
    Now, $v_p(\zeta_p^k-1)\ge\frac{n}{p-1}$ for all $n$, and $v_p(a_n)\in\mathbb{Z}_p$. Thus,
    \[v_p(a_n)\ge\lceil\frac{n}{p-1}\rceil-1.\]
    On the other hand, since $v_p(n!)\in\mathbb{Z}_{\ge0}$ and $v_p(n!)\le\frac{n-1}{p-1}$ for all $n\ge1$, we have
    \[v_p(n!)\le\lfloor\frac{n-1}{p-1}\rfloor\]
    for all $n\ge1$.
    Hence, we obtain $v_p(a_n)\ge v_p(n!)$ and $v_p(a_0)=v_p(c_0)\ge0=v_p(0!)$.
\end{proof}

\begin{proof}[proof of Proposition~\ref{Assump_leads_la_prop}]
    Take an $\mathcal{O}_{\mathbb{C}_p}$-valued measure $\mu$ such that 
    \[\int_{\mathbb{Z}_p}x^nd\mu(x)=c_n\]
    for all $n\in\mathbb{Z}_{\ge0}$. 
    
    We first show (1) and ``only if" part of (2) and (3).
    Let $S\subset\{\alpha\in\mathbb{Z}_p~|~\alpha^p=\alpha\}$ and assume the support of $\mu$ is contained in $\bigcup_{\alpha\in S}\alpha+p\mathbb{Z}_p$.
    
    We have
    \begin{align*}
        \sum_{n=0}^{\infty}c_n\frac{t^n}{n!}&=\sum_{n=0}^{\infty}\int_{\mathbb{Z}_p}x^n\frac{t^n}{n!}d\mu(x)\\
        &=\int_{\mathbb{Z}_p}e^{tx}d\mu(x)\\
        &=\sum_{\alpha\in S}\int_{\alpha+p\mathbb{Z}_p}e^{tx}d\mu(x)\\
        &=\sum_{\alpha\in S}\int_{\mathbb{Z}_p}e^{t(\alpha+py)}d\nu_\alpha(y)\\
        &=\sum_{\alpha\in S}e^{\alpha t}A_{\nu_\alpha}(e^{pt}-1)
    \end{align*}
    where $\nu_\alpha$ are the measures obtained from $\mu$ by the change of variables $x=\alpha+py$, and $A_\nu$ is the its corresponding power series by the Amice transform. 
    
    Therefore, the problem is reduced to showing the following:
    \[\text{For any }A(T)\in\mathcal{O}_{\mathbb{C}_p}\llbracket T\rrbracket,\quad A(e^{pt}-1)\in\mathbb{C}_p\langle t|\ge n\rangle.\]
    This is just a calculation. Let $A(T)=\sum_{n=0}^{\infty}a_nt^n/n!$. Then,
    \begin{align*}
        A(e^{pt}-1)&=\sum_{n=0}^{\infty}a_n\left(\sum_{k=1}^{\infty}\frac{(pt)^k}{k!}\right)^n\\
        &=a_0+\sum_{n=1}^{\infty}\sum_{m=1}^{\infty}a_n\sum_{\substack{k_1,\cdots,k_n\ge1\\k_1+\cdots+k_n=m}}\frac{(pt)^m}{k_1!\cdots k_n!}\\
        &=a_0+\sum_{m=1}^{\infty}\frac{p^m}{m!}\left(\sum_{n=1}^{m}a_n\sum_{\substack{k_1,\cdots,k_n\ge1\\k_1+\cdots+k_n=m}}\binom{m}{k_1}\binom{m-k_1}{k_2}\cdots\binom{m-k_1-\cdots-k_{n-1}}{k_n}\right)t^m\\
        &\in\mathbb{C}_p\langle t|\ge n\rangle.
    \end{align*}

    Next, we show ``if" part of (2) and (3).
    Assume 
    \[F(t)\in\sum_{\alpha^{p-1}=1}e^{\alpha t}\mathbb{C}_p\langle t|\ge n\rangle\quad(\text{resp. }F(t)\in e^{\alpha t}\mathbb{C}_p\langle t|\ge n\rangle).\]
    Then, by Proposition~\ref{Basic_prop}, congruence 
    \[c_{n+(p-1)p^k}\equiv c_n\mod{p^{k+1}}\quad(\text{resp. }c_{n+p^k}\equiv \alpha c_n\mod{p^{k+1}})\]
    holds for all $k,n\in\mathbb{Z}_{\ge0}$. Thus,
    \begin{equation}
    \label{int_in_p^k+1_O_Cp_equ}
        \int_{\mathbb{Z}_p}x^n(x^{(p-1)p^k}-1)d\mu(x)\in p^{k+1}\mathcal{O}_{\mathbb{C}_p}\quad\left(\text{resp. }\int_{\mathbb{Z}_p}x^n(x^{p^k}-\alpha)d\mu(x)\in p^{k+1}\mathcal{O}_{\mathbb{C}_p}\right).
    \end{equation}
    Note that
    \[
    \lim_{k\rightarrow\infty}(x^{(p-1)p^k}-1)=\begin{cases}
        0 & (\text{if }x\in\mathbb{Z}_p^\times), \\
        -1 & (\text{if }x\in p\mathbb{Z}_p),
    \end{cases}\qquad
    \left(\text{resp. }\lim_{k\rightarrow\infty}(x^{p^k}-\alpha)=\beta_x-\alpha\right)
    \]
    where $\beta$ is the unique element satisfying $\beta^p=\beta$ and $\beta\equiv x\mod{p}$.
    Let 
    \[
    f(x)=\begin{cases}
        0 & (\text{if }x\in\mathbb{Z}_p^\times), \\
        -1 & (\text{if }x\in p\mathbb{Z}_p),
    \end{cases}\quad\left(\text{resp. }
    g(x)=\begin{cases}
        0 & (\text{if }x\in\alpha+p\mathbb{Z}_p), \\
        \frac{1}{\beta-\alpha} & (\text{if }x\in\beta+p\mathbb{Z}_p\text{ with }\beta\ne\alpha,\beta^{p}=\beta).
    \end{cases}\right)
    \]
    Then, by Lemma~\ref{poly_approxi_lem} below, there exists a family of polynomials $\{f_m(x)\},\{g_m(x)\}\subset\mathbb{Z}_p[x]$ that uniformly converge to $f(x),g(x)$ respectively on $\mathbb{Z}_p$.

    By \eqref{int_in_p^k+1_O_Cp_equ}, we have
    \[\int_{\mathbb{Z}_p}x^nf_m(x)(x^{(p-1)p^k}-1)d\mu(x)\in p^{k+1}\mathcal{O}_{\mathbb{C}_p}\quad\left(\text{resp. }\int_{\mathbb{Z}_p}x^ng_m(x)(x^{p^k}-\alpha)d\mu(x)\in p^{k+1}\mathcal{O}_{\mathbb{C}_p}\right)\]
    for all $n,k,m\in\mathbb{Z}_{\ge0}$. Taking a limit of $m\rightarrow\infty$, we obtain
    \[\int_{\mathbb{Z}_p}x^nf(x)(x^{(p-1)p^k}-1)d\mu(x)\in p^{k+1}\mathcal{O}_{\mathbb{C}_p}\quad\left(\text{resp. }\int_{\mathbb{Z}_p}x^ng(x)(x^{p^k}-\alpha)d\mu(x)\in p^{k+1}\mathcal{O}_{\mathbb{C}_p}\right).\]
    Again taking a limit of $k\rightarrow\infty$,
    \[\int_{\mathbb{Z}_p}x^n\mathds{1}_{p\mathbb{Z}_p}(x)d\mu(x)=0\quad\left(\text{resp. }\int_{\mathbb{Z}_p}x^n\mathds{1}_{\mathbb{Z}_p\setminus(\alpha+p\mathbb{Z}_p)}(x)d\mu(x)=0\right).\]
    It follows that 
    \[\int_{p\mathbb{Z}_p}\binom{x}{n}d\mu(x)=0\quad\left(\text{resp. }\int_{\mathbb{Z}_p\setminus(\alpha+p\mathbb{Z}_p)}\binom{x}{n}d\mu(x)=0\right),\]
    and since $\{\binom{x}{n}\}$ is a Banach basis of $\mathscr{C}(\mathbb{Z}_p,\mathbb{C}_p)$, this means that the support of $\mu$ is contained in $\mathbb{Z}_p^\times$ (resp. $\alpha+p\mathbb{Z}_p$).
\end{proof}

\begin{remark}
    Note that even if the generating function $F(t)$ of a sequence $\{c_n\}$ satisfies 
    \[F(t)\in\sum_{\alpha^p=\alpha}e^{\alpha t}\mathbb{C}_p\langle t|\ge n\rangle,\]
    there does not necessarily exist a $p$-adic measure $\mu$ such that 
    \[c_n=\int_{\mathbb{Z}_p}x^nd\mu(x).\]
    For example, consider $F(t)=e^tt\in e^t\mathbb{C}_p\langle t|\ge n=v_p(n!)\rangle$. If such $\mu$ exists, we should have
    \[F(t)=A_\mu(e^t-1)\]
    where $A_\mu$ is the corresponding formal power series by the Amice transform. However, the valuation of the coefficients of
    \[A_\mu(T)=F(\log(1+T))=(1+T)\sum_{n=1}^{\infty}\frac{(-1)^{n-1}}{n}T^n\]
    is not bounded below, and this is a contradiction.
\end{remark}

If one allows the use of locally analytic distributions or some extension thereof, this limitation of the $p$-adic integration method may be addressed to some extent. That is, instead of restricting the class of functions that can be integrated, we relax the condition on the $p$-adic valuations of the corresponding $\{b_n\}$.

For example, even though $v_p(b_n)$ is not bounded below, 
\begin{equation}
\label{f_integral_def}
    \int_{\mathbb{Z}_p}fd\mu\coloneqq\sum_{n=0}^{\infty}a_nb_n
\end{equation}
converses for each continuous function with Mahler expansion
\[f(x)=\sum_{n=0}^{\infty}a_n\binom{x}{n}.\]
Definition \eqref{f_integral_def} is the only way to satisfy linearity, continuity, and 
\[\int_{\mathbb{Z}_p}\binom{x}{n}d\mu(x)=b_n.\]

However, to cover the worst case $v_p(b_n)=-\frac{n}{p-1}+O(\log_pn)$, even the characteristic function $\mathds{1}_{a+p\mathbb{Z}_p}$ fails to be an integrable function by Theorem~\ref{smooth_coeffi_thm}, which makes it difficult to extend the arguments based on the supports of measures.

\section{Applications}
\label{Application_section}
In this section, we apply the above results to sequences arising in combinatorics and number theory.

\subsection{Genocchi Numbers}
\label{Genocchi_subsection}
In this subsection, using the Genocchi numbers as an example, we present two aspects of our framework in terms of its applications:

\begin{enumerate}[label=(\roman*)]
    \item Our framework can treat cases in which $p$-adic integration is not effective. (See Section~\ref{Comparison_with_integral_section} and Proposition~\ref{Genocchi_no_measure_prop} in Section~\ref{no_measure_section}.)

    \item Our framework can also be applied to proving that a congruence does not hold without computing the values of the sequence explicitly.
\end{enumerate}

Genocchi numbers $\{G_n\}_{n=0}^{\infty}$ are defined by 
\[\frac{2t}{e^t+1}=\sum_{n=0}^{\infty}G_n\frac{t^n}{n!}.\]

For $p$-adic congruence of Genocchi numbers, M. Cenkci showed the following congruence in \cite{Cenkci} by a method using $p$-adic integration presented in \cite{Young}.
\begin{theorem}[A part of Theorem 3.1 in \cite{Cenkci}]
\label{Cenkci's_thm}
    Let $p$ be an odd prime.
    If $c\equiv0\mod{(p-1)p^r}$ with $r\ge0$ and $p-1$ does not divide $m$, then,
    \[\Delta_c^k\left\{(1-p^{m-1})\frac{G_m}{m}\right\}\equiv0\mod{p^{k(r+1)}}.\]

    Where $\Delta_c$ is a difference operator such that $\Delta_c(c_n)=c_{n+c}-c_n$
\end{theorem}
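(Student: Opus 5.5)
The plan is to follow the $p$-adic integration route of Section~\ref{Comparison_with_integral_section}, advantage (ii). We realize $(1-p^{m-1})G_m/m$ as the moment $\int_{\mathbb{Z}_p^\times}x^{m-1}d\mu(x)$ of a $\mathbb{Z}_p$-valued measure supported on $\mathbb{Z}_p^\times$. The iterated difference then becomes the integral of a function whose sup-valuation is at least $k(r+1)$.

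\textbf{Step 1 (the measure).} Since $p$ is odd, the rule $\mu_{-1}(a+p^N\mathbb{Z}_p)=(-1)^a$ for $0\le a<p^N$ defines a finitely additive, $\{\pm1\}$-valued distribution. Additivity uses $\sum_{j=0}^{p-1}(-1)^{a+jp^N}=(-1)^a$, which holds because $p$ is odd. Hence $\mu_{-1}$ is a $\mathbb{Z}_p$-valued measure. Its Amice transform is $\sum_n(-T)^n\cdot 2^{-n-1}\cdots$; more directly, compute $\int_{\mathbb{Z}_p}e^{tx}d\mu_{-1}(x)$ by taking the limit of Riemann sums, giving
\[\lim_N\sum_{a=0}^{p^N-1}(-1)^ae^{ta}=\frac{2}{e^t+1},\]
since $e^{tp^N}\to1$ for $t$ in a small disc. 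Comparing coefficients with $2t/(e^t+1)=\sum G_nt^n/n!$, using Lemma~\ref{measure_lem} to justify the formal identity, gives
\[\int_{\mathbb{Z}_p}x^{n}d\mu_{-1}(x)=\frac{G_{n+1}}{n+1}.\]

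\textbf{Step 2 (removing the Euler factor).} Compute the contribution of $p\mathbb{Z}_p$. The pushforward of $\mu_{-1}$ under $y\mapsto py$ agrees with the restriction of $\mu_{-1}$ to $p\mathbb{Z}_p$, because $\mu_{-1}(pb+p^{N+1}\mathbb{Z}_p)=(-1)^{pb}=(-1)^b=\mu_{-1}(b+p^N\mathbb{Z}_p)$, again since $p$ is odd. Hence
\[\int_{p\mathbb{Z}_p}x^{m-1}d\mu_{-1}=p^{m-1}\frac{G_m}{m},\qquad \int_{\mathbb{Z}_p^\times}x^{m-1}d\mu_{-1}(x)=(1-p^{m-1})\frac{G_m}{m}.\]
Let $\nu$ be the restriction of $\mu_{-1}$ to $\mathbb{Z}_p^\times$. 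It is still $\mathbb{Z}_p$-valued, and its moments are the sequence $\{(1-p^{m-1})G_m/m\}$ in the variable $m-1$.

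\textbf{Step 3 (the difference estimate).} By linearity of the integral,
\[\Delta_c^k\Bigl\{(1-p^{m-1})\frac{G_m}{m}\Bigr\}=\int_{\mathbb{Z}_p^\times}x^{m-1}(x^c-1)^k\,d\mu_{-1}(x).\]
For $x\in\mathbb{Z}_p^\times$ and $(p-1)p^r\mid c$, we have $x^c\equiv1\pmod{p^{r+1}}$, since $(\mathbb{Z}_p^\times)^{(p-1)p^r}\subset1+p^{r+1}\mathbb{Z}_p$. So the integrand has sup-valuation at least $k(r+1)$. Because a $\mathbb{Z}_p$-valued measure satisfies $v_p(\int f\,d\nu)\ge v_{\mathscr{C}}(f)$, the congruence follows.

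The hypothesis $p-1\nmid m$ plays no role in this argument; it only matters in Cenkci's fuller statement. I expect the main obstacle to be Step 1, namely identifying the moments rigorously: interchanging the limit of Riemann sums with the power series expansion in $t$. I would handle this by evaluating at $pt$ as in Lemma~\ref{measure_lem}, so that everything converges uniformly on $\mathbb{Z}_p$, and then reading off the identity as formal power series.
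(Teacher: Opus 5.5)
Your argument is correct. It is essentially the $p$-adic integration proof (Young's method, i.e.\ advantage (ii) in Section~\ref{Comparison_with_integral_section}) that the paper attributes to Cenkci; the paper itself gives no proof beyond the citation, and oddness of $p$ enters exactly where you use it. Your normalization $\int_{\mathbb{Z}_p}x^n\,d\mu_{-1}(x)=G_{n+1}/(n+1)$ is the right one: a measure with moments $G_n$, as the paper's paraphrase of Cenkci suggests, would have Amice transform $2\log(1+T)/(2+T)$, whose $T^{p^k}$-coefficient has valuation $-k$. Also, Step~1 follows in one line from $A_{\mu_{-1}}(T)=2/(2+T)=\sum_n(-T/2)^n$ (not $2^{-n-1}$) together with Lemma~\ref{measure_lem}, which avoids the Riemann-sum interchange you were worried about.
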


Cenkci restricts his discussion to the case $p\ge3$ to ensure that $2\in\mathbb{Z}_p^\times$. Indeed, Cenkci used the $p$-adic measure $\alpha$ such that
\begin{equation}
\label{Genocchi_measure_equ}
    G_n=\int_{\mathbb{Z}_p}x^nd\alpha(x)
\end{equation}
for the case of $p\ge3$, and the condition $2\in\mathbb{Z}_p^{\times}$ is used to take such $\mu$. (See Cenkci's proof of Theorem 3.1 in \cite{Cenkci}) We prove that if $p=2$, there is no $\mu$ satisfying \eqref{Genocchi_measure_equ} in section \ref{no_measure_section}.

We can treat the case of $p=2$ in our framework.
\begin{corollary}
\label{Genocchi_cor}
    (1) The congruence
    \[G_{n+2^k}\equiv G_n\mod{2^k}\]
    holds for all $k\ge0$ and $n\ge n_k$ where $n_k$ is the minimum positive integer such that $v_2(n_k!)\ge k$.

    (2) The following statement does not hold:
    \begin{equation}
    \label{false_congruence}
        \frac{G_{n+2^k}}{n+2^k}\equiv\frac{G_n}{n}\mod{2^k}\quad\text{for all }k\ge0\text{ and }n\ge2^k.
    \end{equation}

    Consequently, the following statement does not hold either.
    \[\Delta_{2^k}\left\{(1-2^{m-1})\frac{G_m}{m}\right\}\equiv0\mod{2^k}.\]
\end{corollary}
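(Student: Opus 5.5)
For part (1), I plan to apply Theorem~\ref{application_thm} with $p=2$, $\ell=1$ (so $h=1$, $q=2$), $r=0$, $m=1$, $a_1=1$ and $\Theta(n)=v_2(n!)$. Since $v_2(n!)$ is nondecreasing and unbounded, the $n_k$ produced by that theorem is exactly the least $n'$ with $v_2(n'!)\ge k$, which is the $n_k$ in the statement. So the task reduces to writing
\[\frac{2t}{e^t+1}=e^{t}F_1(t)+G(t),\qquad F_1\in\mathbb{C}_2\langle t|\ge\Theta_0\rangle,\quad G\in\mathbb{C}_2\langle t|\ge v_2(n!)\rangle.\]
The key identity is $e^{2t}-1=(e^t-1)(e^t+1)$. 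It gives
\[\frac{2t}{e^t+1}=(e^t-1)H(t),\qquad H(t)\coloneqq\frac{2t}{e^{2t}-1}.\]
I will then show that $H$ lies in the ring $\mathcal{A}\coloneqq\mathbb{C}_2\langle t|\ge\Theta_0\rangle\cap\mathbb{C}_2\langle t|\ge v_2(n!)\rangle$. Once that is done, I can take $F_1=H$ and $G=-H$.

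To show $H\in\mathcal{A}$, I will start from its reciprocal,
\[\frac{e^{2t}-1}{2t}=\sum_{n\ge0}\frac{2^n}{n+1}\frac{t^n}{n!}.\]
The $n$-th coefficient has valuation $n-v_2(n+1)$. Two inequalities are needed:
\begin{itemize}
\item $n-v_2(n+1)\ge v_2(n!)$, which is equivalent to $v_2((n+1)!)\le n$ and follows from Proposition~\ref{val_of_n!_prop};
\item $n-v_2(n+1)\ge\lfloor\log_2 n\rfloor$ for $n\ge1$, which is elementary since $v_2(n+1)\le\log_2(n+1)$.
\end{itemize}
Hence the reciprocal lies in $\mathcal{A}$, and its constant term is $1$. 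By Lemma~\ref{concrete_Theta_satisfying_condition_lemma}(2)(a),(c), both $\Theta_0$ and $v_2(n!)$ satisfy (A) and (B). Proposition~\ref{subring_prop}(2) then shows that each of the two spaces is a ring closed under inverting elements with unit constant term, so $H\in\mathcal{A}$. This verification of the two valuation inequalities is the only real work, and I expect it to be routine.

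For part (2), I will reduce the question to the generating function of $d_n\coloneqq G_{n+1}/(n+1)$. Since $G_0=0$, we have
\[\sum_n d_n\frac{t^n}{n!}=\frac{1}{t}\cdot\frac{2t}{e^t+1}=\frac{2}{e^t+1}.\]
The function $\frac{2t}{e^t+1}-t$ is even, so $G_n=0$ for all odd $n\ge3$. I will test \eqref{false_congruence} with $k=0$, $n=2\ge2^0$, which asks that $v_2\bigl(\tfrac{G_3}{3}-\tfrac{G_2}{2}\bigr)\ge0$. Here $G_3=0$, and $G_2/2=d_1$ is the $t$-coefficient of $2/(e^t+1)=2/(2+t+\cdots)$, namely $-1/2$. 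Its valuation is $-1<0$, so \eqref{false_congruence} fails.

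For the final claim, I will take $m=2$, $k=0$. The difference is
\[(1-2^{2})\frac{G_3}{3}-(1-2)\frac{G_2}{2}=0-\tfrac12,\]
which has valuation $-1$, so the congruence modulo $2^0$ already fails. The only step that needs care is reading off $d_1=-1/2$ directly from the series $2/(e^t+1)$ rather than computing Genocchi numbers.
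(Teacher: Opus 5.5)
Your part (1) is the paper's argument. It uses the same splitting $\frac{2t}{e^t+1}=e^{t}H(t)-H(t)$ with $H(t)=\frac{2t}{e^{2t}-1}$, fed into Theorem~\ref{application_thm} with $\ell=1$, $r=0$, $\Theta(n)=v_2(n!)$. You are more careful than the written proof. It only records $\frac{e^{2t}-1}{2t}\in\mathbb{C}_2\langle t|\ge0\rangle$, but inverting inside $\mathbb{C}_2\langle t|\ge v_2(n!)\rangle$ by Proposition~\ref{subring_prop}(2) needs exactly your inequality $v_2((n+1)!)\le n$. One small repair: the real bound $v_2(n+1)\le\log_2(n+1)$ does not by itself give $n-v_2(n+1)\ge\lfloor\log_2 n\rfloor$ at $n=2$ or $n=4$. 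Either use $v_2(n+1)\le\lfloor\log_2(n+1)\rfloor$, or combine your first inequality with $v_2(n!)\ge\lfloor\log_2 n\rfloor$. The latter is the inclusion $\mathbb{C}_2\langle t|\ge v_2(n!)\rangle\subset\mathbb{C}_2\langle t|\ge\Theta_0\rangle$ that the paper uses.

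Your part (2) is correct but takes a genuinely different route. The paper argues structurally:
the $k=1$ instance of \eqref{false_congruence} bounds $v_2(G_n/n)$ below, so $2^M\cdot\frac{2}{e^t+1}\in\mathcal{M}_{2,1,0}(\mathcal{O}_{\mathbb{C}_2})$ for some $M$. Since $\frac{2t}{e^{2t}-1}$ is a unit of $\mathbb{C}_2\langle t|\ge v_2(n!)\rangle$ and this ring acts on $\mathcal{M}_{2,1,0}(\mathcal{O}_{\mathbb{C}_2})$, the series $2^M\frac{e^t-1}{t}=\sum_n\frac{2^M}{n+1}\frac{t^n}{n!}$ would lie there too, which is impossible.

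You instead exhibit one explicit failure at $k=0$, $n=2$, where $\frac{G_3}{3}-\frac{G_2}{2}=\frac12$ is not $2$-integral. Under the paper's convention ($a\equiv b\bmod p^R$ iff $v_p(a-b)\ge R$) this disproves the literal statement. The case $k=1$, $n=2$ fails as well, since $\frac{G_4}{4}-\frac{G_2}{2}=\frac34$. Your $m=2$, $k=0$ computation settles the last clause directly.

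Your route is shorter and uses nothing from Sections~3--4, but it rests on a non-integrality at a small index. The paper's argument goes through using the congruence only for large $n$. It therefore shows the failure persists for arbitrary thresholds $n_k$, and even modulo $2^{k+r}$ for any fixed $r$. That robustness is what makes the final clause a genuine consequence of (2). For large $m$ the term $2^{m-1}G_m/m$ is negligible modulo $2^k$, but not for small $m$ (e.g.\ $m=2$, $k=1$). It also illustrates the paper's point that a congruence can be refuted without computing any values.
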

\begin{proof}
    (1) First, the generating function of Genocchi numbers can be rewritten as
    \[\frac{2t}{e^t+1}=\frac{2t(e^t-1)}{e^{2t}-1}=\frac{2t}{e^{2t}-1}e^t-\frac{2t}{e^{2t}-1}e^{0\cdot t}.\]
    We have
    \[\frac{e^{2t}-1}{2t}=\sum_{n=0}^{\infty}\frac{2^n}{n+1}\frac{t^n}{n!}\in\mathbb{C}_2\langle t|\ge 0\rangle\]
    and its constant term is $1$. Thus, we obtain $2t/(e^{2t}-1)\in\mathbb{C}_2\langle t|\ge v_2(n!)\rangle$ by Proposition~\ref{subring_prop} and Lemma~\ref{concrete_Theta_satisfying_condition_lemma}.

    By Theorem~\ref{application_thm} and the fact $\mathbb{C}_2\langle t|\ge v_2(n!)\rangle\subset\mathbb{C}_2\langle t|\ge\Theta_0\rangle$, 
    \[G_{n+2^k}\equiv G_n\mod{2^k}\]
    for all $n,k\in\mathbb{Z}_{\ge0}$ such that $v_2(n!)\ge k$.

    (2) We have
    \[\sum_{n=0}^{\infty}\frac{G_{n+1}}{n+1}\frac{t^n}{n!}=\frac{2}{e^t+1}=\frac{2t}{e^{2t}-1}\frac{e^t-1}{t}.\]

    Assume that the congruence \eqref{false_congruence} holds. 

    Then, 
    \[\frac{G_n}{n}\equiv\begin{cases}
        \frac{G_3}{3} & (\text{if }n\ge3\text{ and }n\text{ is odd}) \\
        \frac{G_2}{2} & (\text{if }n\ge2\text{ and }n\text{ is even}) 
    \end{cases}\quad\mod2\]
    Thus, the values $v_2(G_n/n)$ are bounded below. Hence, there are some $M\in\mathbb{R}$ such that 
    \[\frac{2}{e^t+1}\cdot 2^M\in\mathcal{M}_{2,1,0}(\mathcal{O}_{\mathbb{C}_2}).\]
    (For the notation $\mathcal{M}_{2,1,0}(\mathcal{O}_{\mathbb{C}_2})$, see \eqref{R_r(R)_def}.)
    
    As we see in (1), $2t/(e^{2t}-1)$ is a unit in the ring $\mathbb{C}_2\langle t|\ge v_2(n!)\rangle$, and thus it is also a unit in the ring $\mathcal{M}_{2,1,0}(\mathcal{O}_{\mathbb{C}_2})$ because $\mathbb{C}_2\langle t|\ge v_2(n!)\rangle\subset\mathbb{C}_2\langle t|\ge\Theta_0\rangle\subset\mathcal{M}_{2,1,0}(\mathcal{O}_{\mathbb{C}_2})$.

    Hence, we have
    \[\frac{e^t-1}{t}\cdot2^M=\frac{2}{e^t+1}\cdot2^M\cdot\frac{e^{2t}-1}{2t}\in\mathcal{M}_{2,1,0}(\mathcal{O}_{\mathbb{C}_2})\]
    
    However, 
    \[\frac{e^t-1}{t}\cdot 2^M=\sum_{n=0}^{\infty}\frac{2^M}{n+1}\frac{t^n}{n!}\notin\mathcal{M}_{2,1,0}(\mathcal{O}_{\mathbb{C}_2})\]
    because the values $v_2(2^M/(n+1))$ are not bounded below. This is a contradiction.
\end{proof}

\subsection{Generalized Euler Numbers}
The sequence of generalized Euler numbers $\{E_n^{(d)}\}$ is defined for each integer $d\ge2$ by
\[\left(\sum_{n=0}^{\infty}\frac{t^{dn}}{(dn)!}\right)^{-1}=\left(\sum_{k=0}^{d-1}\exp{(\zeta_d^kt)}\right)^{-1}=\sum_{n=0}^{\infty}E_n^{(d)}\frac{t^n}{n!}\]
where $\zeta_d$ is a $d$-th primitive root of unity. This was first introduced in \cite{Leeming_and_MacLeod} by D. J. Leeming and R. A. MacLeod.

Note that generalized Euler numbers are integers, so it does not matter whether we regard $\zeta_d$ as an element of $\mathbb{C}_p$ instead of $\mathbb{C}$.

For the combinatorial interpretation of generalized Euler numbers, see \cite{Sagan}.

Leeming and MacLeod investigated some congruences modulo a prime power in \cite{Leeming_and_MacLeod}, and subsequent studies on generalized Euler numbers, including that of Sagan \cite{Sagan}, have also provided results concerning congruences of them.

Now, we obtain other congruences and results regarding the smoothness of $p$-adic interpolation of generalized Euler numbers.
\begin{corollary}
\label{generalized_Euler_cor}
    (1) Let $p$ be a prime, and let $d\ge2$ be an integer that is coprime to $p$. Then, there exists a positive integer $h$ such that
    \[E_{n+\ell p^k}^{(d)}\equiv E_n^{(d)}\mod{p^{k+1}}\quad\text{for all }k\in\mathbb{Z}_{\ge0}\text{ and }n\ge p^k.\]
    Moreover, such $h$ can be taken as a divisor of $\varphi(d)$, where $\varphi$ is Euler's totient function.

    (2) Let $p$ be a prime. Then, 
    \begin{equation}
    \label{d=p_cong}
        (-1)^{n+p^k}E_{n+p^k}^{(p)}\equiv(-1)^nE_n^{(p)}\mod{p^{k}}
    \end{equation}
    holds for all $k,n\ge0$ and the map $f:\mathbb{Z}_{\ge0}\rightarrow\mathbb{Z}_p;n\mapsto (-1)^nE_n^{(p)}$ is continuously extended to a function on $\mathbb{Z}_p$, which is locally analytic, at least as strong as of order $1$.

    (3) Let $p$ be a prime, and $m$ be a positive integer that is coprime to $p$. Set $d=mp$. Then, there exists a positive integer $h$ dividing $\varphi(m)$ such that
    \[E_{n+\ell p^k}^{(d)}\equiv E_n^{(d)}\mod{p^{k}}\quad\text{for all }k\in\mathbb{Z}_{\ge0}\text{ and }n\ge n_k,\]
    where $n_k$ is the minimum positive integer such that $v_p(n_k!)\ge k$
    
    Moreover, for each $i=0,1,\cdots,\ell-1$, the map 
    \[\mathbb{Z}_{\ge0}\rightarrow\mathcal{O}_{\mathbb{C}_p};n\mapsto \lim_{k\rightarrow\infty}E_{i+\ell(n+p^k)}^{(d)}\]
    is continuously extended to a function on $\mathbb{Z}_p$ which is locally analytic, at least as strong as of order $1$.
\end{corollary}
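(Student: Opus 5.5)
The plan is to write each generating function $\bigl(\sum_{k=0}^{d-1}\exp(\zeta_d^kt)\bigr)^{-1}$ as an element of the rings built in Section~4, and then read off the congruences with Theorem~\ref{application_thm}, Corollary~\ref{Reciplocal_cor} and Corollary~\ref{Reciplocal_la_cor}. In the statement, $\ell$ is understood as $p^h-1$.

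\textbf{Part (1).} Let $h$ be the multiplicative order of $p$ modulo $d$, so $h\mid\varphi(d)$ and $d\mid p^h-1=\ell$. Then every $\zeta_d^k$ satisfies $\alpha^q=\alpha$ in $\mathcal{O}_h$. Hence
\[D(t)=\sum_{k}\exp(\zeta_d^kt)\in\sum_{\alpha^q=\alpha}e^{\alpha t}\mathbb{C}_p\langle t|\ge\Theta_1\rangle=\mathcal{R}_1,\]
since constants lie in $\mathbb{C}_p\langle t|\ge\Theta_1\rangle$. The constant term is $D(0)=d$, which is a $p$-adic unit. Proposition~\ref{norm_and_reciprocal_prop} then gives $1/D\in\mathcal{R}_1$. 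The description \eqref{R_r_rephrase_eq} of $\mathcal{R}_1$ (equivalently Corollary~\ref{Reciplocal_cor} with $r=1$) yields the congruence modulo $p^{k+1}$ for $n\ge p^k$.

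\textbf{Part (2).} Write $\zeta=\zeta_p$ and $\exp(\zeta^kt)=e^{t}\exp((\zeta^k-1)t)$. This gives $D(t)=e^tH(t)$ with
\[H(t)=\sum_k\exp((\zeta^k-1)t)=p\,K(t),\qquad K(t)=\sum_n a_n\frac{t^n}{n!},\qquad a_n=\sum_{j\equiv0\ (p)}\binom{n}{j}(-1)^{n-j}\in\mathbb{Z}.\]
Since $p a_n=\sum_k(\zeta^k-1)^n$, the same computation as in Lemma~\ref{poly_approxi_lem} gives $v_p(a_n)\ge\lceil n/(p-1)\rceil-1\ge v_p(n!)$. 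So $K\in\mathbb{C}_p\langle t|\ge v_p(n!)\rangle$ with $K(0)=1$. By Lemma~\ref{concrete_Theta_satisfying_condition_lemma}(2)(c) and Proposition~\ref{subring_prop}, $1/K(-t)$ lies in the same ring.

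Replacing $t$ by $-t$ shows that $(-1)^nE^{(p)}_n$ has generating function $e^{t}\cdot p^{-1}\cdot K(-t)^{-1}$. The claimed congruence modulo $p^{k}$ should then follow from Proposition~\ref{Basic_prop}(1) with $\alpha=1$. The needed input is a bound of the form $v_p(\text{coeff}_n(1/K))\ge\lfloor\log_pn\rfloor+1$ for $n\ge1$, which absorbs the factor $p^{-1}$. Local analyticity of order $1$ comes from Proposition~\ref{Basic_prop}(2), using
\[v_p(n!)-v_p(\lfloor n/p\rfloor!)\to\infty .\]

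\textbf{Part (3).} For $d=mp$, group the $d$-th roots of unity as $\beta\zeta_p^j$ with $\beta^m=1$. This gives
\[D(t)=\sum_{\beta^m=1}e^{\beta t}\,pK_\beta(t),\qquad K_\beta(t)=p^{-1}\sum_j\exp\bigl(\beta(\zeta_p^j-1)t\bigr).\]
Each $K_\beta$ lies in $\mathbb{C}_p\langle t|\ge v_p(n!)\rangle$ by the estimate of Part~(2). Take $h$ to be the order of $p$ modulo $m$, which divides $\varphi(m)$, so that $\beta^q=\beta$ for all $\beta^m=1$. Then $D/p$ lies in the ring $\mathcal{R}$ of Theorem~\ref{A,M',R_properties_thm} built on $\mathcal{A}=\mathbb{C}_p\langle t|\ge\Theta_0\rangle\cap\mathbb{C}_p\langle t|\ge v_p(n!)\rangle$. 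Its constant term is $m$, a unit, so Proposition~\ref{norm_and_reciprocal_prop} gives $p/D\in\mathcal{R}$.

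Using the direct-sum decomposition of Theorem~\ref{A,M',R_properties_thm}(4), write
\[1/D=\sum_a e^{at}F_a\]
with $F_a\in p^{-1}\mathcal{A}$. Theorem~\ref{application_thm}, applied with $\Theta=v_p(n!)$ after accounting for the factor $p^{-1}$, should give the congruence modulo $p^k$ for $n\ge n_k$. Corollary~\ref{change_of_basis_la_cor}, with $s=1$ and the inclusion $\mathbb{C}_p\langle t|\ge v_p(n!)\rangle\subset\mathbb{C}_p\langle t|v_p(\lfloor n/p\rfloor!)\rangle$, should give local analyticity of order $1$ for the limit sequences.

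\textbf{Main obstacle.} The hardest step, which I have not verified, is the bookkeeping of the factor $p^{-1}$ in Parts~(2) and~(3). The ring $\mathbb{C}_p\langle t|\ge v_p(n!)\rangle$ only provides $\Theta_0$-type control, which after dividing by $p$ loses one power of $p$. It also fails at $n=1$, where $a_1=-1$ is a unit. To recover exactly modulo $p^k$, I would first try to show that $1/K(-t)-1$ has all coefficients of valuation at least $\lfloor\log_pn\rfloor+1$. Concretely, I would check whether $K(-t)e^{-t}$ (or a suitable exponential shift $e^{ct}$ with $v_p(c)\ge1$) lies in $\mathbb{C}_p\langle t|\ge\Theta_1\rangle$. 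Failing that, I would adjust which sequence is claimed to be Lipschitz by absorbing the sign and the shift. I expect this valuation check for small $n$ to be where the real work lies.
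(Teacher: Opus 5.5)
Your Part~(1) is correct and is essentially the paper's argument. The paper gets $F_d\in\mathcal{R}_{p,h,1}$ from the $d$-periodicity of the coefficients, while you use the explicit exponential sum; the overall factor $d$ is a $p$-adic unit there, so it does no harm.

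In Parts~(2) and~(3), however, there is a genuine gap, and it is exactly the factor $p^{-1}$ you left unresolved. No sharper valuation estimate can remove it, because it comes from a mis-normalised generating function. The generalized Euler numbers are the integer coefficients of $1/F_d(t)$, where $F_d(t)=\sum_n t^{dn}/(dn)!=d^{-1}\sum_{k=0}^{d-1}\exp(\zeta_d^kt)$. The displayed identity $\bigl(\sum_n t^{dn}/(dn)!\bigr)^{-1}=\bigl(\sum_k\exp(\zeta_d^kt)\bigr)^{-1}$ in the definition is off by this factor $d$. The normalisation is nevertheless fixed by three things: the remark that these numbers are integers, the codomain $\mathbb{Z}_p$ in~(2), and the paper's proof, which works with $F_d$ throughout.

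Your $1/D(t)$ is $d^{-1}$ times the true generating function, and for it the asserted congruences are false. For $d=p$ you get $E^{(p)}_0=1/p$ and $E^{(p)}_1=0$, since the $t$-coefficient of $D$ is $\sum_k\zeta_p^k=0$. Then \eqref{d=p_cong} with $k=n=0$ would require $v_p(E^{(p)}_0+E^{(p)}_1)\ge 0$, which fails. For the same reason your proposed repair cannot work: $1/K(-t)=1-t+\cdots$, so its $t$-coefficient already violates the bound $\lfloor\log_pn\rfloor+1$.

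With the correct normalisation the obstacle disappears, and your plan becomes the paper's proof.

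In~(2), $F_p(t)=e^tK(t)$, where your $K$ is the paper's $G$. So $(-1)^nE^{(p)}_n$ has generating function $e^{t}K(-t)^{-1}$. Here $K(-t)^{-1}\in\mathbb{C}_p\langle t|\ge v_p(n!)\rangle\subset\mathbb{C}_p\langle t|\ge\lfloor\log_pn\rfloor\rangle$, because $v_p(n!)\ge\lfloor\log_pn\rfloor$ for $n\ge1$. Proposition~\ref{Basic_prop} with $\alpha=1$, $r=0$ then gives the congruence modulo $p^k$. Your substitution $t\mapsto -t$ handles $p=2$ uniformly, whereas the paper treats it separately. Your analyticity argument, via $v_p(n!)-v_p(\lfloor n/p\rfloor!)=\lfloor n/p\rfloor\to\infty$, is correct.

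In~(3), $1/F_d=m\,(D/p)^{-1}$. You have already shown $D/p\in\mathcal{R}$ with unit constant term $m$. So Proposition~\ref{norm_and_reciprocal_prop} puts $1/F_d$ itself in $\mathcal{R}=\bigoplus_a e^{at}\mathcal{A}$, with components in $\mathcal{A}$ rather than $p^{-1}\mathcal{A}$. Theorem~\ref{application_thm} then applies with $r=0$ and $\Theta(n)=v_p(n!)$ for the $a\equiv 0$ component. Together with Corollary~\ref{change_of_basis_la_cor} for $s=1$, this finishes exactly as you intended.
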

\begin{proof}
    Let 
    \[F_d(t)\coloneqq\sum_{n=0}^{\infty}\frac{t^{dn}}{(dn)!}\]
    for each $d\ge2$.
    
    (1) Since $p^{\varphi(d)}\equiv1\mod{d}$, there are some $h$ such that $d\mid \ell$ and $h\mid \varphi(d)$. For such $h$, $F_d(t)\in\mathcal{R}_{p,h,1}$ by the sequence generated by $F_d$ has a period of length $d$. (For the notation of $\mathcal{R}_{p,h,1}$, see definition~\eqref{R_r_def(R_r_alpha_decomposition_equ)}.) Since $F_d(0)=1$, we obtain the conclusion by Corollary~\ref{Reciplocal_cor}.
    
    (2) Since the map
    \[f:\mathbb{Z}_{\ge0}\rightarrow\mathcal{O}_{\mathbb{C}_p};n\mapsto\begin{cases}
        1 & \text{if }n\equiv0\mod{p}\\
        0 & \text{if }n\not\equiv0\mod{p}\\
    \end{cases}\]
    can be continuously extended to a $1$-Lipschitz continuous function on $\mathbb{Z}_p$, by Proposition~\ref{Basic_prop}, there is a $G(t)\in\mathbb{C}_p\langle t|\ge\Theta_0\rangle$ such that $F_p(t)=e^tG(t)$.

    We can calculate the coefficients of $G(t)$ explicitly as
    \begin{align*}
        G(t)&=e^{-t}F_p(t)\\
        &=\left(\sum_{n=0}^{\infty}(-1)^n\frac{t^n}{n!}\right)\left(\sum_{n=0}^{\infty}\left(\frac{1}{p}\sum_{\nu=0}^{p-1}\zeta_p^{n\nu}\right)\frac{t^n}{n!}\right)\\
        &=\sum_{n=0}^{\infty}\left(\frac{1}{p}\sum_{i=0}^{n}\binom{n}{i}(-1)^{n-1}\sum_{\nu=0}^{p-1}\zeta_p^{i\nu}\right)\frac{t^n}{n!}\\
        &=\sum_{n=0}^{\infty}\frac{1}{p}\sum_{\nu=0}^{p-1}(\zeta_p^{\nu}-1)^n\frac{t^n}{n!}
    \end{align*}
    Where $\zeta_p$ is a primitive $p$-th root of unity.
    
    Since $v_p(\zeta_p-1)\ge1/(p-1)$, we have
    \[v_p\left(\sum_{\nu=0}^{p-1}(\zeta_p-1)^n\right)\ge\frac{n}{p-1}>\frac{n-1}{p-1}\ge v_p(n!).\]
    Because $\sum_{\nu=0}^{p-1}(\zeta_p-1)^n\in\mathbb{Q}$, we have $v_p(\sum_{\nu=0}^{p-1}(\zeta_p-1)^n)\in\mathbb{Z}$, and thus,
    \[v_p\left(\sum_{\nu=0}^{p-1}(\zeta_p-1)^n\right)-1\ge v_p(n!).\]

    Hence, we have $G(t)\in\mathbb{C}_p\langle t|\ge v_p(n!)\rangle$.

    Since $G(0)=1,$ we have $1/G(t)\in\mathbb{C}_p\langle t|\ge v_p(n!)\rangle$ by Proposition~\ref{subring_prop}.

    Note that $\mathbb{C}_p\langle t|\ge v_p(n!)\rangle\subset\mathbb{C}_p\langle t|\ge\Theta_0\rangle\cap\mathbb{C}_p\langle t|v_p(\lfloor n/p^s\rfloor!)\rangle$. Now, since
    \[\sum_{n=0}^{\infty}E_n^{(p)}\frac{t^n}{n!}=\frac{1}{F_p(t)}=e^{-t}\cdot\frac{1}{G(t)}\]
    we obtain the result for the case of $p\ne2$ by Proposition~\ref{Basic_prop}. 
    
    For the case of $p=2$,
    \[\sum_{n=0}^{\infty}E_n^{(2)}\frac{t^n}{n!}=e^{-t}\cdot\frac{1}{G(t)}=e^t\cdot\frac{e^{-2t}}{G(t)}\]
    and $e^{-2t}\in\mathbb{C}_2\langle t|\ge v_2(n!)\rangle$ give
    \[E_{n+2^k}^{(2)}\equiv E_n^{(2)}\mod{2^{k}}\]
    holds for all $k,n\ge0$. This is equivalent to \eqref{d=p_cong}.

    (3) Let $h$ be a positive integer such that $m\mid \ell$.
    
    Let $\zeta_m$ be a primitive $m$-th root of unity. Since 
    \[\sum_{i=0}^{m}\zeta_m^{in'}=\begin{cases}
        1 & (\text{if }m\mid n')\\
        0 & (\text{if }m\nmid n'),
    \end{cases}\]
    we have 
    \[F_d(t)=\sum_{n'=0}^{\infty}\left(\frac{1}{m}\sum_{i=1}^{m}\zeta_m^{in'}\right)\frac{t^{pn'}}{(pn')!}.\]
    Now $p$ is coprime to $m$ so 
    \[\sum_{i=0}^{m}\zeta_m^{in'}=\sum_{i=0}^{m}\zeta_m^{ipn'}.\]
    Therefore,
    \begin{align*}
        F_d(t)
        &=\frac{1}{m}\sum_{i=1}^{m}\sum_{n'=0}^{\infty}\frac{(\zeta_m^it)^{pn'}}{(pn')!}\\
        &=\frac{1}{m}\sum_{i=1}^{m}F_p(\zeta_m^it)\\
        &=\frac{1}{m}\sum_{i=1}^{m}e^{-\zeta_m^it}G(\zeta_m^it)
    \end{align*}
    where $G(t)\in\mathbb{C}_p\langle t|\ge v_p(n!)\rangle$ is the same one appearing in the proof of (2).

    Since $m\mid \ell$, $\zeta_m\in\mathcal{O}_h$, where $\mathcal{O}_h$ is the valuation ring of the unramified extension of $\mathbb{Q}_p$ of degree $h$. 

    Because $F_d(0)=1$, we can write
    \[\frac{1}{F_d(t)}=\sum_{\alpha^{p^h}=\alpha}e^{\alpha t}G_\alpha(t)\]
    with some $\{G_\alpha(t)\}\subset\mathbb{C}_p\langle t|\ge v_p(n!)\rangle$ by Proposition~\ref{norm_and_reciprocal_prop}. Thus, we obtain the conclusion by Corollary~\ref{change_of_basis_la_cor} and Theorem~\ref{application_thm}.
\end{proof}

\subsection{Derangement numbers}
For each nonnegative integer $\rho$, the $\rho$-derangement numbers $\{D_n^{(\rho)}\}_{n=0}^\infty$ are defined by
\[\frac{t^\rho e^{-t}}{(1-t)^{\rho+1}}=\sum_{n=0}^{\infty}D_n^{(\rho)}\frac{t^n}{n!}.\]
Derangement numbers, the numbers of complete permutations, is a special case of $\rho$-derangement numbers where $\rho=0$.

For another equivalent definition (original definition) of $\rho$-derangement numbers and their combinatorial meaning, see \cite{Wang_and_Miska_and_Mezo}. We note that our notation differs from that of \cite{Wang_and_Miska_and_Mezo}. In \cite{Wang_and_Miska_and_Mezo}, $r$ is used for the place of $\rho$, and the sequence is called $r$-derangement numbers and denoted by $D_r(n)$. To avoid confusion with parameter $r\in\mathbb{R}$ in this paper and to make it clear that the index of the sequence is $n$, we write $D_n^{(\rho)}$ here instead.

In \cite{Wang_and_Miska_and_Mezo}, C. Wang, P. Miska, and I. Mez\H{o} showed the following congruence.
\begin{theorem}[Theorem 11 in \cite{Wang_and_Miska_and_Mezo}]
\label{Wang_Miska_Mezo_thm}
    For each $\rho,d\ge1$, if $n_1,n_2$ are integers such that $n_1\equiv n_2\mod{d}$ then
    \[(-1)^{n_1}D_{n_1}^{(\rho)}\equiv(-1)^{n_2}D_{n_2}^{(\rho)}\mod{d}.\]
\end{theorem}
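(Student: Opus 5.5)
The plan is to derive the congruence from Proposition~\ref{Basic_prop}(1) with $\alpha=1$ and $r=0$, applied to the signed sequence $c_n=(-1)^nD_n^{(\rho)}$. The difference $n_1-n_2$ may be any multiple of $d$, whereas Proposition~\ref{Basic_prop} controls shifts by $p^k$. So I first reduce to prime powers.

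\textbf{Reduction to prime powers.} Write $d=\prod p^{k_p}$. By the Chinese remainder theorem it suffices to show, for each prime $p$ and each $k\ge0$, that
\[
n_1\equiv n_2 \pmod{p^k}\ \Longrightarrow\ c_{n_1}\equiv c_{n_2}\pmod{p^k}.
\]
The $D_n^{(\rho)}$ are integers, so the $c_n$ lie in $\mathbb{Z}$ and this suffices. If $n_2=n_1+jp^k$ with $j\ge0$, iterating the one-step congruence
\[
c_{n+p^k}\equiv c_n \pmod{p^k}\quad\text{for all } n\ge0
\]
gives the claim. This one-step congruence is condition (a) of Proposition~\ref{Basic_prop}(1) with $\alpha=1$ and $r=0$.

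\textbf{Generating function.} The exponential generating function of $\{c_n\}$ is
\[
F(-t)=\frac{(-t)^\rho e^{t}}{(1+t)^{\rho+1}}.
\]
By Proposition~\ref{Basic_prop}(1), (a)$\Leftrightarrow$(c), it suffices to show
\[
e^{-t}F(-t)=\frac{(-t)^\rho}{(1+t)^{\rho+1}}\in\mathbb{C}_p\langle t|\ge\lfloor\log_pn\rfloor\rangle.
\]
The exponential function cancels exactly, so no approximation of $e^{\alpha t}$ is needed. Writing $e^{-t}F(-t)=\sum_n b_n t^n/n!$, the power series $(-t)^\rho(1+t)^{-\rho-1}$ has integer ordinary coefficients. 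Hence $b_n=n!\,e_n$ with $e_n\in\mathbb{Z}$, and so $v_p(b_n)\ge v_p(n!)$.

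\textbf{Key estimate.} It remains to check $v_p(n!)\ge\lfloor\log_pn\rfloor$ for $n\ge1$; for $n=0$ there is nothing to check, since $\log_p0=-\infty$ by convention. If $p^j\le n<p^{j+1}$, then by Proposition~\ref{val_of_n!_prop} (Legendre),
\[
v_p(n!)\ge v_p\bigl((p^j)!\bigr)=\frac{p^j-1}{p-1}\ge j.
\]
This is the only step needing an actual estimate, and it is elementary, so I expect no real obstacle. The main point is to notice that multiplying by $e^{-t}$ turns the generating function into one with integral ordinary coefficients.

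The sign twist $(-1)^n$ is exactly what makes $\alpha=1$ work. Without it one would need $\alpha=-1$, which is harmless for odd $p$ but for $p=2$ would change the period. The argument does not use $\rho\ge1$, so it also covers $\rho=0$.
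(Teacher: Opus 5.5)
Your proof is correct and is essentially the paper's argument. The paper proves Corollary~\ref{r-derangement_cor} by noting that $e^{t}\sum_{n}D_n^{(\rho)}t^n/n!=t^\rho/(1-t)^{\rho+1}$ lies in $\mathbb{C}_p\langle t|\ge v_p(n!)\rangle\subset\mathbb{C}_p\langle t|\ge\Theta_0\rangle$, applies Proposition~\ref{Basic_prop}, and then gets the theorem from the Chinese Remainder Theorem, just as you do.

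Your substitution $t\mapsto -t$ to reach $\alpha=1$ is only cosmetic. The paper takes $\alpha=-1$, where condition (b) is directly the $1$-Lipschitz continuity of $n\mapsto(-1)^nD_n^{(\rho)}$ for every $p$, including $p=2$. So your closing remark that $\alpha=-1$ would change the period at $p=2$ is inaccurate, though it does not affect your argument.
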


Although their proof in \cite{Wang_and_Miska_and_Mezo} is not particularly difficult, our framework provides a simple alternative proof that yields a slightly stronger result and, moreover, reveals the $p$-adic smoothness underlying it.

\begin{corollary}
\label{r-derangement_cor}
    Let $\rho$ be a nonnegative integer. Let $p$ be a prime. Then, we have
    \[(-1)^{n+p^k}D_{n+p^k}^{(\rho)}\equiv (-1)^nD_n^{(\rho)}\mod{p^{k+r}}\quad\text{for all }k,n\ge0\]
    where $r=0$ if $\rho=0$ and $r=v_p(\rho!)-\lfloor\log_p\rho\rfloor\ge0$ if $\rho\ge1$. 

    Moreover, the continuous extension to $\mathbb{Z}_p$ of the map $\mathbb{Z}_{\ge0}\to\mathbb{Q}_p; n\mapsto(-1)^nD_n^{(\rho)}$ is locally analytic, at least as strong as of order $1$.
\end{corollary}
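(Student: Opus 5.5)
The plan is to work directly with the generating function of the twisted sequence $\{(-1)^nD_n^{(\rho)}\}$ and apply Proposition~\ref{Basic_prop} with $\alpha=1$. Replacing $t$ by $-t$ in the definition gives
\[\sum_{n=0}^{\infty}(-1)^nD_n^{(\rho)}\frac{t^n}{n!}=\frac{(-t)^\rho e^{t}}{(1+t)^{\rho+1}}=e^{t}G(t),\qquad G(t)\coloneqq\frac{(-1)^\rho t^\rho}{(1+t)^{\rho+1}}.\]
Both claims then reduce to statements about $G$:
\begin{itemize}
\item $G\in\mathbb{C}_p\langle t|\ge\lfloor\log_pn\rfloor+r\rangle$ gives the congruence, by Proposition~\ref{Basic_prop}(1) (c)$\Rightarrow$(a);
\item $G\in\mathbb{C}_p\langle t|v_p(\lfloor n/p\rfloor!)\rangle$ gives local analyticity of order $1$, by Proposition~\ref{Basic_prop}(2).
\end{itemize}

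Next I compute the coefficients of $G$ explicitly. Expanding $(1+t)^{-\rho-1}=\sum_j(-1)^j\binom{\rho+j}{\rho}t^j$ and writing $G(t)=\sum_n b_n t^n/n!$, we get $b_n=0$ for $n<\rho$ and
\[b_n=\pm\, n!\binom{n}{\rho}\qquad(n\ge\rho).\]
Since $\binom{n}{\rho}\in\mathbb{Z}$, this gives $v_p(b_n)\ge v_p(n!)$. This crude bound will suffice for both claims.

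For the Lipschitz bound, first take $\rho=0$, where $r=0$. Here $v_p(b_n)=v_p(n!)\ge\lfloor\log_pn\rfloor$, because $p^{\lfloor\log_pn\rfloor}\le n$ divides one of the factors of $n!$. Now take $\rho\ge1$ and $n\ge\rho$. I split
\[v_p(n!)=v_p(\rho!)+v_p\bigl((\rho+1)(\rho+2)\cdots n\bigr).\]
For each $j$ with $\lfloor\log_p\rho\rfloor<j\le\lfloor\log_pn\rfloor$, the number $p^j$ lies in $(\rho,n]$. These are distinct factors of the product, each contributing at least $1$. Hence
\[v_p\bigl((\rho+1)\cdots n\bigr)\ge\lfloor\log_pn\rfloor-\lfloor\log_p\rho\rfloor,\]
and so $v_p(b_n)\ge\lfloor\log_pn\rfloor+r$. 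The remaining coefficients, with $n<\rho$, are zero and impose no condition. The same argument with $n=\rho$ (or simply $p^{\lfloor\log_p\rho\rfloor}\le\rho$) gives $r\ge0$. The only delicate point in this step is keeping careful track of the floors in the range of $j$.

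For local analyticity, Legendre's formula (Proposition~\ref{val_of_n!_prop}) gives $v_p(n!)=\lfloor n/p\rfloor+v_p(\lfloor n/p\rfloor!)$. Therefore
\[v_p(b_n)-v_p(\lfloor n/p\rfloor!)\ge\lfloor n/p\rfloor\to\infty,\]
so $G\in\mathbb{C}_p\langle t|v_p(\lfloor n/p\rfloor!)\rangle$, and Proposition~\ref{Basic_prop}(2) with $s=1$ finishes the proof. Note that this avoids the restriction $p+s\ge3$ entirely, since we bound the coefficients directly rather than via $\mathbb{C}_p\langle t|\ge n\rangle$. The values $(-1)^nD_n^{(\rho)}$ are integers, so the extension takes values in $\mathbb{Z}_p\subset\mathbb{Q}_p$.
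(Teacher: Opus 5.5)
Your proof is correct and is essentially the paper's argument. The paper applies Proposition~\ref{Basic_prop} with $\alpha=-1$ directly to $e^{-t}\cdot t^\rho/(1-t)^{\rho+1}$, which is equivalent to your substitution $t\mapsto -t$ with $\alpha=1$. It obtains $v_p(b_n)\ge v_p(n!)$ from the subring property of $\mathbb{C}_p\langle t|\ge v_p(n!)\rangle$ rather than from your explicit formula $b_n=\pm n!\binom{n}{\rho}$. It then asserts without proof that $v_p(n!)-\lfloor\log_pn\rfloor$ is nondecreasing, which your count of the powers $p^j\in(\rho,n]$ establishes, and it uses the same fact $v_p(n!)-v_p(\lfloor n/p\rfloor!)=\lfloor n/p\rfloor\to\infty$ for the order-$1$ analyticity. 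Your remark about avoiding the restriction $p+s\ge3$ is moot, since that condition holds automatically when $s=1$.
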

By the Chinese Remainder Theorem, Theorem~\ref{Wang_Miska_Mezo_thm} follows from Corollary~\ref{r-derangement_cor}.
\begin{proof}[Proof of Corollary~\ref{r-derangement_cor}]
    Since
    \[\frac{1}{1-t}=\sum_{n=0}^{\infty}t^n\in\mathbb{C}_p\langle t|\ge v_p(n!)\rangle,\]
    we have 
    \[\frac{t^\rho}{(1-t)^{\rho+1}}\in\mathbb{C}_p\langle t|\ge v_p(n!)\rangle\]
    because $\mathbb{C}_p\langle t|\ge v_p(n!)\rangle$ is a subring of $\mathbb{C}_p\llbracket t\rrbracket$ by Lemma~\ref{concrete_Theta_satisfying_condition_lemma} and Proposition~\ref{subring_prop}. Since
    \[\sum_{n=0}^{\infty}D_n^{\rho}\frac{t^n}{n!}=e^{-t}\frac{t^\rho}{(1-t)^{\rho+1}},\]
    by Proposition~\ref{Basic_prop}, it follows that the congruence $(-1)^{n+p^k}D_{n+p^k}^{(\rho)}\equiv (-1)^nD_n^{(\rho)}\mod{p^k}$ and the interpolating function $\mathbb{Z}_p\rightarrow\mathbb{Q}_p;n\mapsto(-1)^nD_n^{(\rho)}$ is locally analytic of order $1$.

    Suppose $\rho\ge1$. Since $1/(1-t)^{\rho+1}\in\mathbb{C}_p\langle t|\ge v_p(n!)\rangle$, if we set 
    \[F(t)=\frac{t^\rho}{(1-t)^{\rho+1}}=\sum_{n=0}^{\infty}b_n\frac{t^n}{n!},\]
    we have $v_p(b_0)=\cdots=v_p(b_{\rho-1})=v_p(0)=\infty$ and $v_p(b_n)\ge v_p(n!)$ for all $n\ge \rho$.
    Since we set $r=v_p(\rho!)-\lfloor\log_p\rho\rfloor$, we have $\Theta_r(\rho)=v_p(\rho!)$.
    Because $v_p(n!)-\Theta_r(n)$ is monotonically increasing for $n\ge1$, we have
    \[v_p(b_n)\ge\Theta_r(n)\quad\text{for all }n\in\mathbb{Z}_{\ge0}.\]
    Thus, we obtain $F(t)\in\mathbb{C}_p\langle t|\ge\Theta_r\rangle$, and the congruence 
    $(-1)^{n+p^k}D_{n+p^k}^{(\rho)}\equiv (-1)^nD_n^{(\rho)}\mod{p^{k+r}}$ holds by Proposition~\ref{Basic_prop}.
\end{proof}

\subsection{Fibonacci Numbers and Lucas numbers}
Fibonacci numbers $\{F_n\}_{n=0}^{\infty}$ are defined by the recurrence formula
\[F_{0}=0,\quad F_1=1,\quad F_{n+2}=F_n+F_{n+1}.\]
There is a well-known explicit formula for Fibonacci numbers:
\begin{equation}
\label{Fibnacci_equ}
    F_n=\frac{1}{\sqrt{5}}\left(\left(\frac{1+\sqrt{
5}}{2}\right)^n-\left(\frac{1-\sqrt{5}}{2}\right)^n\right)=\frac{1}{-\sqrt{5}}\left(\left(\frac{1-\sqrt{
5}}{2}\right)^n-\left(\frac{1-(-\sqrt{5})}{2}\right)^n\right).
\end{equation}

The sequence of Lucas numbers $\{L_n\}_{n=0}^\infty$ has a similar definition:
\[L_0=2,\quad L_1=1,\quad L_{n+2}=L_n+L_{n+1}.\]
The explicit formula for the Lucas numbers is 
\begin{equation}
\label{Lucas_equ}
    L_n=\left(\frac{1+\sqrt{
5}}{2}\right)^n+\left(\frac{1-\sqrt{5}}{2}\right)^n.
\end{equation}

For these numbers, we have the following congruences. For the case of $p\ne2,5$, the proof is essentially given by D. D. Wall in \cite{Wall}, while a statement in the same form as the following is presented by Lengyel in \cite{Lengyel}.
\begin{theorem}[Combined result of Lemma~3 of \cite{Jacobson} ($p=2$)\footnote{Precisely, Lemma~3 in \cite{Jacobson} is only for the case $k\ge3$, but the validity for $k=0, 1,$ and $2$ follows immediately from a finite calculation.} and Theorem~4.1 of \cite{Lengyel} ($p\ne2,5$)]
\label{Fibonacci_thm}
    Let $p\ne5$ be a prime.

    (1) If $p\equiv 1,4\mod{5}$, then, we have 
    \begin{align*}
        &F_{n+(p-1)p^k}\equiv F_n\mod{p^{k+1}},\\
        &L_{n+(p-1)p^k}\equiv L_n\mod{p^{k+1}}
    \end{align*}
    for all $k\ge0$ and $n\ge0$.

    (2) If $p\equiv 2,3\mod{5}$ and $p\ne2$, then, we have 
    \begin{align*}
        &F_{n+2(p+1)p^k}\equiv F_n\mod{p^{k+1}},\\
        &L_{n+2(p+1)p^k}\equiv L_n\mod{p^{k+1}}
    \end{align*}
    for all $k\ge0$ and $n\ge0$. 

    (3) We have
    \begin{align*}
        &F_{n+(2^2-1)2^k}\equiv F_n\mod{2^{k+1}},\\
        &L_{n+(2^2-1)2^k}\equiv L_n\mod{2^{k+1}}
    \end{align*}
    for all $k\ge0$ and $n\ge0$. 
\end{theorem}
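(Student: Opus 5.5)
The plan is to express $F_n$ and $L_n$ through the exponential generating functions $\sum F_n t^n/n! = (e^{\phi t}-e^{\psi t})/\sqrt5$ and $\sum L_n t^n/n! = e^{\phi t}+e^{\psi t}$, where $\phi=(1+\sqrt5)/2$ and $\psi=(1-\sqrt5)/2$ are regarded as elements of $\mathbb{C}_p$. We then invoke Proposition~\ref{Basic_prop} and the decomposition \eqref{M'_r(R)_Decomposition_equ}. Since $p\ne5$, the elements $\phi,\psi$ are units in $\mathcal{O}_{\mathbb{C}_p}$, because $\phi\psi=-1$. Also $\sqrt5\in\mathcal{O}^\times$ when $p\ne2,5$, and $\sqrt5$ is likewise a unit when $p=2$.

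For each root $\beta\in\{\phi,\psi\}$, the sequence $\{\beta^n\}$ has generating function $e^{\beta t}$. We write $\beta=\omega+p\gamma$, where $\omega$ is the Teichmüller representative of $\bar\beta$ (so $\omega^{q-1}=1$) and $\gamma\in\mathcal{O}_{\mathbb{C}_p}$. This is possible because $\beta$ lies in an unramified extension $\mathbb{Q}_p(\sqrt5)$ of degree $h\in\{1,2\}$ when $p$ is odd. Then $e^{\beta t}=e^{\omega t}e^{p\gamma t}$, and $e^{p\gamma t}\in\mathbb{C}_p\langle t|\ge n\rangle\subset\mathbb{C}_p\langle t|\ge\Theta_1\rangle$ by Lemma~\ref{concrete_Theta_satisfying_condition_lemma}(2)(a).

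By Proposition~\ref{Basic_prop} (equivalently Theorem~\ref{A,M',R_properties_thm}(1)), this gives $\{\beta^n\}\in M_1^{(\omega)}(\mathcal{O}_{\mathbb{C}_p})$. Provided $\omega^\ell=1$ for the chosen $\ell$, the congruence $\beta^{n+\ell p^k}\equiv\beta^n \pmod{p^{k+1}}$ then holds for all $k,n$. Dividing by the unit $\sqrt5$, or adding the two terms for Lucas, preserves the congruence.

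It therefore remains to check $\omega^\ell=1$, that is, $\bar\beta^\ell=1$ in $\overline{\mathbb{F}_p}$, in each case.

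\textbf{Case (1).} If $p\equiv1,4\pmod 5$, then $5$ is a square mod $p$, so $\bar\beta\in\mathbb{F}_p^\times$ and $\bar\beta^{p-1}=1$. Here $\ell=p-1$.

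\textbf{Case (2).} If $p\equiv2,3\pmod5$ with $p$ odd, then $\bar\beta\in\mathbb{F}_{p^2}\setminus\mathbb{F}_p$ and Frobenius swaps $\bar\phi$ and $\bar\psi$. Hence $\bar\phi^{p+1}=\bar\phi\bar\psi=-1$, which gives $\bar\phi^{2(p+1)}=1$, and similarly for $\bar\psi$. Here $\ell=2(p+1)$, and $\ell\mid p^2-1$, so $h=2$ is consistent with $K_h$.

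\textbf{Case (3).} If $p=2$, the polynomial $x^2-x-1$ is irreducible over $\mathbb{F}_2$. So $\phi,\psi\in\mathcal{O}_2$, the ring of integers of the unramified quadratic extension, and $\bar\beta\in\mathbb{F}_4^\times$, giving $\bar\beta^3=1$. Here $\ell=3=2^2-1$.

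The main obstacle is the case $p=2$. There the factor $1/\sqrt5$ must be checked to be a $2$-adic unit, which holds since $v_2(5)=0$. One must also verify that $\phi,\psi$ genuinely lie in $\mathcal{O}_2$ despite the denominator $2$. This follows because they are roots of the monic polynomial $x^2-x-1$ with unit discriminant $5$, so the extension is unramified.

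Everything else is the formal machinery: Proposition~\ref{Basic_prop}(1) with $r=1$, together with closure of $\mathbb{C}_p\langle t|\ge\Theta_1\rangle$ under the operations used. Since all components lie in $M'_1$, no exceptional $n_k$ arises, and the congruences hold for all $n\ge0$.
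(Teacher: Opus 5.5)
Your proposal is correct and follows essentially the paper's proof. The paper writes the exponential generating functions as unit-coefficient combinations of $e^{\beta t}$ and applies Theorem~\ref{application_thm}, which absorbs the shift $e^{\beta t}=e^{\omega t}e^{(\beta-\omega)t}$ via Theorem~\ref{A,M',R_properties_thm}(1); you perform this Teichm\"uller shift by hand and apply Proposition~\ref{Basic_prop} directly, with the same case analysis for $\bar\beta^{\ell}=1$ (your identity $\bar\phi^{p+1}=\bar\phi\bar\psi=-1$ is just a quicker version of the paper's computation $x^{2(p+1)}=(x+1)^{p+1}=1$ in $\mathbb{F}_{p^2}$).
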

\begin{proof}
    By the formula \eqref{Fibnacci_equ} and \eqref{Lucas_equ}, we have
    \begin{align*}
        &\sum_{n=0}^{\infty}F_n\frac{t^n}{n!}=\frac{1}{\alpha}\left(\exp{(\frac{1+\alpha}{2}t)-\exp{(\frac{1-\alpha}{2}t)}}\right)\quad\text{and}\\
        &\sum_{n=0}^{\infty}L_n\frac{t^n}{n!}=\exp{(\frac{1+\alpha}{2}t)+\exp{(\frac{1-\alpha}{2}t)}}.
    \end{align*}
    where $\alpha$ is a root of $x^2-5$. Now $p\ne5$, so $\pm1,\pm1/\sqrt{5}\in\mathbb{C}_p\langle t|\ge\Theta_1\rangle$. Therefore, by Theorem~\ref{application_thm}, the problem is reduced to determine the order $\ell$ satisfying $((1\pm\alpha)/2)^\ell\equiv1\mod{p}$.
    
    First, consider the case of $p\ne2$. By the law of quadratic reciprocity, Hensel's lifting lemma, and an easy observation that $\alpha\not\equiv0,\pm1\mod{p}$, we have
    \[\frac{1\pm\alpha}{2}\in\begin{cases}
        \mathbb{Z}_p^{\times} & (\text{if }p\equiv1,4\mod{5}),\\
        \mathcal{O}_2^{\times}\setminus\mathbb{Z}_p^{\times} & (\text{if }p\equiv2,3\mod{5}\text{ and }p\ne2),
    \end{cases}\]
    where $\mathcal{O}_2$ denotes the ring of integers of the unramified quadratic extension of $\mathbb{Q}_p$, as defined in Subsection~1.1. 

    Since all of $x\in\mathbb{Z}_p^\times$ satisfies $x^{p-1}\equiv 1\mod{p}$, we obtain (1) by Theorem~\ref{application_thm}.

    In the case of $p\equiv2,3\mod{5}$ and $p\ne2$, $\beta_1\coloneqq(1+\alpha)/2$ and $\beta_2\coloneqq(1-\alpha)/2$ satisfies $\beta_i^{2(p+1)}\equiv1\mod{p}$. Indeed, they are the roots of $x^2-x-1$, and in the field $\mathbb{F}_{p^2}\cong\mathbb{F}_p[x]/(x^2-x-1)$, we have
    \[x^{2(p+1)}=(x+1)^{p+1}=(x^p+1)(x+1)=(2-x)(x+1)=1\]
    where we used the fact that $x^p=1-x$ because the Frobenius map swaps the two roots of $x^2-x-1$ and the sum of the roots is $1$.
    
    Next consider the case $p=2$, since $5\not\equiv1\mod{8}$, $\alpha\notin\mathbb{Z}_2$. Since $\mathbb{Q}_2(\alpha)/\mathbb{Q}_2$ is an unramified extension of degree $2$ and $v_2((1\pm\alpha)/2)=0$, we obtain $(1\pm\alpha)/2\in\mathcal{O}_2^{\times}$. All of the element of $\mathcal{O}_2^\times$ are mapped in $\mathbb{F}_4$ by the reduction map $\mathcal{O}_{\mathbb{C}_2}\rightarrow\overline{\mathbb{F}_2}$. Thus, $((1\pm\alpha)/2)^3\equiv1\mod{p}$. So we obtain (3) by Theorem~\ref{application_thm}.
\end{proof}

We also obtain the periodicity of Fibonacci numbers and Lucas numbers modulo $5^k$.
\begin{theorem}
    We have 
    \begin{align*}
        &F_{n+4\cdot5^k}\equiv F_n\mod{5^k},\\
        &L_{n+4\cdot5^k}\equiv L_n\mod{5^{k+1}}
    \end{align*}
    for all $k\ge0$ and $n\ge0$.
\end{theorem}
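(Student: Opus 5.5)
The approach is to treat $p=5$ like the other primes, via Theorem~\ref{application_thm} with $\ell=4$ and $h=1$. The difference is that $\sqrt5$ is now ramified, so $(1\pm\sqrt5)/2$ are no longer congruent to roots of unity modulo $5$. They are only congruent to one another modulo $\sqrt5$. The plan is to factor out a common exponential $e^{3t}$ and push the ramified part into the power series $F_i(t)$. The price is a careful valuation estimate, which is also why the modulus drops to $5^k$ for Fibonacci while staying $5^{k+1}$ for Lucas.

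Let $\alpha$ be a root of $x^2-5$, so $v_5(\alpha)=1/2$. Write $\beta_{1,2}=(1\pm\alpha)/2$ and
\[
\gamma_{1,2}=\beta_{1,2}-3=\frac{\pm\alpha-5}{2}.
\]
Then $v_5(\gamma_i)=1/2$, $\gamma_1+\gamma_2=-5$ and $\gamma_1-\gamma_2=\alpha$. By the generating function formulas in the proof of Theorem~\ref{Fibonacci_thm},
\[
\sum_{n=0}^{\infty}L_n\frac{t^n}{n!}=e^{3t}\bigl(e^{\gamma_1t}+e^{\gamma_2t}\bigr),\qquad
\sum_{n=0}^{\infty}F_n\frac{t^n}{n!}=e^{3t}\,\frac{e^{\gamma_1t}-e^{\gamma_2t}}{\alpha}.
\]
Here $3\in\mathbb{Z}_5^\times$ and $3^4=81\equiv1\pmod 5$. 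So if I show that the two brackets lie in $\mathbb{C}_5\langle t|\ge\Theta_1\rangle$ and $\mathbb{C}_5\langle t|\ge\Theta_0\rangle$ respectively, then Theorem~\ref{application_thm} applies with $m=1$, $a_1=3$ and $G=0$. Taking the auxiliary function $\Theta\equiv\infty$ makes every $n_k=0$, so the congruences hold for all $n\ge0$.

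The key step is the valuation estimate for the coefficients. The $n$-th coefficients (with respect to $t^n/n!$) are $b_n=\gamma_1^n+\gamma_2^n$ for Lucas and $b_n=(\gamma_1^n-\gamma_2^n)/(\gamma_1-\gamma_2)=\sum_{i=0}^{n-1}\gamma_1^i\gamma_2^{n-1-i}$ for Fibonacci. Both are symmetric in $\gamma_1,\gamma_2$, so they lie in $\mathbb{Z}[1/2]\subset\mathbb{Z}_5$ and their valuations are integers. From $v_5(\gamma_i)=1/2$ I get $v_5(b_n)\ge\lceil n/2\rceil$ for Lucas and $v_5(b_n)\ge\lceil (n-1)/2\rceil$ for Fibonacci.

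It then remains to compare these bounds with the targets. For Lucas I need $b_0=2$ to have $v_5(b_0)\ge0$ and $\lceil n/2\rceil\ge\lfloor\log_5 n\rfloor+1$ for $n\ge1$. For Fibonacci I need $b_0=0$ and $\lceil (n-1)/2\rceil\ge\lfloor\log_5n\rfloor$ for $n\ge1$. The small cases $n\le5$ are checked by hand, and for larger $n$ the linear bound dominates the logarithm easily.

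I expect the main obstacle to be exactly this estimate. The crude bound $n/2$ fails at $n=1$ for Lucas, and the argument rests on the rationality, hence integrality of the valuation, of the symmetric expressions (for $n=1$ one has $b_1=-5$ directly). For Fibonacci the factor $1/\alpha$ costs half a unit of valuation, which is why only $r=0$, i.e.\ modulus $5^k$, can be obtained there.
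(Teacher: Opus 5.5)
Your proposal is correct and is essentially the paper's argument: factor out $e^{at}$ with $a\in\mathbb{Z}_5^\times$ and $a^4\equiv1\pmod 5$, put the ramified part $e^{(\beta_i-a)t}$ into a space $\mathbb{C}_5\langle t|\ge\Theta_r\rangle$, and apply Theorem~\ref{application_thm}. The only difference is bookkeeping: the paper takes $a=1/2$, places $e^{\pm\alpha t/2}$ in $\mathbb{C}_5\langle t|\ge n/2\rangle\subset\mathbb{C}_5\langle t|\ge\Theta_{1/2}\rangle$ for both $\alpha\sum F_nt^n/n!$ and $\sum L_nt^n/n!$, and then sharpens the resulting modulus $5^{k+1/2}$ at the end using integrality of $v_5$ and $v_5(\alpha)=1/2$, whereas you take $a=3$ and apply the integrality rounding to the coefficients $\gamma_1^n\pm\gamma_2^n$, landing directly in $\Theta_1$ and $\Theta_0$.
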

\begin{proof}
    We have
    \begin{align*}
        \sum_{n=0}^{\infty}F_n\frac{t^n}{n!}&=\frac{1}{\alpha}\left(\exp(\frac{1}{2}t)\exp(\frac{\alpha}{2}t)-\exp(\frac{1}{2}t)\exp(-\frac{\alpha}{2}t)\right),\\
        \sum_{n=0}^{\infty}L_n\frac{t^n}{n!}&=\exp(\frac{1}{2}t)\exp(\frac{\alpha}{2}t)+\exp(\frac{1}{2}t)\exp(-\frac{\alpha}{2}t)
    \end{align*}
    where $\alpha$ is a root of $x^2-5$.
    Now, $\pm\frac{1}{2}\in\mathbb{Z}_5^{\times}$ and $(\pm\frac{1}{2})^4\equiv1\mod{5}$. Moreover, since $v_5(\pm\frac{\alpha}{2})=\frac{1}{2}$, we have
    \[\exp(\pm\frac{\alpha}{2}t)\in\mathbb{C}_5\langle t|\ge n/2\rangle\subset\mathbb{C}_5\langle t|\ge\Theta_{\frac{1}{2}}\rangle.\]
    Therefore, 
    \[\sum_{n=0}^{\infty}\alpha F_n\frac{t^n}{n!},~\sum_{n=0}^{\infty}L_n\frac{t^n}{n!}\in\bigoplus_{\alpha^4=1}e^{\alpha t}\mathbb{C}_5\langle t|\ge\Theta_{\frac{1}{2}}\rangle\]
    and it follows that
    \[\alpha F_{n+4\cdot5^k}\equiv\alpha F_n,\quad L_{n+4\cdot 5^k}\equiv L_n\mod{p^{k+\frac{1}{2}}}\]
    for all $k,n\in\mathbb{Z}_{\ge0}$.
    Because $v_5(\alpha)=\frac{1}{2}$ and $v_p(L_{n+4\cdot 5^k}-L_n)\in\mathbb{Z}$, we obtain 
    \begin{align*}
        &F_{n+4\cdot5^k}\equiv F_n\mod{5^k},\\
        &L_{n+4\cdot5^k}\equiv L_n\mod{5^{k+1}}
    \end{align*}
    for all $k,n\in\mathbb{Z}_{\ge0}$.
\end{proof}

We can generalize this argument to some extent. Indeed,
if a sequence$\{c_n\}_{n=0}^{\infty}\subset\mathbb{C}_p$ satisfies a recurrence relation
\[c_n=a_1c_{n-1}+\cdots+a_mc_{n-m}\]
with some $a_1,\cdots,a_{m}\in\mathbb{Z}_p$ ($a_m\ne0$), the general term of $\{c_n\}$ can be written as
\[c_n=\sum_{i=1}^s\sum_{j=0}^{m_i-1}A_{i,j}n^j\lambda_i^n\]
where $\lambda_1,\cdots,\lambda_s$ are distinct roots of 
\[x^m-a_1x^{m-1}-\cdots-a_{m-1}x-a_m=0\]
with multiplicities $m_1,\cdots,m_s$, respectively, and $A_{i,j}$ are constants determined by $c_0,\cdots,c_{m-1}$.

If we set 
\[F_{0,i}(t)=\exp(\lambda_i t),\quad F_{k,i}(t)=t\frac{d}{dt}F_{k-1,i}(t)\]
for each $i=1,\cdots,s$ and $k\ge0$. Then, 
\[\sum_{n=0}^{\infty}c_n\frac{t^n}{n!}=\sum_{i=1}^s\sum_{j=0}^{m_i-1}A_{i,j}F_{j,i}(t).\]
Because $F_{k,i}(t)$ can be written as
\[F_{k,i}(t)=\exp(\lambda_it)P(t)\]
with some polynomial $P(t)$, we have 
\[\sum_{n=0}^{\infty}c_n\frac{t^n}{n!}=\sum_{i=1}^s\exp(\lambda_it)F_i(t)\]
with some polynomial $F_i(t)$.

Therefore, depending on the values of $\lambda_i$ and the coefficients of $F_i$, the generating function of $\{c_n\}$ may belong to $\mathcal{M}_r(\mathbb{C}_p)$ or $\mathcal{M}_{r,s}^{la}(\mathbb{C}_p)$.

Since the valuations of $\lambda_i$ can be estimated by using Newton polygon, and $A_{i,j}$ and the coefficients of $F_i$ are computable, a general formulation may be possible in principle. However, the conditions and conclusions would require a rather cumbersome formulation. We therefore omit such a formulation here.

\section{Nonexistence of Measures for Some Sequences}
\label{no_measure_section}
In this subsection, we prove that for several concrete pairs $(\{c_n\},p)$ appearing in Section~\ref{Application_section}, there is actually no $p$-adic measure $\mu$ such that
\[\int_{\mathbb{Z}_p} x^nd\mu(x)=c_n.\]

Using Proposition~\eqref{Assump_leads_la_prop} and direct sum decomposition \eqref{R_r(O_C_p)_decomposition}, specifically the uniqueness of the decomposition, it is not difficult to see the nonexistence of an $\mathcal{O}_{\mathbb{C}_p}$-valued measure. However, here, we prove the stronger statement that even a $\mathbb{C}_p$-valued measure does not exist.

\begin{proposition}
\label{Genocchi_no_measure_prop}
    Let $\{G_n\}_{n=0}^{\infty}$ be the sequence of Genocchi numbers.
    Then, there is no $2$-adic measure $\mu$ such that 
    \[\int_{\mathbb{Z}_2} x^nd\mu(x) = G_n.\]
\end{proposition}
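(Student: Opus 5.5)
Assume for contradiction that a $2$-adic measure $\mu$ (a priori $\mathbb{C}_2$-valued, hence with bounded Mahler/Amice coefficients) satisfies $\int_{\mathbb{Z}_2}x^n\,d\mu(x)=G_n$. The plan is to pass to the Amice transform and show its coefficients are unbounded. By the latter part of Lemma~\ref{measure_lem}, we have the formal identity
\[A_\mu(e^t-1)=\sum_{n=0}^{\infty}G_n\frac{t^n}{n!}=\frac{2t}{e^t+1}.\]
Substituting $t=\log(1+T)$, which is legitimate in $\mathbb{C}_2\llbracket T\rrbracket$ since $e^t-1$ and $\log(1+T)$ are mutually inverse substitutions, this forces
\[A_\mu(T)=\frac{2\log(1+T)}{2+T}=\log(1+T)\cdot\frac{1}{1+T/2}.\]
The contradiction will come from showing that the coefficients of this series have valuations that are not bounded below, because every measure has a bounded Amice transform (the correspondence recalled in Section~\ref{Comparison_with_integral_section}).

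To see the unboundedness, expand $\frac{1}{1+T/2}=\sum_{j\ge0}(-1)^j2^{-j}T^j$ and $\log(1+T)=\sum_{i\ge1}(-1)^{i-1}T^i/i$. The coefficient of $T^n$ is
\[b_n=\sum_{i=1}^{n}\frac{(-1)^{i-1}}{i}(-1)^{n-i}2^{-(n-i)}=(-1)^{n-1}2^{-n}\sum_{i=1}^{n}\frac{2^{i}}{i}.\]
The term $i=1$ contributes $2$, which has valuation $1$. Every other term $2^i/i$ has valuation $i-v_2(i)\ge 2$ for $i\ge2$, since $i-v_2(i)\ge i-\log_2 i\ge 1$ with equality only at $i=2$, where $2^2/2=2$.

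So the terms $i=1$ and $i=2$ tie. This is the step to handle carefully. Write
\[\sum_{i=1}^n\frac{2^i}{i}=2+2+\frac{8}{3}+4+\cdots=4+\sum_{i\ge3}\frac{2^i}{i},\]
and note that for $i\ge3$ we have $i-v_2(i)\ge 3-0=3$, and also $4-2=2$ at $i=4$. More cleanly, $v_2\bigl(\sum_{i=1}^n 2^i/i\bigr)$ is bounded above uniformly in $n$, say by some constant $C$. Alternatively one can avoid exact evaluation: the partial sums converge $2$-adically to $-\log(1-2)=-\log(-1)=0$. That limit being $0$ is what makes a bound subtle, so instead I would bound the valuation directly by exhibiting, for infinitely many $n$, a single dominant term. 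The cleaner route is to choose $n=2^m$. In $\sum_{i\le n}2^i/i$, the full sum to infinity is $0$, so the partial sum equals $-\sum_{i>n}2^i/i$. Its valuation is at least $\min_{i>n}(i-v_2(i))\ge n+1-\log_2(2n)$, and it is in fact equal to this minimum when a unique minimizer exists, e.g.\ at $i=n+1$ for suitable $n$. This gives $v_2(b_n)=-n+(n+1-v_2(n+1))+O(1)$, which is bounded. So this naive estimate fails, and I would instead compute $A_\mu$ more cleverly.

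Refactor as follows:
\[A_\mu(T)=\frac{2\log(1+T)}{2+T}=\frac{2\log(1+T)-2\log(-1)\cdot\ldots}{2+T},\]
and evaluate at $T=-2$. The key point is that bounded coefficients make $A_\mu$ converge on the open disc $v_2(T)>0$, and there $(2+T)A_\mu(T)=2\log(1+T)$. At $T=-2$ the left side vanishes, while $\log(1+T)=\log(-1)=0$, so there is no contradiction there either. I would therefore take instead a point $T_0$ with $v_2(T_0+2)$ large and use the identity $A_\mu(T)=2\log(1+T)/(2+T)$ analytically near $-2$ to compare with the Mahler–Amice bound $v_2(A_\mu(T))\ge -K$. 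The main obstacle is this careful growth estimate. If it fails, I would fall back on Proposition~\ref{Assump_leads_la_prop}(1), after scaling $\mu$ by a power of $2$ to make it $\mathcal{O}_{\mathbb{C}_2}$-valued: this gives $2^MF(t)\in e^{t}\mathbb{C}_2\langle t|\ge n\rangle+\mathbb{C}_2\langle t|\ge n\rangle$. I would then contradict this with the decomposition from the proof of Corollary~\ref{Genocchi_cor}, $F=e^t\frac{2t}{e^{2t}-1}-\frac{2t}{e^{2t}-1}$, using uniqueness (Theorem~\ref{A,M',R_properties_thm}(4)). This forces $2^M\cdot 2t/(e^{2t}-1)\in\mathbb{C}_2\langle t|\ge n\rangle$. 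That fails because its coefficients are $2^{M+n}B_n$, and $v_2(B_n)=-1$ for even $n$, so the valuation is $M+n-1$ against the required bound $n$.
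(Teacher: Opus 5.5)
\textbf{There is a genuine gap: your final argument does not prove the statement.} You had the correct route, which is exactly the paper's, and dropped it because of a misjudgment. The paper's route is to use Lemma~\ref{measure_lem} to get $A_\mu(T)=2\log(1+T)/(2+T)$ and then show the coefficients $b_n=(-1)^{n-1}2^{-n}\sum_{i=1}^n 2^i/i$ are unbounded below. Your own tail estimate $v_2(b_n)=1-v_2(n+1)$ (when $i=n+1$ is the unique minimizer) is bounded \emph{above}, not below.

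Take $n=2^m-1$ with $m\ge1$. Since $\sum_{i\ge1}2^i/i=-\log(-1)=0$, the partial sum equals $-\sum_{i\ge 2^m}2^i/i$. In that tail:
\begin{itemize}
\item the term $i=2^m$ has valuation $2^m-m$;
\item terms with $2^m<i<2^{m+1}$ have $v_2(i)\le m-1$, hence valuation at least $2^m-m+2$;
\item terms with $i\ge 2^{m+1}$ have valuation at least $i-\log_2 i\ge 2^{m+1}-m-1>2^m-m$.
\end{itemize}
So the minimum is attained uniquely, and
\[v_2(b_{2^m-1})=-(2^m-1)+(2^m-m)=1-m\longrightarrow-\infty .\]
For example, $b_3=5/6$. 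With this line the proof is complete, and it also supplies the justification for the unboundedness that the paper only asserts.

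The detour near $T=-2$ is a red herring. $\log(1+T)$ vanishes at $T=-2$ because $\log(-1)=0$, so that singularity is removable. The real obstruction is the growth of $\log(1+T)$ as $|T|_2\to1$, which is the same fact as the coefficient unboundedness.

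\textbf{The fallback fails.} Proposition~\ref{Assump_leads_la_prop} requires an $\mathcal{O}_{\mathbb{C}_2}$-valued measure, so you must replace $\mu$ by $2^M\mu$ for some $M\ge0$. Uniqueness of the decomposition then gives only $2^M\cdot 2t/(e^{2t}-1)\in\mathbb{C}_2\langle t|\ge n\rangle$, that is, $M+v_2(B_n)\ge0$ for all $n$. Since $v_2(B_n)\ge-1$ for every $n$, this holds for every $M\ge1$. Your comparison of $M+n-1$ with $n$ is a contradiction only when $M=0$.

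So this route rules out only $\mathcal{O}_{\mathbb{C}_2}$-valued measures, which is the weaker statement the paper explicitly sets aside just before this proposition. A putative measure here could not be $\mathcal{O}_{\mathbb{C}_2}$-valued anyway, since $v_2(b_3)=-1$. For a $\mathbb{C}_2$-valued measure the contradiction has to come from the unboundedness of the Amice coefficients, as above.
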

\begin{proof}
    The generating function of Genocchi numbers is
    \begin{align*}
        \sum_{n=0}^{\infty}G_n\frac{t^n}{n!}&=\frac{2t}{e^t+1}=\frac{\log(1+e^t-1)}{1+\frac{1}{2}(e^t-1)}\\
        &=\sum_{n=1}^{\infty}\frac{(-1)^n}{n}(e^t-1)^n\sum_{n=0}^{\infty}\left(-\frac{1}{2}\right)^n(e^t-1)^n\\
        &=\sum_{n=0}^{\infty}\left(\sum_{m=0}^{n-1}\left(-\frac{1}{2}\right)^m\frac{(-1)^{n-m-1}}{n-m}\right)(e^t-1)^n.
    \end{align*}
    Now, we have 
    \begin{equation}    
    \label{inf_v_2=-infty_equ}
    \inf_{n\in\mathbb{Z}_{\ge0}}v_2\left(\sum_{m=0}^{n-1}\left(-\frac{1}{2}\right)^m\frac{(-1)^{n-m-1}}{n-m}\right)=-\infty.
    \end{equation}

    If there exists a $2$-adic measure $\mu$ satisfying
    \[\int_{\mathbb{Z}_p}x^nd\mu(x)=G_n,\]
    by Lemma~\ref{measure_lem}, 
    \[A_{\mu}(e^t-1)=\sum_{n=0}^{\infty}G_n\frac{t^n}{n!}.\]
    If we set $A_\mu(T)=\sum_{n=0}^{\infty}b_nT^n$, this implies
    \[b_n=\sum_{m=0}^{n-1}\left(-\frac{1}{2}\right)^m\frac{(-1)^{n-m-1}}{n-m}.\]
    However, since $A_\mu$ is the Amice transform of $\mu$, the valuations $v_p(b_n)$ should be bounded below. So \eqref{inf_v_2=-infty_equ} leads a contradiction.
\end{proof}

\begin{proposition}
    Let $\{F_n\}_{n=0}^{\infty}$ be the sequence of Fibonacci numbers, and let $p\ne2$ be a prime such that $p\equiv2,3\mod{5}$.
    Then, there is no $p$-adic measure $\mu$ such that 
    \[\int_{\mathbb{Z}_p} x^nd\mu(x) = F_n.\]
\end{proposition}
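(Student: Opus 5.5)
Following the Genocchi case (Proposition~\ref{Genocchi_no_measure_prop}), I will use the latter part of Lemma~\ref{measure_lem}. Suppose a $p$-adic measure $\mu$ (even $\mathbb{C}_p$-valued) satisfies $\int_{\mathbb{Z}_p}x^nd\mu(x)=F_n$. Then
\[A_\mu(e^t-1)=\sum_{n=0}^{\infty}F_n\frac{t^n}{n!}=\frac{1}{\alpha}\Bigl(e^{\beta_1t}-e^{\beta_2t}\Bigr),\qquad \beta_{1,2}=\frac{1\pm\alpha}{2},\ \alpha^2=5.\]
The substitution $T=e^t-1$ is formally invertible, with $t=\log(1+T)$. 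So $A_\mu(T)$ is uniquely determined as a formal power series. Using the formal identity $\exp(\beta\log(1+T))=\sum_n\binom{\beta}{n}T^n$ over $\mathbb{Q}$-algebras, this gives
\[A_\mu(T)=\sum_{n=0}^{\infty}b_nT^n,\qquad b_n=\frac{1}{\alpha}\left(\binom{\beta_1}{n}-\binom{\beta_2}{n}\right).\]
Since $\mu$ is a bounded functional, $v_p(b_n)$ must be bounded below. It therefore suffices to show that $v_p(b_{p^k})\to-\infty$.

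Since $p\ne5$, $\alpha$ is a unit. From the proof of Theorem~\ref{Fibonacci_thm}, $\beta_1,\beta_2\in\mathcal{O}_2^\times\setminus\mathbb{Z}_p^\times$, and their reductions $x,x'=1-x\in\mathbb{F}_{p^2}\setminus\mathbb{F}_p$ are the roots of $X^2-X-1$, with $x^p=1-x$. Write
\[b_n\,n!\,\alpha=P_1-P_2,\qquad P_i=\prod_{j=0}^{n-1}(\beta_i-j).\]
For $n=p^k$ with $k\ge1$, reduce modulo $p$. Using $\prod_{j=0}^{p-1}(X-j)=X^p-X$ and periodicity of $j$ mod $p$, we get $P_1\equiv(x^p-x)^{p^{k-1}}=(1-2x)^{p^{k-1}}$. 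Similarly $P_2\equiv(x'^p-x')^{p^{k-1}}=(2x-1)^{p^{k-1}}$. Since $p^{k-1}$ is odd, this gives
\[P_1-P_2\equiv2(1-2x)^{p^{k-1}}\pmod{p}.\]
This is nonzero in $\overline{\mathbb{F}_p}$, because $p$ is odd and $x\ne1/2$: indeed $x=1/2$ would force $-5/4=0$, i.e.\ $p=5$. Hence $v_p(P_1-P_2)=0$.

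Consequently $v_p(b_{p^k})=-v_p(p^k!)=-(p^k-1)/(p-1)$ by Proposition~\ref{val_of_n!_prop}, which tends to $-\infty$. This contradicts boundedness of the Amice transform coefficients, so no such $\mu$ exists. The main point needing care is the mod-$p$ evaluation of the difference $P_1-P_2$: individually each $\binom{\beta_i}{n}$ is obviously large, but one must rule out cancellation between the two terms. The choice $n=p^k$ together with the Frobenius relation $x^p=1-x$ handles this.
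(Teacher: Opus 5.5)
Your proof is correct and has the same overall structure as the paper's. Both invert $T=e^t-1$ via Lemma~\ref{measure_lem}, identify $b_n=\frac{1}{\alpha}\bigl(\binom{\beta_1}{n}-\binom{\beta_2}{n}\bigr)$, and reduce to showing that $D_n=\prod_{j<n}(\beta_1-j)-\prod_{j<n}(\beta_2-j)$ is a $p$-adic unit for infinitely many $n$.

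The genuine difference is how you rule out cancellation. The paper uses the recurrence $D_{n+1}=D_n(\beta_1-n)+(\beta_1-\beta_2)\prod_{j<n}(\beta_2-j)$. Since $\beta_1-n$, $\beta_1-\beta_2=\alpha$ and $\prod_{j<n}(\beta_2-j)$ are all units, $v_p(D_n)>0$ forces $v_p(D_{n+1})=0$. So at least one of any two consecutive $D_n$ is a unit, and $v_p(n!)\to\infty$ finishes. That argument needs only that $\bar\beta_1,\bar\beta_2\notin\mathbb{F}_p$ and that they are distinct; it uses no Frobenius relation.

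Your evaluation at $n=p^k$ instead uses $\prod_{j=0}^{p-1}(X-j)=X^p-X$ together with $x^p=1-x$. By additivity of Frobenius, it really only needs $\bar\beta_1-\bar\beta_2\notin\mathbb{F}_p$. In exchange, it gives an explicit subsequence with the exact valuation $v_p(b_{p^k})=-(p^k-1)/(p-1)$, rather than mere unboundedness.
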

\begin{proof}
    Let $\alpha$ be a root of $x^2-5$ in $\mathbb{C}_p$. Then, since $p\equiv2,3\mod{5}$ and $p\ne2$, $\alpha\notin\mathbb{Q}_p$ but $\alpha\in\mathcal{O}_2$. where $\mathcal{O}_2$ is the valuation ring of the quadratic unramified extension of $\mathbb{Q}_p$.

    Let $\beta_1=(1+\alpha)/2$, $\beta_2=(1-\alpha)/2$. Then, we have
    \begin{align*}
        \sum_{n=0}^{\infty}F_n\frac{t^n}{n!}&=\frac{1}{\alpha}(e^{\beta_1t}-e^{\beta_2t})\\
        &=\frac{1}{\alpha}\{(1+e^t-1)^{\beta_1}-(1+e^t-1)^{\beta_2}\}\\
        &=\frac{1}{\alpha}\sum_{n=0}^{\infty}\left(\binom{\beta_1}{n}-\binom{\beta_2}{n}\right)(e^t-1)^n\\
        &=\frac{1}{\alpha}\sum_{n=0}^{\infty}\frac{f_n(\beta_1)-f_n(\beta_2)}{n!}(e^t-1)^n
    \end{align*}
    where $f_n(x)=x(x-1)\cdots(x-n+1)$. 

    Again by the same argument in the proof of Proposition~\ref{Genocchi_no_measure_prop}, it suffices to show that $\inf_{n}v_p((f_n(\beta_1)-f_n(\beta_2))/n!)=-\infty$.

    Let $D_n=f_n(\beta_1)-f_n(\beta_2)$. Then, we have
    $D_{n+1}=D_n(\beta_1-n)+f_n(\beta_2)(\beta_1-\beta_2)$.
    Now, $\beta_1,\beta_2\in\mathcal{O}_K\setminus\mathbb{Z}_p$ implies $v_p(\beta_1-n)=0$ and $v_p(f_n(\beta_2))=0$. Moreover, $v_p(\beta_1-\beta_2)=v_p(\alpha)=0$. Hence, we can conclude that, if $v_p(D_n)>0$, $v_p(D_{n+1})=0$. This means that $v_p(D_n)=0$ for infinitely many $n$, and thus, $\inf_nv_p(D_n/n!)=-\infty$. 
\end{proof}

Similarly, we can prove that if $p\equiv2,3\mod{5}$ and $p\ne2$, there is no $p$-adic measure $\mu$ such that
\[\int_{\mathbb{Z}_p}x^{n}d\mu(x)=L_n\]
for Lucas numbers $\{L_n\}$.

\begin{proposition}
    Let $p$ be a prime and let $d>0$ be a multiple of $p$.
    Let $\{E_n^{(d)}\}$ be the sequence of generalized Euler numbers.
    Then, there is no $p$-adic measure $\mu$ such that 
    \[\int_{\mathbb{Z}_p} x^nd\mu(x) = E_n^{(d)}.\]
\end{proposition}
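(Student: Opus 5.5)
The approach follows the proof of Proposition~\ref{Genocchi_no_measure_prop}: pass to the Amice transform and show that the only candidate for $A_\mu$ cannot have coefficients whose valuations are bounded below. Suppose such a $\mu$ exists. By Lemma~\ref{measure_lem}, $A_\mu(e^t-1)=\sum_n E_n^{(d)}t^n/n!=1/F_d(t)$, where $F_d(t)=\frac1d\sum_{k=0}^{d-1}\exp(\zeta_d^kt)$. Substituting $e^t=1+T$ (legitimate as formal series, since $T\mapsto e^t-1$ is invertible in $\mathbb{C}_p\llbracket t\rrbracket$) and using $(1+T)^{\zeta}=\sum_n\binom{\zeta}{n}T^n$ gives
\[A_\mu(T)=\frac{d}{S(T)},\qquad S(T)\coloneqq\sum_{k=0}^{d-1}(1+T)^{\zeta_d^k}=\sum_{n\ge0}s_nT^n,\quad s_n=\sum_{k=0}^{d-1}\binom{\zeta_d^k}{n}.\]
Since $\zeta_d^k\in\mathcal{O}_{\mathbb{C}_p}$, we have $\binom{\zeta_d^k}{n}\in\mathcal{O}_{\mathbb{C}_p}$ (by continuity from $\mathbb{Z}_{\ge0}$, using that the $\zeta_d^k$ lie in a finite extension). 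Hence $S(T)\in\mathcal{O}_K\llbracket T\rrbracket$ for a finite extension $K$ containing $\zeta_d$, with $S(0)=d$.

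Next compute $a\coloneqq\min_n v_p(s_n)$ and compare it with $v_p(d)=v_p(S(0))$. Expand $\binom{x}{n}=\frac1{n!}\sum_j s(n,j)x^j$ with Stirling numbers of the first kind, and use $\sum_k\zeta_d^{kj}=d$ if $d\mid j$ and $0$ otherwise. This gives
\[s_n=\frac{d}{n!}\sum_{d\mid j}s(n,j).\]
For $n=d$ only $j=0$ and $j=d$ can contribute, and $s(d,0)=0$, $s(d,d)=1$, so $s_d=d/d!$. Because $p\mid d$ we have $v_p(d!)\ge1$, hence $a\le v_p(d)-v_p(d!)<v_p(d)$. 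So $p^{-a}S(T)$ is primitive and its reduction modulo the maximal ideal has vanishing constant term. By the $p$-adic Weierstrass preparation theorem over $\mathcal{O}_K$, we can write $S(T)=p^{a}P(T)U(T)$ with $U\in\mathcal{O}_K\llbracket T\rrbracket^\times$ and $P$ a distinguished polynomial of degree $\lambda\ge1$. (If $a$ is not an integer, replace $p^a$ by a uniformizer power; the degree count is unchanged.)

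It follows that $A_\mu=d\,p^{-a}U^{-1}P^{-1}$. Since $U^{-1}$ is integral and invertible, bounded-below coefficients of $A_\mu$ would force $1/P(T)$ to have bounded-below coefficients. But $P$ has a root $\pi$ with $v_p(\pi)>0$, because it is distinguished of positive degree. Factoring $1/P=\frac{1}{T-\pi}\cdot\frac{1}{P_1(T)}$ and using $\frac1{T-\pi}=-\sum_n\pi^{-n-1}T^n$, the coefficients blow up like $\pi^{-n}$. More intrinsically, a power series with bounded coefficients converges on the open unit disc, whereas $1/P$ has a pole at $\pi$ inside it; this is a contradiction. The argument never used integrality of $\mu$, so it rules out $\mathbb{C}_p$-valued measures as well, matching the claim announced at the start of Section~\ref{no_measure_section}.

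The main obstacle is the second step: confirming that $v_p(s_d)=v_p(d)-v_p(d!)$ really is below $v_p(S(0))$, i.e.\ that no cancellation occurs in the Stirling sum. For $n=d$ this is clean because only $j=d$ survives, but I would double-check the roots-of-unity filter when $d$ has small divisors, and also the case $p=2$. If anything goes wrong there, the fallback is to compare directly the valuations of the coefficients of $T^n$ in $1/S$ by the recursion $\sum_m s_m b_{n-m}=\delta_{n0}$ at $n=d$, as in the Fibonacci case.
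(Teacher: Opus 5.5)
There is a genuine gap in your first step: the claim $\binom{\zeta_d^k}{n}\in\mathcal{O}_{\mathbb{C}_p}$ is false. Density of $\mathbb{Z}_{\ge0}$ only gives $\binom{x}{n}\in\mathbb{Z}_p$ for $x\in\mathbb{Z}_p$. When $p\mid d$, the $\zeta_d^k$ are not all in $\mathbb{Z}_p$ (unless $d=p=2$); for instance $v_p\bigl(\binom{\zeta_p}{p}\bigr)=\frac{1}{p-1}-1<0$ for odd $p$.

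Summing over $k$ does not save it. For $p=d=3$ your own formula gives $s_6=\frac{3}{6!}\bigl(s(6,3)+s(6,6)\bigr)=\frac{3(-225+1)}{720}=-\frac{14}{15}$, so $v_3(s_6)=-1$. Worse, take $d=p$ odd and $n\ge2$. Then $s_n=\frac{1}{n!}\operatorname{Tr}_{\mathbb{Q}_p(\zeta_p)/\mathbb{Q}_p}\prod_{i<n}(\zeta_p-i)$, and the product has valuation $N_n/(p-1)$ with $N_n=\#\{0\le i<n : i\equiv 1 \bmod p\}$. For $n=p(p-1)j$ the trace has valuation exactly $j$, so $v_p(s_n)\le j-(p-1)j\to-\infty$.

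Thus $\min_n v_p(s_n)=-\infty$. The series $S(T)=d\,F_d(\log(1+T))$ does not even converge on the open unit disc, no rescaling of $S$ lies in $\mathcal{O}_K\llbracket T\rrbracket$, and Weierstrass preparation cannot be applied to $S$. The point you flagged as the main obstacle, $v_p(s_d)$, is fine; what fails is the boundedness of $S$.

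The missing idea is to apply Weierstrass preparation on the side that is bounded by hypothesis, as the paper does. Since $A_\mu\in\mathbb{Q}_p\llbracket T\rrbracket$ has bounded-below coefficients, choose $M$ so that $p^MA_\mu$ is integral and primitive. Then $p^MA_\mu=P\,U$ with $P$ distinguished and $U\in\mathbb{Z}_p\llbracket T\rrbracket^\times$, so $S/d=1/A_\mu=p^MU^{-1}/P$ is meromorphic on the open unit disc.

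You then need a separate argument excluding poles there. The paper uses $D=(1+T)\frac{d}{dT}$, for which $D(1+T)^{\zeta}=\zeta(1+T)^{\zeta}$ and hence $D^dS=S$. A pole of order $k$ at some $\alpha$ with $|\alpha|_p<1$ would give $D^dS$ a pole of order $k+d$, so there are none.

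With $1/A_\mu$ analytic on the open disc, the paper finishes with the Newton polygon. Constant term $1$ and $v_p(a_d)=-v_p(d!)\le-1$ force a zero of $1/A_\mu$ in the open disc, contradicting convergence of $A_\mu$ there. Your computation $s_d=d/d!$ is exactly what that last step needs. Equivalently: having no roots forces $\deg P=0$, so $S/d=p^MU^{-1}$ with $p^M=U(0)$ a unit. That would put $S/d$ in $\mathbb{Z}_p\llbracket T\rrbracket$, contradicting $s_d/d=1/d!$.
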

\begin{proof}
Let $G(t)$ be the generating function of $\{E_n^{(d)}\}$, that is, let
\[G(t)=\left(\sum_{k=0}^{d-1}\exp(\zeta_d^kt)\right)^{-1}=\sum_{n=0}^{\infty}E_n^{(d)}\frac{t^{n}}{n!}\]
where $\zeta_d$ is a primitive $d$-th root of unity. 
Let $F(t)=1/G(t)$.

Assume there exists a measure $\mu$ satisfying
\[\int_{\mathbb{Z}_p} x^nd\mu(x) = E_n^{(d)}.\]
Let the Amice transform of $\mu$ be $A_\mu(T)=B(T)=\sum_{n=0}^{\infty}b_nT^n$. Then, by Lemma~\ref{measure_lem},
\[\inf_{n\ge0}v_p(b_n)>-\infty\quad\text{ and }\quad B(e^t-1)=G(t).\]
Let $A(T)=\sum_{n=0}^{\infty}a_nT^n=1/B(T)$.

Then, we have
\begin{equation}
\label{A(T)=F(log(1+T))_equ}
    A(T)=F(\log(1+T))=\sum_{k=0}^{d-1}(1+T)^{\zeta_d^k}=\sum_{n=0}^{\infty}\sum_{k=0}^{d-1}\binom{\zeta_d^k}{n}T^n
\end{equation}
as a formal power series.

Since $B(T)=G(\log(1+T))$ and $\{E_n^{(d)}\}\subset\mathbb{Z}$, we have $B(T)\in\mathbb{Q}_p\llbracket T\rrbracket$.
Because $\inf v_p(b_n)>-\infty$, there exists $M\in\mathbb{Z}$ such that $p^MB(T)\in\mathbb{Z}_p\llbracket T\rrbracket$.
By the Weierstrass Preparation Theorem, there exists a polynomial $P(T)\in\mathbb{Z}_p[T]$ and a formal power series $U(T)\in\mathbb{Z}_p\llbracket T\rrbracket^\times$ such that 
\[p^MB(T)=P(T)U(T),\]
and thus, 
\[A(T)=\frac{p^MU(T)^{-1}}{P(T)}.\]

Let $D(0,1)^-=\{x\in\mathbb{C}_p\mid|x|_p<1\}$. We show that there is no root of $P(T)$ in $D(0,1)^-$. Assume that $P(T)$ has a root $\alpha$ with $|\alpha|_p<1$. Since $U(T)^{-1}\in\mathbb{Z}_p\llbracket T\rrbracket$ is analytic on $D(0,1)$, we can expand $A(T)$ as
\[A(T)=\sum_{n=-k}^{\infty}c_n(T-\alpha)^k\quad(k>0,\text{ }c_n\in\mathbb{Q}_p,\text{ }c_k\ne0)\]
near $\alpha$. Let $D=(1+T)d/dT$ be a differential operator. Then, we have $D^dA(T)=A(T)$ by equation~\eqref{A(T)=F(log(1+T))_equ}. However, since
\[D\left(\frac{c_{-k}}{(T-\alpha)^k}\right)=\frac{kc_{-k}(1+\alpha)}{(T-\alpha)^{k+1}}+\frac{kc_{-k}}{(T-\alpha)^k},\]
The order of the pole at $\alpha$ differs between the two sides of $D^dA(T)=A(T)$, which is a contradiction.
Therefore, $A(T)=p^MU(T)^{-1}/P(T)$ is analytic on $D(0,1)^-$.

On the other hand, since 
\[A(T)=\sum_{n=0}^{\infty}a_nT^n=F(\log(1+T))\quad\text{and}\quad F(t)=\sum_{n=0}^{\infty}\frac{t^{dn}}{(dn)!},\]
we have $a_0=1$, $a_1=\cdots,a_{d-1}=0$, and $a_d=1/d!$. In particular, we have $v_p(a_d)\le-1$ because $p\mid d$. Therefore, by the theory of Newton polygons, $A(T)$ has zeros on $D(0,1)^-$. (For example, see Lemma~7 in Chapter~IV of \cite{Koblitz}.) However, since $B(T)=1/A(T)$ converges on $D(0,1)^-$ by the assumption $\inf_{n\ge0}v_p(b_n)>-\infty$, this is a contradiction.
\end{proof}

\end{document}